\newcommand{\lfis}{{\bf LFI}s}
\newcommand{\lfi}{{\bf LFI}}
\newcommand{\rmbc}{{\bf RmbC}}
\newcommand{\mbc}{{\bf mbC}}
\newcommand{\rmbcciw}{{\bf RmbCciw}}
\newcommand{\qmbc}{{\bf QmbC}}
\newcommand{\rqmbc}{{\bf RQmbC}}
\newcommand{\cplp}{\text{\bf CPL$^+$}}
\newcommand{\cpl}{\text{\bf CPL}}
\newcommand{\sent}{$Sen(\Omega)$}
\newcommand{\tert}{$Ter(\Omega)$}
\newcommand{\fort}{$For_1(\Omega)$}
\newcommand{\afor}{$AFor(\Omega)$}
\newcommand{\ctert}{$CTer(\Omega)$}
\newcommand{\mbcciw}{{\bf mbCciw}}
\newcommand{\bc}{{\bf bC}}
\newcommand{\rbc}{{\bf RbC}}
\newcommand{\ci}{{\bf Ci}}
\newcommand{\rci}{{\bf RCi}}
\newcommand{\cila}{{\bf Cila}}
\newcommand{\rcila}{{\bf RCila}}
\newcommand{\lfium}{{\bf LFI1}}
\newcommand{\dacdot}{{\bf J3}}
\newcommand{\set}{{\bf P1}}
\newcommand{\lptz}{{\bf LPT0}}
\newcommand{\A}{\ensuremath{\mathcal{A}}}
\newcommand{\B}{\ensuremath{\mathcal{B}}}
\newcommand{\matM}{\ensuremath{\mathcal{M}}}
\newcommand{\matF}{\ensuremath{\mathcal{F}}}
\newcommand{\axwci}{{\bf ciw}}
\newcommand{\axci}{{\bf ci}}
\newcommand{\axcl}{{\bf cl}}
\newcommand{\axca}{{\bf ca}}
\newcommand{\MP}{\textbf{MP}}
\newcommand{\Rn}{\ensuremath{\textbf{R}_\neg}}
\newcommand{\Rb}{\ensuremath{\textbf{R}_\circ}}
\newcommand{\RE}{\ensuremath{\textbf{R}_\square}}
\newcommand{\termvalue}[1] {\lbrack\!\lbrack #1 \rbrack\!\rbrack}
\newcommand{\kax}{\textbf{Ax1}}
\newcommand{\axTrans}{\textbf{Ax2}}
\newcommand{\axed}{\textbf{Ax3}}
\newcommand{\axeea}{\textbf{Ax4}}
\newcommand{\axeeb}{\textbf{Ax5}}
\newcommand{\axouda}{\textbf{Ax6}}
\newcommand{\axoudb}{\textbf{Ax7}}
\newcommand{\axoue}{\textbf{Ax8}}
\newcommand{\axouimp}{\textbf{Ax9}}
\newcommand{\axtnd}{\textbf{Ax10}}
\newcommand{\axexp}{\textbf{bc1}}
\newcommand{\axmod}{\textbf{AxMod}}
\newcommand{\axpem}{\textbf{PEM}}
\newcommand{\axexpl}{\textbf{exp}}
\newcommand{\axce}{{\bf ce}}
\newcommand{\axcf}{{\bf cf}}
\newcommand{\defin}{\ensuremath{~\stackrel{\text{{\tiny def }}}{=}~}}
\newcommand{\imp}{\rightarrow}
\newcommand{\sse}{\leftrightarrow}
\newcommand{\cons}{\ensuremath{{\circ}}}
\newcommand{\wneg}{\ensuremath{\lnot}}
\newcommand{\sneg}{\ensuremath{{\sim}}}
\newtheorem{thm}{\hspace*{10pt}{\sc Theorem}\rm}[section]
\newtheorem{example}[thm]{\hspace*{10pt}{\sc Example}\rm}
\newtheorem{lem}[thm]{\hspace*{10pt}{\sc Lemma}\rm}
\newtheorem{definition}[thm]{\hspace*{10pt}{\sc Definition}\rm}
\newtheorem{cor}[thm]{\hspace*{10pt}{\sc Corollary}\rm}
\newtheorem{prop}[thm]{\hspace*{10pt}{\sc Proposition}\rm}
\newtheorem{remark}[thm]{\hspace*{10pt}{\sc Remark}\rm}
\newenvironment{proof}[1][Proof]{\noindent\textit{#1. } }{\hfill$\square$\vspace{6pt}}
\newenvironment{defn}{\begin{definition}}{\end{definition}}
\newenvironment{teom}{\begin{thm}}{\end{thm}}
\newenvironment{lemma}{\begin{lem}}{\end{lem}}
\newenvironment{coro}{\begin{cor}}{\end{cor}}
\newenvironment{rem}{\begin{remark}}{\end{remark}}
\newenvironment{rems}{\begin{remark}}{\end{remark}}
\newenvironment{exem}{\begin{example}}{\end{example}}
\title{Logics of Formal Inconsistency enriched with replacement: \\
	an algebraic and modal account\footnote{This article has been accepted for publication and will appear in a revised form in The Review of Symbolic Logic, published by Cambridge University Press on behalf of the Association for Symbolic Logic. \\ \copyright Association for Symbolic Logic, 2021}}
\author{Walter Carnielli$^1$, Marcelo E. Coniglio$^{1,2}$ and
David Fuenmayor$^3$\\
\small $^1$Centre for Logic, Epistemology and the History of Science - CLE\\
\small University of Campinas, Brazil\\
\small $^2$Institute of Philosophy and the Humanities - IFCH\\
\small University of Campinas, Brazil\\
\small $^3$Department of Computer Science\\ University of Luxembourg, Luxembourg\\
\small Email: {\tt$\{$walterac, coniglio$\}$@unicamp.br}  \ and \ {\tt david.fuenmayor@uni.lu}
\date{}
}
\begin{document}

\maketitle

\begin{abstract}
It is customary to expect   from a  logical system  that it  can be  \emph{algebraizable}, in the sense that an algebraic companion of the deductive machinery can always be  found.
Since the inception of da Costa's paraconsistent calculi,   algebraic   equivalents  for such systems  have been  sought. It is known, however, that these systems are not self-extensional (i.e., they do not satisfy the {\em replacement property}). More than this, they are  not algebraizable in the sense of Blok-Pigozzi. The same negative results hold for several systems of the hierarchy of paraconsistent logics known as {\em Logics of Formal Inconsistency} (\lfis). Because of this, several systems belonging to this class of logics are only characterizable by semantics of a non-deterministic nature. This paper offers a solution for two open problems
in the domain of paraconsistency, in particular connected to algebraization of \lfis, by extending with rules several \lfis\ weaker than $C_1$, thus obtaining the replacement property (that is, such \lfis\ turn out to be self-extensional). Moreover, these logics become algebraizable in the standard Lindenbaum-Tarski's sense by a suitable variety of Boolean algebras extended with additional operations. The weakest \lfi\ satisfying replacement presented here is called \rmbc, which is obtained from the basic \lfi\ called \mbc. Some axiomatic extensions of \rmbc\ are also studied. In addition, a neighborhood semantics is defined for such systems. It is shown that \rmbc\ can be defined within the minimal bimodal non-normal logic ${\bf E} {\oplus} {\bf E}$  defined by the fusion
% (or, equivalently, by the constrained fibring  by
%sharing the classical connectives) 
of  the non-normal modal logic  {\bf E} with itself. Finally, the framework is extended to first-order languages. \rqmbc, the  quantified extension of \rmbc, is shown to be
sound and  complete w.r.t.~the proposed algebraic semantics.
\end{abstract}

\newpage

\tableofcontents

\

\

\

\section{Introduction: The quest for the  algebraic counterpart of paraconsistency} \label{intro}

It is customary to expect   from a  logical system  that it  can be  \emph{algebraizable}, in the sense that an algebraic companion of the deductive machinery can always be  found. When this happens, several  logical problems can be faithfully and conservatively translated into some given  algebra, and  then algebraic tools can be used to tackle them.  This happens so naturally with the brotherhood between classical  logic and Boolean  algebra, that  a similar relationship  is expected to hold for   non-standard logics as well. And indeed, this  holds for some, but not for all logics.  In any case,  the task of finding such an algebraic counterpart is far from trivial. The intuitive idea  behind the search for  algebraization for a  given logic system, generalizing the  pioneering proposal of Lindenbaum and Tarski, usually starts by trying to find   a congruence on the set of formulas that could be used to produce a quotient algebra, defined over  the algebra of formulas of the logic.
 
One of the favorite methods to set up congruences   is to check the validity of a fundamental  property called \emph{replacement} or (IpE)  (acronym for \emph{intersubstitutivity  by provable equivalents}. Intuitively, (IpE) states that if $\alpha$ and $\beta$ are logically equivalent, then the replacement  in a formula $\gamma$ of any ocurrences of $\alpha$ by $\beta$ produces a formula logically equivalent to $\gamma$.  A logic enjoying replacement is usually called {\em self-extensional}.  

Finding an algebraization for the  logics of the  hierarchy  $C_n$ of paraconsistent logics,  introduced in~\cite{dC63}, constitutes a paradigmatically difficult case. Recall that  a logic with a negation $\neg$ is paraconsistent if it  can incorporate contradictions (w.r.t. $\neg$) that  do not trivialize the system (different from what happens, for instance, with the classical or intuitionistic negation). The idea of da Costa's systems is to define, for each $C_n$, an unary connective $\circ_n$ (where $\circ_n \alpha$ means that $\alpha$ is `well-behaved', or `classically behaved' in  $C_n$) in the following sense: in spite of there existing formulas $\alpha$ and $\beta$ such that  $\beta$ does not follow from the contradictory set  $\{\alpha,\neg\alpha\}$, $\beta$ is always derivable in $C_n$ from $\{\alpha,\neg\alpha,\circ_n\alpha\}$, for every $\alpha$ and $\beta$ (thus, the {\em explosion principle} of negation is only  guaranteed for contradictions involving well-behaved formulas). For instance,  well-behavedness  is defined by $\circ_1\alpha \defin \neg(\alpha \land \neg\alpha)$ in $C_1$, the first logic of da Costa's family.  The approach to paraconsistency of da Costa was afterwards generalized in~\cite{CM} through the notion of {\em Logics of Formal Inconsistency} (\lfis), in which the operator of well-behavedness (rebaptized as {\em consistency operator}, and denoted by $\circ$) is allowed to be  a primitive one.
  
It is known since a long time that  (IpE) does not hold  for  $C_1$ (see~\citet[Corollary to Theorem~1]{dac:guil:64}; a proof for the calculi $C_n$ can be found in~\citet[Theorem~1]{urbas:89}). As a consequence of this failure, a  direct  algebraization (in the sense of  Lindenbaum-Tarski) for this logic is not possible. This blocks  the way for  other, weaker calculi of the  hierarchy $C_n$, since when one logic is algebraizable, so are its extensions. But there are  further possibilities for  algebraization, and the search continued until a proof was presented in~\cite{mort:1980}, establishing  that no  non-trivial quotient algebra is definable for  $C_1$,  or for any logic weaker than  $C_1$. This entails that $C_1$ (together with  its subsystems) is not algebraizable in the general sense of~\cite{blok:pig:89} (a simpler proof of this fact was obtained in~\cite{lew.mik:schw:1991}).  This result was generalized in \citet[Theorem~3.83]{CM} to \cila, the presentation of  $C_1$ in the language of the \lfis\ (where the consistency conective $\circ$ is primitive instead of being defined in terms of the other connectives). We obtain as a consequence that no such algebraization is possible for any other of the \lfis\ weaker than \cila\ to be discussed in Section \ref{Sexte-mbc} (i.e., the systems \mbc, \mbcciw, \bc\ and \ci\ studied in \cite{CM}, \cite{CCM}, and \cite{CC16}). The same reasoning applies to every calculus $C_n$ in the infinite da Costa's hierarchy, since they are weaker than $C_1$.

Some extensions  of $C_1$  having non-trivial quotient algebras have been proposed in the literature. In~\cite{mort:1989}, for instance, it is proposed an infinite number of  intermediate logics  between $C_1$ and classical logic    called $C_{n/(n+1)}$, for $n \geq 1$. In each $C_{n/(n+1)}$ it is possible to define a  non-trivial logical congruence $\sim_n$ by means of a formula $\delta_n(\alpha,\beta)$ such that $\alpha \sim_n \beta$ iff $\vdash_{C_{n/(n+1)}} \delta_n(\alpha,\beta)$ \citep[see][Theorem~4.4]{mort:1989}. However, this feature is strictly weaker than being self-extensional.

Some other  types of algebraic-like counterparts have been investigated, for instance, in \cite{car:alcant:1984} and  \cite{seoa:alcant:1991} a class of structures for the logic  $C_1$  (called {\em da Costa algebras})  was defined,   permitting   a Stone-like representation theorem. In this way,  every da Costa algebra is isomorphic to a paraconsistent algebra of sets, making   $C_1$  closer to traditional  mathematical objects. On the other hand, in~\cite{CT20}  a semantical characterization (which constitutes a decision procedure) is given for  the logics $C_n$ in terms of {\em restricted non-deterministic matrices}.
These structures are non-deterministic matrices (that is, logical matrices
in which the connectives can take several values instead of a single one; cf.~\citet{avr:05}) such that the valuations must satisfy some (decidable) conditions.

To the best of our knowledge, the search for da Costa-like paraconsistent systems satisfying  (IpE) started in the studies by \cite{urbas} and~\cite{sylvan}. The former, a PhD thesis supervised by Sylvan, analyzes (IpE) for some subsystems of the hierarchy of  $J$-systems introduced in~\cite{arruda:dac}. The latter proposes an extension of da Costa's $C_\omega$ by adding the inference rule of contraposition, which guarantees replacement. 
In the realm of \lfis, the situation is more complicated: it can be proved that, for  some subclasses of \lfis,   (IpE) is  unattainable (while preserving paraconsistency), as shown in \citet[Theorem 3.51]{CM} with respect to some axiomatic extensions of the logic \ci, one of the  central systems of the family of \lfis\ which is much  weaker than \cila.

Some  interesting results concerning three-valued self-extensional paraconsistent  logics were obtained in the literature,  in connection with the limitative result mentioned above \citep[namely,][Theorem 3.51]{CM}. In~\cite{avr:bez:17} it was shown that no three-valued paraconsistent logic having an implication can be self-extensional. On the other hand, in~\cite{avr:17} it was shown that there is exactly one self-extensional three-valued paraconsistent logic defined in a signature having conjunction, disjunction and negation. For paraconsistent logics in general, it was shown in~\cite{bez:1998} that no paraconsistent negation $\neg$ satisfying the law of double negation and such that the schema $\neg(\varphi \land \neg\varphi)$ is valid can satisfy (IpE). 
 
Nevertheless, a problem has remained open: to obtain (IpE) for extensions of \ci\  by the addition of weaker forms of contraposition deduction rules while preserving their paraconsistent character, as  discussed in \citet[Subsection 3.7]{CM}. 
The challenge was to find  extensions of \bc\ and \ci\  which  would satisfy (IpE) and still keep their paraconsistent   character. This  paper  meets this challenge in a minimal way: we define the logic \rmbc, an extension by rules of \mbc,  and two 
suitable  extensions of \rmbc, the logics \rbc\ and \rci\ (respectively, extensions of  \bc\ and \ci)  that solve the open problem. Details and exemplary model structures are presented in Examples~\ref{exem-bC} and~\ref{exem-Ci} in Section~\ref{Sexte-mbc}.\footnote{\label{fn:isabelle}We acknowledge the help of model finder \textit{Nitpick} \citep{Nitpick}, integrated in the proof assistant \textit{Isabelle} \citep{nipkow2002isabelle}, in the task of generating suitable models for theories. By finding counter-models, \textit{Nitpick} also spared us the Sisyphean task of trying to prove non-theorems. In order to encode {\rmbc} and its extensions in the higher-order logic of \textit{Isabelle/HOL} we employed the \textit{shallow semantical embedding} approach by \citet{benzmueller2019}. Formalization sources for selected parts of this work are available at: \url{https://github.com/davfuenmayor/LFIs}. See also \citet{Topological_Semantics-AFP} for a recent \textit{Isabelle/HOL} encoding of a generalized topological semantics for paraconsistent and paracomplete logics, which, in a sense, generalizes the neighbourhood semantics presented in Section~\ref{sec:nbhd}.}

The (minimal) logic {\rmbc} and its axiomatic extensions thus form a parallel family of {\lfis} featuring self-extensionality.
We introduce for them, in Section~\ref{sect-rmbc}, a new  kind of  semantic structures: the Boolean  algebras with \lfi\ operators, or BALFIs. Moreover, \rmbc\ is proved to be sound  and complete w.r.t. the class of BALFIs. In Section~\ref{Sexte-mbc} we consider axiomatic extensions of {\rmbc} and give solutions to the open problems mentioned above. Section~\ref{limits} studies some limits for self-extensionality for extensions of {\rmbc} under the condition of paraconsistency.

The paper also  explores some other  directions. Section~\ref{sec:nbhd}  proposes a neighborhood semantics for  \rmbc\ as a special class of BALFIs defined  on powerset Boolean algebras.  Again, \rmbc\ is proved to be sound  and complete w.r.t.~these neighborhood models. Moreover, in Section~\ref{non-normal} it is proved that \rmbc\ can be defined within the minimal bimodal non-normal modal logic \textbf{E}. Such a neighborhood semantics is also  proposed  for axiomatic extensions of \rmbc\  in Section~\ref{kripke-extens}.

The notion of logical consequence used in BALFI semantics for \rmbc, as well as in its neighborhood semantics (as defined  in Section~\ref{sec:nbhd}), is \textit{local} instead of \textit{global} (or \textit{degree-preserving} instead of \textit{truth-preserving} in the sense of~\cite{DegTru}). This requires adapting the usual definition of derivation from premises in a Hilbert calculus. %(cf. Definition~\ref{deriv-RmbC}).
In fact, we can, alternatively, consider a global (or truth-preserving) semantics, as it is usually done with algebraic semantics. This will leads us, in Section~\ref{truth-pres}, to  the logic $\rmbc^*$, which is defined by the same Hilbert calculus as for \rmbc, but where  derivations  from premises are defined as in the usual Hilbert calculi.
 
Section~\ref{extension-FOL} is dedicated to extending \rmbc\  to  first-order languages, thus defining the logic \rqmbc, which is proved, in Section~\ref{balfifol} and  Section~\ref{complete}, to be complete w.r.t. BALFI semantics. 
%The proof is an adaptation to the completeness proof for  \qmbc\ w.r.t. swap structures semantics given in~\cite{CFG19}, and since  BALFIs  are ordinary algebras, the new completeness proof  offers a   great simplification when compared to  previous completeness results based on  non-de\-ter\-min\-is\-tic swap structures.

%%%%%%%%%%%%%%%%%%%%%%%%%%%%
\section{The logic \rmbc} \label{sect-rmbc}

The class of paraconsistent logics known as {\em Logics of Formal Inconsistency} (\lfis, for short) was introduced by \cite{CM}. In their simplest form, they feature a `non-explosive' negation $\neg$, as well as  a (primitive or derived) {\em consistency connective} $\circ$ which allows us to recover the principle of explosion (\textit{ex contradictione sequitur quodlibet}) in a controlled way. 

\begin{defn} \label{defLFI}
Let  ${\bf L}=\langle  \Theta,\vdash \rangle$ be a Tarskian, finitary and structural logic defined over a propositional signature $\Theta$, which contains a
negation $\neg$, and let  $\circ$ be a (primitive or derived) unary connective. Then,
${\bf L}$ is said to be  a {\em Logic of Formal Inconsistency} (\lfi) with respect to $\neg$ and $\circ$ if the following holds:

$\begin{array}{ll}
(i) & \varphi,\neg\varphi\nvdash\psi \ \mbox{ for some $\varphi$ and $\psi$};\\
(ii) & {\circ}\varphi,\varphi,\neg\varphi\vdash\psi \ \mbox{  for every $\varphi$ and $\psi$; } \\
(iii) & \mbox{there are two formulas $\alpha$ and $\beta$ such that}\\
& \hspace{4mm} (iii.a) \ {\circ}\alpha,\alpha \nvdash \beta;\\
& \hspace{4mm} (iii.b) \ {\circ}\alpha, \neg \alpha \nvdash \beta.

\end{array}$
\end{defn}

Condition (i) above signals the non-validity of the classical \textit{principle of explosion}. Condition (ii), also called \textit{principle of gentle explosion}, characterizes {\lfis} in particular. Condition (iii) is required in order to satisfy condition~(ii) in  a non-trivial way. The hierarchy of \lfis\ studied in~\cite{CCM} and~\cite{CC16} starts from a logic called \mbc, which extends  positive classical logic \cplp\ by adding a negation $\neg$ and a primitive {\em consistency} operator $\circ$ satisfying minimal requirements in order to define an \lfi. 

\begin{defn} \label{signa}
In what follows, the following signatures will be considered:
\begin{enumerate}
\item[] $\Sigma_+=\{\land, \lor, \to\}$;
\item[] $\Sigma_{BA}=\{\land, \lor, \to,\bar 0,\bar 1\}$;
\item[] $\Sigma=\{\land, \lor, \to, \wneg, \cons\}$;
\item[] $\Sigma_C=\{\land, \lor, \to,\neg\}$;
\item[] $\Sigma_{C_0}=\{\land, \lor, \to,\neg,\bar 0\}$;
\item[] $\Sigma_{C_e}=\{\land, \lor, \to, \wneg,\bar 0,\bar 1\}$;
\item[] $\Sigma_e=\{\land, \lor, \to, \wneg, \cons,\bar 0,\bar 1\}$; 
\item[] $\Sigma_m=\{\land, \lor, \to, \sneg, \square,\lozenge\}$; and
\item[] $\Sigma_{bm}=\{\land, \lor, \to, \sneg, \square_1,\lozenge_1,\square_2,\lozenge_2\}$.
\end{enumerate}
\end{defn}

If $\Theta$ is a propositional signature, then $For(\Theta)$ will denote the (absolutely free) algebra of formulas over $\Theta$ generated by a given denumerable set $\mathcal{V}=\{p_n \ : \  n \in \mathbb{N}\}$ of propositional variables. Throughout this paper, logic systems will be presented by means of Hilbert calculi (in some cases with two possible definitions of derivations from premises: local and global), and a semantics will be given for them by means of algebraic structures, proving the soundness and completeness of each logic w.r.t. its semantics. For simplicity, any logic will be identified with its Hilbert calculus presentation (w.r.t.~a given notion of derivation in that calculus).

\begin{defn} [Classical Positive Logic] \label{hilCPLP} The {\em classical  positive logic} \cplp\ is defined over the language $For(\Sigma_+)$ by the following Hilbert  calculus:\footnote{It is hard to pinpoint in the history of logic  who first proposed an axiomatization for the full  positive fragment of classical	propositional calculus.  H. B. Curry in  a  book   written   in 1959-1961 for a course (published as~\cite{curry}) deals  in his chapter~5 with   the   positive fragment   of  the  propositional  calculus (i.e.,  with  conjunction, disjunction  and implication, but  without negation). Before that,  \cite{luka}  and~\cite{henkin}, among other authors, studied proper fragments  of positive logic.
	}  \\[2mm]
  {\bf Axiom schemas:}
  \begin{gather}
    \alpha \imp \big(\beta \imp \alpha\big)             \tag{\kax} \\
   \Big(\alpha\imp\big(\beta\imp\gamma\big)\Big) \imp
            \Big(\big(\alpha\imp\beta\big)\imp\big(\alpha\imp\gamma\big)\Big)
              \tag{\axTrans}\\
    \alpha \imp \Big(\beta \imp \big(\alpha \land \beta\big)
                                             \Big)  \tag{\axed}\\
    \big(\alpha \land \beta\big) \imp \alpha         \tag{\axeea}\\
    \big(\alpha \land \beta\big) \imp \beta          \tag{\axeeb}\\
    \alpha \imp \big(\alpha \lor \beta\big)          \tag{\axouda}\\
    \beta \imp \big(\alpha \lor \beta\big)           \tag{\axoudb}\\
    \Big(\alpha \imp \gamma\Big) \imp \Big(
        (\beta \imp \gamma) \imp
           \big(
          (\alpha \lor \beta) \imp \gamma
           \big)\Big)                               \tag{\axoue} \\
    \big(\alpha \imp \beta\big) \lor \alpha          \tag{\axouimp}
  \end{gather}

{\bf Inference rule:}
\[\frac{\alpha \ \ \ \ \alpha\imp
      \beta}{\beta}  \tag{\MP}\]
\end{defn}
 
\

\begin{defn} The logic \mbc, defined over signature $\Sigma$, is obtained from  \cplp\ by adding the following axiom schemas:
  \begin{gather}
    \alpha \lor \lnot \alpha                        \tag{\axtnd}\\
    \cons \alpha \imp \Big(\alpha \imp \big(\lnot \alpha \imp \beta\big)\Big)
                                                     \tag{\axexp}
  \end{gather}
\end{defn}

The logic \mbc\ is an \lfi. Indeed, it is the minimal \lfi\ extending \cplp.

Take into consideration the {\em replacement property}, namely: If $\alpha \sse \beta$ is a theorem then $\gamma[p/\alpha] \sse \gamma[p/\beta]$ is a theorem, for every formula $\gamma(p)$.\footnote{As usual, $\alpha \sse \beta$ is an abbreviation of the formula $(\alpha \to \beta) \land(\beta \to \alpha)$, and $\gamma[p/\alpha]$ denotes the formula obtained from $\gamma$ by replacing every occurrence of the variable $p$ by the formula $\alpha$.}
It is well known that \mbc\ does not satisfy the replacement property  in general. However, it is easy to see that replacement holds in \mbc\ for every formula $\gamma(p)$ over the signature $\Sigma_+$ of \cplp. We  then  introduce  the logic \rmbc\ which extends \mbc\ by adding replacement for every formula over $\Sigma$. From the previous observation, it is enough to add replacement for $\neg$ and $\cons$ as new inference rules. Namely: if $\alpha \sse \beta$ is a theorem then  $\neg\alpha \sse \neg\beta$ (is a theorem), and  if $\alpha \sse \beta$ is a theorem then  $\cons\alpha \sse \cons\beta$ (is a theorem).

Observe, however, that replacement is in fact a metaproperty (since it states that some formula is a theorem from previous formulas which are assumed to be theorems). It is clear that the two inference rules proposed above for inducing replacement are global instead of local (see Section~\ref{truth-pres} below): in order to apply each rule, the corresponding premise must be a theorem. This is an analogous situation to the {\em necessitation rule}   in modal logics. Assuming inference rules of this kind   requires changing the definition of derivation from premises in the resulting Hilbert calculus, as we shall see below.

\begin{defn} \label{rules} The logic \rmbc, defined over signature $\Sigma$, is obtained from  \mbc\ by adding the following inference rules:
\[\frac{\alpha \sse \beta}{\neg\alpha \sse \neg\beta}  \hspace{1cm}(\Rn) \hspace{4cm}\frac{\alpha \sse \beta}{\cons\alpha \sse \cons\beta}  \hspace{1cm}(\Rb)\]
\end{defn}

\begin{defn} [Derivations in \rmbc] \label{deriv-RmbC} \ \\
(1) A {\em derivation} of a formula
$\varphi$ in \rmbc\ is a finite sequence of formulas
$\varphi_1\ldots \varphi_n$ such that $\varphi_n$ is $\varphi$ and, for every
$1 \leq i \leq n$, either  $\varphi_i$ is an instance of an axiom schema of \rmbc, or  $\varphi_i$ is the consequence of some inference rule of \rmbc\  whose premises appear in the sequence $\varphi_1\ldots \varphi_{i-1}$.\\
(2) We say that a formula $\varphi$ is {\em derivable} in \rmbc, or that $\varphi$ is a {\em theorem of \rmbc}, denoted by $\vdash_{\rmbc}\varphi$, if there exists a derivation of $\varphi$ in \rmbc.\\
(3)  Let $\Gamma \cup\{\varphi\}$ be a set of formulas over $\Sigma$. We say that {\em $\varphi$ is derivable in \rmbc\ from the premises $\Gamma$}, and we write $\Gamma\vdash_{\rmbc}\varphi$, if either  $\varphi$ is derivable in \rmbc, or there exists a finite, non-empty subset $\{\gamma_1,\ldots,\gamma_n\}$ of $\Gamma$ such that the formula $(\gamma_1 \wedge(\gamma_2 \wedge(\ldots\wedge(\gamma_{n-1} \wedge \gamma_n)\ldots))) \to \varphi$ is derivable in \rmbc.
\end{defn}

\begin{rems} \label{wojci} \ \\
(1) The order in which we take the conjunction of elements in the set $\{\gamma_1,\ldots,\gamma_n\}$ in item (3) above is immaterial because of the replacement property. In the sequel, we will write $(\gamma_1 \wedge \ldots \wedge \gamma_n)$ instead of $(\gamma_1 \wedge(\gamma_2 \wedge(\ldots\wedge(\gamma_{n-1} \wedge \gamma_n)\ldots)))$.\\
(2) From the previous definition, it follows that $\emptyset \vdash_{\rmbc}\varphi$ \ iff \ $\vdash_{\rmbc}\varphi$.\\
(3) Recall that a consequence relation $\vdash$ is said to be Tarskian and finitary if it satisfies the following properties: (i)~$\Gamma \vdash \alpha$ whenever $\alpha \in \Gamma$; (ii)~if $\Gamma \vdash \alpha$ and $\Gamma \subseteq \Delta$ then $\Delta \vdash \alpha$; (iii)~if $\Gamma \vdash \Delta$ and $\Delta \vdash \alpha$ then $\Gamma \vdash \alpha$, where $\Gamma \vdash \Delta$ means that $\Gamma \vdash \delta$ for every $\delta \in \Delta$; and (iv)~$\Gamma \vdash \alpha$ implies that $\Gamma_0 \vdash \alpha$ for some finite $\Gamma_0$ contained in $\Gamma$. It is easily seen that the consequence relation $\vdash_{\rmbc}$ given in Definition~\ref{deriv-RmbC}(2) is Tarskian and finitary, by virtue of the properties of $\to$ and $\wedge$ in \rmbc\ inherited from  \cplp.
\end{rems}

\begin{teom} \label{IPE-rmbc} Let {\bf L} be the logic \rmbc\ or any axiomatic extension of it over the signature $\Sigma$, in which the derivation from premises is defined as in \rmbc. Then, {\bf L} satisfies the replacement property.
\end{teom}
\begin{proof}
%Define the following relation on $For(\Sigma)$: $\alpha \equiv \beta$ iff $\vdash_{\rmbc} \alpha \sse \beta$. It is clearly an equivalence relation, by the properties of \cplp. 
Using the properties of \cplp, we have that 
$$\big((\alpha \sse \alpha') \land (\beta \sse \beta')\big) \to \big((\alpha \,\#\, \beta) \sse (\alpha' \,\#\, \beta')\big)$$ 
is a theorem of {\bf L}, for $\# \in \{\land, \vee, \to\}$. From this, $(\alpha \,\#\, \beta) \sse (\alpha' \,\#\, \beta')$ is a theorem of {\bf L} provided that $(\alpha \sse \alpha')$ and $(\beta \sse \beta')$ are theorems of {\bf L}. On the other hand, the rules $(\Rn)$ and $(\Rb)$ guarantee that $(\#\,\alpha \sse \#\,\alpha')$ is a theorem of {\bf L} provided that $(\alpha \sse \alpha')$ is a theorem of {\bf L}, for $\# \in \{\neg, \cons\}$. From this, it is straightforward to prove, by induction on the complexity of the formula $\gamma(p)$,\footnote{By convenience, and as is usually done in \lfis\ (see, e.g., \cite{CC16}), the complexity of $\cons\gamma$ is taken to be stricty greater than the complexity of $\neg\gamma$.} that {\bf L} satisfies replacement: If $\alpha \sse \beta$ is a theorem of {\bf L} then $\gamma[p/\alpha] \sse \gamma[p/\beta]$ is a theorem of {\bf L}. The details are left to the reader.
\end{proof}

By considering once again  the properties of $\wedge$ and $\to$ inherited from \cplp, and by the notion of derivation in \rmbc, it is easy to see that the {\em Deduction Metatheorem} holds in \rmbc:%\footnote{Of course  the satisfaction of the Deduction Theorem is what lies behind the problem studied in~\cite{woj:88} mentioned in Remark~\ref{wojci}(3).}

\begin{teom}[Deduction Metatheorem for \rmbc] \ \\
$\Gamma, \varphi \vdash_{\rmbc}  \psi$ \ if and only if $\Gamma \vdash_{\rmbc} \varphi \to \psi$.
\end{teom}

An algebraic semantics for \rmbc\ will be given by means of a suitable class of Boolean algebras extended with additional unary operations, which interpret the non-classical connectives. We term these additional operations: \textit{\lfi~operators}. In view of the definition  of derivations in \rmbc\  discussed above,  the semantic consequence relation will be {\em degree preserving} instead of {\em truth preserving} \citep[see]{DegTru}. By analogy to modal logics, we may call such a semantics \textit{local} instead of \textit{global}. We will return to this point in Section~\ref{truth-pres}. 

\begin{defn} [BALFIs] \label{defBALFI}
A {\em Boolean algebra with \lfi\ operators} (BALFI, for short) is an algebra
$\B = \langle A, \wedge, \vee, \to,\neg,\cons,0,1 \rangle$ over $\Sigma_e$ such that its reduct $\mathcal{A}=\langle A, \wedge, \vee, \to,0,1 \rangle$ to $\Sigma_{BA}$ is a Boolean algebra and the unary operators $\neg$ and $\cons$ satisfy: $a \vee \neg a = 1$ and $a \wedge \neg a \wedge \cons a= 0$, for every $a \in A$. The variety\footnote{Recall that a class of algebras is a \textit{variety} if it can be axiomatized by means of equations.} of BALFIs will be denoted by $\mathbb{BI}$.
\end{defn}

\begin{rems} \label{obsBALFIs} \ \\  
(1) The set $A$ is called the {\em carrier} or {\em domain} of \B\ (and also of \A). \\
(2) Recall that an implicative lattice is an algebra $\mathcal{A}=\langle A,\land,\lor,\to,1\rangle$ where the reduct $\langle A,\land,\lor,1\rangle$ is a lattice with top element 1 in which $\bigvee \{ c \in A \ : \  a \wedge c \leq b\}$ exists for every $a,b \in A$, and $a \to b \defin \bigvee \{ c \in A \ : \  a \wedge c \leq b\}$ for every $a,b \in A$.\footnote{Here $\leq$ is the partial order associated to the lattice, namely: $a \leq b$ iff $a=a\wedge b$ iff $b=a \vee b$, and $\bigvee X$ denotes the supremum of the set $X \subseteq A$ w.r.t. $\leq$, provided that it exists. Observe that $a \leq b$ iff $a \to b =1$.} Implicative lattices are in fact distributive. A Heyting algebra is an implicative lattice with least element $0$, where the pseudo-complement is defined as $\sneg a \defin a \to 0$ \citep[see, e.g.,][11.2 and~11.3]{dunn}. A Boolean algebra is a Heyting algebra such that $a \vee \sneg a =1$ for every $a$.	This justifies the use of the signature $\Sigma_{BA}$ for presenting Boolean algebras. \\
(3) BALFIs are in a sense analogous to the {\em Boolean algebras with operators} (BAOs) used as semantics for modal logics (see, e.g., \cite{jons:93}). Observe, however, that \lfi~operators do not generally distribute over meets or joins. \\
\end{rems} 

\begin{defn} [Degree-preserving BALFI semantics] \ \\ \label{degBALFI}
(1) A {\em valuation} over a BALFI \B\ is a homomorphism $v:For(\Sigma) \to \B$ of algebras  over $\Sigma$ (where \B\ is considered, in particular,  as an algebra over $\Sigma$).\\
(2) Let $\varphi$ be a formula in $For(\Sigma)$. We say that $\varphi$ is {\em valid in $\mathbb{BI}$}, denoted by $\models_{\mathbb{BI}} \varphi$, if, for every BALFI \B\ and every valuation $v$ over it,  $v(\varphi)=1$.\\
(3) Let $\Gamma \cup\{\varphi\}$ be a set of formulas in $For(\Sigma)$. We say that {\em $\varphi$ is a local} (or {\em degree-preserving}) consequence of $\Gamma$ in $\mathbb{BI}$, denoted by $\Gamma \models_{\mathbb{BI}} \varphi$, if either  $\varphi$ is valid in $\mathbb{BI}$, or there exists a finite, non-empty subset $\{\gamma_1,\ldots,\gamma_n\}$ of $\Gamma$ such that, for every BALFI \B\ and every valuation $v$ over it, $\bigwedge_{i=1}^nv(\gamma_i) \leq  v(\varphi)$ (recalling that,  in any Boolean algebra \A, the order is given by: $a \leq b$ iff $a \to b=1$).
\end{defn}

\begin{rem}  \ \\  \label{obsded}
(1) Defining BALFI valuations as homomorphisms in this way means that $v(\#\varphi) = \# v(\varphi)$, \text{for} $\#\in \{\neg, \cons\}$, and $v(\varphi \,\#\, \psi) = v(\varphi) \,\#\, v(\psi)$, \text{for} $\#\in \{\wedge,\vee, \to\}$.\\
(2) Note that $\Gamma \models_{\mathbb{BI}} \varphi$ iff either $\varphi$ is valid in $\mathbb{BI}$, or there exists a finite, non-empty subset $\{\gamma_1,\ldots,\gamma_n\}$ of $\Gamma$ such that
$(\gamma_1 \wedge \ldots \wedge \gamma_n) \to \varphi$
is valid. This follows easily from the definitions.
\end{rem}

\begin{teom} [Soundness of \rmbc\ w.r.t. $\mathbb{BI}$] \ \\ \label{sound-BALFI} Let $\Gamma \cup \{\varphi\} \subseteq For(\Sigma)$. Then: $\Gamma \vdash_{\rmbc} \varphi$ \ implies that \ $\Gamma \models_{\mathbb{BI}} \varphi$.
\end{teom}
\begin{proof}
Let $\varphi$ be a an instance of an axiom of \rmbc. It is immediate to see that, for every \B\ and every valuation $v$ on it, $v(\varphi)=1$. Now, let $\alpha,\beta \in For(\Sigma)$.  If $v(\alpha \to \beta)=1$ and $v(\alpha)=1$ then, since $v(\alpha \to \beta)=v(\alpha) \to v(\beta)$, it follows that $v(\beta)=1$. On the other hand, if $v(\alpha \sse \beta)=1$ then $v(\alpha)=v(\beta)$ and so  $v(\#\alpha)=\#v(\alpha)=\#v(\beta)= v(\#\beta)$, hence  it follows that $v(\#\alpha \sse \#\beta)=1$ for $\# \in \{\neg,\cons\}$. From this, by induction on the length of a derivation of $\varphi$ in \rmbc, it can be easily proven that $\varphi$ is valid in $\mathbb{BI}$ whenever $\varphi$ is derivable in \rmbc. Now, suppose that  $\Gamma \vdash_{\rmbc} \varphi$. If $\vdash_{\rmbc} \varphi$ then, by the observation above, $\varphi$ is valid in $\mathbb{BI}$ and so   $\Gamma \models_{\mathbb{BI}} \varphi$. On the other hand, if there exists a finite, non-empty subset $\{\gamma_1,\ldots,\gamma_n\}$ of $\Gamma$ such that 
$\vdash_{\rmbc} (\gamma_1 \wedge \ldots \wedge \gamma_n) \to \varphi$
then, once again by the observation above,
$\models_{\mathbb{BI}} (\gamma_1 \wedge \ldots \wedge \gamma_n) \to \varphi$.
This shows that $\Gamma \models_{\mathbb{BI}} \varphi$, by Remark~\ref{obsded}.
\end{proof}

\begin{teom} [Completeness of \rmbc\ w.r.t. $\mathbb{BI}$] \ \\ \label{comple-BALFI} Let $\Gamma \cup \{\varphi\} \subseteq For(\Sigma)$. Then: $\Gamma  \models_{\mathbb{BI}} \varphi$ \ implies that \ $\Gamma \vdash_{\rmbc} \varphi$.
\end{teom}
\begin{proof}
Define the following relation on $For(\Sigma)$: $\alpha \equiv \beta$ iff $\vdash_{\rmbc} \alpha \sse \beta$. It is clearly an equivalence relation, by the properties of \cplp. Let $A_{can}\defin For(\Sigma)/_{\equiv}$ be the quotient set, and define over $A_{can}$ the following operations: $[\alpha] \,\#\, [\beta] \defin [\alpha \# \beta]$, for $\# \in \{\land,\lor,\imp\}$, where $[\alpha]$ denotes the equivalence class of $\alpha$ w.r.t. $\equiv$. Let $0 \defin [\alpha \wedge \neg \alpha \wedge \cons \alpha]$ and $1 \defin [\alpha \vee \neg \alpha]$. These operations and constants are clearly well-defined, by Theorem~\ref{IPE-rmbc}, and so they induce a structure of Boolean algebra over the set $A_{can}$, which will be denoted by $\mathcal{A}_{can}$. Let $\mathcal{B}_{can}$ be its expansion to $\Sigma_e$ by defining $\#[\alpha] \defin [\#\alpha]$, for $\# \in \{\neg,\cons\}$. By Theorem~\ref{IPE-rmbc} these operations are well-defined, and it is immediate to see that $\mathcal{B}_{can}$ is a BALFI. Let $v_{can}:For(\Sigma) \to \mathcal{B}_{can}$ given by $v_{can}(\alpha)=[\alpha]$. Clearly $v_{can}$ is a valuation over  $\mathcal{B}_{can}$ such that $v_{can}(\alpha)=1$ iff $\vdash_{\rmbc} \alpha$. 

Now, suppose that $\Gamma  \models_{\mathbb{BI}} \varphi$, and recall Remark~\ref{obsded}. If $\models_{\mathbb{BI}} \varphi$ then, in particular, $v_{can}(\varphi)=1$ and so $\vdash_{\rmbc} \varphi$. From this, $\Gamma \vdash_{\rmbc} \varphi$. On the other hand, if there exists a finite, non-empty subset $\{\gamma_1,\ldots,\gamma_n\}$ of $\Gamma$ such that $\models_{\mathbb{BI}} (\gamma_1 \wedge \ldots \wedge \gamma_n) \to \varphi$ then, in particular, $v_{can}((\gamma_1 \wedge \ldots \wedge \gamma_n) \to \varphi)=1$. This means that $\vdash_{\rmbc} (\gamma_1 \wedge \ldots \wedge \gamma_n) \to \varphi$ and so  $\Gamma \vdash_{\rmbc} \varphi$.
\end{proof}

\begin{defn} \label{modcan}
The pair $\langle \mathcal{B}_{can},v_{can}\rangle$ defined in the proof of Theorem~\ref{comple-BALFI} is called {\em the canonical model} of \rmbc.
\end{defn}

\begin{exem} [BALFIs over $\wp(\{w_1,w_2\})$] \label{allBALFIs} Let $\A_4=\wp(\{w_1,w_2\}) = \{0, a,b,1\}$ be the powerset of $W_2=\{w_1,w_2\}$ such that $0=\emptyset$, $a=\{w_1\}$, $b=\{w_2\}$ and $1=\{w_1,w_2\}$. Then, the BALFIs defined by expanding the Boolean algebra $\A_4$ are shown in Figure~\ref{fig:BALFI-models1}.
\begin{center}
\begin{figure}[h!] 
%\vspace*{-0.25cm}
  \centering
    \includegraphics[width=0.5\textwidth]{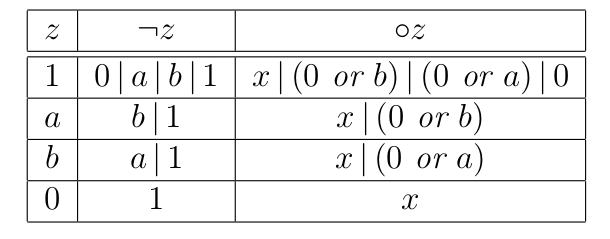} 
    \caption{Set of BALFIs over $\A_4$ (the symbol $|$ separates the possible options for the values of $\neg z$ and $\cons z$ for every value of $z$, while $x$ stands for any element of $\A_4$)} \label{fig:BALFI-models1}
%\vspace*{-.8cm}
\end{figure}
\end{center}%
%\begin{center}
%\begin{tabular}{|c|c|c|}
%\hline
%$z$ & $\neg z$ & $\cons z$ \\ \hline \hline
%$1$ & $0 \, |\, a\, |\, b\, |\, 1$ & $x\, |\, (0 \mbox{ or } b)\, |\, (0 \mbox{ or } a) \, |\, 0$ \\ \hline
%$a$ & $b \,| \, 1$ & $x \, |\, (0 \mbox{ or } b) $ \\ \hline
%$b$ & $a \,|\, 1$ & $x\, |\, (0 \mbox{ or } a) $ \\ \hline
%$0$ & $1$ & $x$ \\ \hline
%\end{tabular}
%\end{center}
\noindent On each row of  Figure~\ref{fig:BALFI-models1}, each choice in the ith position of the sequence of options in the column for $\neg z$ forces a choice  in the ith position of the sequence of options in the column for $\cons z$. For instance, if in the current BALFI we choose $\neg 1=b$ then there are two possibilities for the value of $\cons 1$ in that BALFI: either $\cons 1=0$ or $\cons 1=a$. On the other hand, by choosing that $\neg a=1$ it forces that either $\cons a=0$ or $\cons a=b$. Otherwise, if $\neg a=b$ then $\cons a$ can be arbitrarily chosen.  
\end{exem}

\begin{remark} \label{obsRmbC}
Observe that the rules (\Rn) and (\Rb) do not ensure that $ \vdash_\rmbc (\alpha \sse \beta) \to (\neg\alpha \sse \neg\beta)$ and  $ \vdash_\rmbc (\alpha \sse \beta) \to (\cons\alpha \sse \cons\beta)$ in general. Consider, for instance $\alpha=p$ and $\beta=q$ where $p$ and $q$ are two different propositional variables, and take the BALFI \B\ (in Figure~\ref{fig:BALFI-models2}) defined over the Boolean algebra $\wp(\{w_1,w_2\})$, according to Example~\ref{allBALFIs}.

\begin{center}
\begin{figure}[h!]
\vspace*{-0.45cm}
  \centering
    \includegraphics[width=0.2\textwidth]{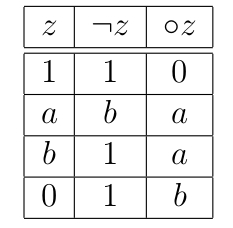}
    \caption{Counter-model BALFI (over $\A_4$) for $\vdash_\rmbc (\alpha \sse \beta) \to (\#\alpha \sse \#\beta)$; $\#\in\{\neg, \circ\}$}\label{fig:BALFI-models2}
    %\vspace*{-1.cm}
\end{figure}
\end{center}

%\begin{center}
%\begin{tabular}{|c|c|c|}
%\hline
%$z$ & $\neg z$ & $\cons z$ \\ \hline \hline
%$1$ & $1$ & $0$ \\ \hline
%$a$ & $b$ & $a$ \\ \hline
%$b$ & $1$ & $a$ \\ \hline
%$0$ & $1$ & $b$ \\ \hline
%\end{tabular}
%\end{center}
\noindent Now, consider a valuation $v$ over \B\  such that $v(p)=a$ and $v(q)=1$. Hence $v(\neg p)=b$, $v(\neg q)=1$, $v(\cons p)=a$ and $v(\cons q) = 0$. From this $v(p \sse q)=a$ and  $v(\neg p \sse \neg q)= v(\cons p \sse \cons q)= b$. Therefore $v((p \sse q) \to (\neg p \sse \neg q))=v((p \sse q) \to (\cons p \sse \cons q)) =b$. That is, none of the last two formulas is valid in \rmbc. Of course both formulas hold if  $\vdash_\rmbc (\alpha \sse \beta)$, by (\Rn) and (\Rb).

Evidently, \rmbc\ is paraconsistent: in the BALFI \B\ we just defined  above, the given valuation $v$ shows that $q,\neg q \nvdash_{\rmbc} p$. Now, recalling Definition~\ref{defLFI}, \rmbc\ is clearly an \lfi. Consider the following BALFI $\B'$ (in Figure~\ref{fig:BALFI-models3}) defined over $\wp(\{w_1,w_2\})$, using again Example~\ref{allBALFIs}.

\begin{center}
\begin{figure}[h!]
\vspace*{-0.25cm}
  \centering
    \includegraphics[width=0.2\textwidth]{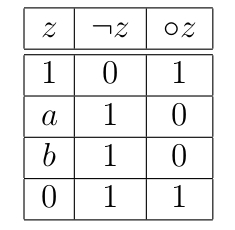}
    \caption{BALFI (over $\A_4$) validating the \textit{principle of gentle explosion} in a non-trivial way}\label{fig:BALFI-models3}
    %\vspace*{-.8cm}
\end{figure}
\end{center}
%\begin{center}
%\begin{tabular}{|c|c|c|}
%\hline
%$z$ & $\neg z$ & $\cons z$ \\ \hline \hline
%$1$ & $0$ & $1$ \\ \hline
%$a$ & $1$ & $0$ \\ \hline
%$b$ & $1$ & $0$ \\ \hline
%$0$ & $1$ & $1$ \\ \hline
%\end{tabular}
%\end{center}
\noindent Take a valuation $v'$ over $\B'$ such that $v'(p)=1$ and $v'(q)=a$. This shows that $p,\cons p \nvdash_{\rmbc} q$.  Now, a valuation $v''$  over $\B'$ such that $v''(p)=0$ and $v''(q)=b$ shows that $\neg p, \cons p \nvdash_{\rmbc} q$. On the other hand, from axiom (\axexp), the axioms of \cplp, and by Definition~\ref{rules}(3),  it is the case that $\alpha, \neg\alpha, \cons\alpha \vdash_{\rmbc} \beta$ for any formulas $\alpha$ and $\beta$.
\end{remark}

\section{Adding replacement to extensions of \mbc: A solution to an open problem} \label{Sexte-mbc}

In~\cite{CM}, the first study on \lfis, the replacement property was analyzed under the name (IpE), presented in the following (equivalent) way:\\

(IpE) \ if $\alpha_i \dashv\vdash \beta_i$ (for $1 \leq i \leq n$) then $\varphi(\alpha_1,\ldots,\alpha_n) \dashv\vdash \varphi(\beta_1,\ldots,\beta_n)$\\[2mm]

\noindent for any formulas $\alpha_i, \beta_i, \varphi$. In that paper an important question was posed: to define extensions of \bc\ and \ci\ (two axiomatic extensions of \mbc\ to be analyzed below) which satisfy (IpE) while still being paraconsistent.\footnote{In~\cite{CM}, page~41, one reads: ``The question then would be if (IpE) could be obtained for {\em real} \lfis''.
% On page~53, the authors say: ``So far,we have some negative results about the validity of (IpE) in some possible paraconsistent extensions of \bc\ or \ci, but are there paraconsistent extensions of these logics in which (IpE) {\em does} hold?''. 
On page~54, after observing that in extensions of \ci\ it is enough ensuring (IpE) for $\wneg$, since it implies (IpE) for $\cons$,  it is said that ``We suspect that this can be done, but we shall leave it as an open problem at this point''. Finally, they observe on page~55,  footnote~17 that certain 8-valued matrices presented by Urbas satisfy (IpE) for  an extension of \bc. However, this logic is not paraconsistent. After this, they claim that ``the question is still left open as to whether there are {\em paraconsistent} such extensions of \bc !''.} In this section, we will present a general (in the sense of minimal) solution to that open problem, obtained by extending axiomatically \rmbc. In what follows, some \lfis\ which are axiomatic extensions of \mbc\ (\bc\ and \ci, among others) will be enriched with replacement, in a similar way as we did for \rmbc.
%\newpage
\begin{defn} [Some extensions of \mbc]  \label{exte-mbc}
Consider the following axioms presented in~\cite{CM}  and~\cite{CC16}:
  \begin{gather}
    \cons\alpha \lor (\alpha \land \wneg \alpha)             \tag{\axwci}\\
    \wneg\cons\alpha \imp (\alpha \land \wneg \alpha)             \tag{\axci}\\
\wneg(\alpha \land \wneg \alpha) \imp \cons\alpha              \tag{\axcl}\\
\wneg\wneg\alpha \imp \alpha             \tag{\axcf}\\
    \alpha \imp \wneg\wneg\alpha             \tag{\axce}\\
   (\cons\alpha \land \cons\beta) \imp \cons(\alpha\land \beta)  \tag{\axca$_\land$}\\
   (\cons\alpha \land \cons\beta) \imp \cons(\alpha\lor \beta)  \tag{\axca$_\lor$}\\
   (\cons\alpha \land \cons\beta) \imp \cons(\alpha\imp \beta)  \tag{\axca$_\imp$}
  \end{gather}
\end{defn}

\begin{remark} \label{obsRmbC-ext} \ \\ 
(1) Axiom (\axwci) was introduced by \citet{avr:05} by means of two axioms, ({\bf k1}): $\cons\alpha \lor \alpha$ \ and  \ ({\bf k2}): $\cons\alpha \lor  \wneg \alpha$. Strictly speaking, ({\bf k1}) and ({\bf k2}) were presented as rules in a standard Gentzen calculus.\\
(2) A classical negation (denoted by $\sneg$) is definable in \mbc\ as $\sneg\alpha = \alpha \to \bot$. Here, $\bot$ denotes any formula of the form $\beta \wedge \neg \beta \wedge \cons\beta$. It is not difficult to show that $\cons\alpha \imp \sneg(\alpha \land \wneg \alpha)$ is a theorem of \mbc, and thus of \rmbc. Also notice that the converse of this formula, namely, $\sneg(\alpha \land \wneg \alpha) \imp \cons\alpha$, is derived in \mbc\ (hence, in \rmbc) from (\axwci).\\
(3) It can also be shown that $(\alpha \land \wneg \alpha) \imp \wneg\cons\alpha$, and $\cons\alpha \imp \wneg(\alpha \land \wneg \alpha)$ are theorems of \mbc~and therefore of \rmbc~(for instance, by applying the Deduction Metatheorem to~Theorem~49~(ii) and~(iii) in~\cite{CCM}). Notice that these formulas are the `converses' of (\axci) and (\axcl) respectively.
\end{remark}

\begin{defn} Let $\B = \langle A, \wedge, \vee, \to,\neg,\cons,0,1 \rangle$ be a BALFI, and let $\varphi$ be a formula over $\Sigma$. We say that $\B$ {\em is a model of $\varphi$} (considered as an axiom schema), denoted by $\B\Vdash \varphi$, if $v(\varphi)=1$ for every valuation $v$ over $\B$.
\end{defn}

As  mentioned in Remark~\ref{obsBALFIs} the Boolean complement in Boolean algebras defined over the signature $\Sigma_{BA}$ is given by $\sneg a = a \to 0$. 

\begin{prop} \label{models-axioms}
Let $\B = \langle A, \wedge, \vee, \to,\neg,\cons,0,1 \rangle$ be a BALFI. Then:\\[1mm]
(1) $\B$ is a model of $(\axwci)$ iff $\B$ satisfies the equation $\cons a = \sneg(a \wedge \neg a)$ for every $a \in A$;\\[1mm]
(2) $\B$ is a model of $(\axci)$ iff $\B$ satisfies the equation $\neg\cons a = a \wedge \neg a$ for every $a \in A$;\\[1mm]
(3) $\B$ is a model of $(\axcl)$ iff $\B$ satisfies the equation $\cons a = \neg(a \wedge \neg a)$ for every $a \in A$;\\[1mm]
(4) $\B$ is a model of $(\axcf)$ iff $\B$ satisfies the equation $a \wedge \neg\neg a= \neg\neg a$ for every $a \in A$;\\[1mm]
(5) $\B$ is a model of $(\axce)$ iff $\B$ satisfies the equation $a \wedge \neg\neg a= a$ for every $a \in A$;\\[1mm]
(6) $\B$ is a model of $(\axca_\#)$ iff $\B$ satisfies the equation $(\cons a \wedge \cons b) \wedge \cons(a \# b)= \cons a \wedge \cons b$ for every $a,b \in A$, for each $\# \in \{\wedge,\vee, \to\}$.
\end{prop}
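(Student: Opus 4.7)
The plan is to establish each equivalence by unfolding the definition of $\mathcal{B} \Vdash \varphi$ and then translating the resulting ``evaluates to $1$'' conditions into equational statements about $\mathcal{B}$. First I would observe that, since valuations are arbitrary homomorphisms into $\mathcal{B}$ and the propositional variables are free, $\mathcal{B} \Vdash \varphi(p_1,\ldots,p_n)$ is equivalent to the corresponding term function taking value $1$ on every tuple of elements of the carrier of $\mathcal{B}$. The core toolkit is then the standard dictionary between equations and order in Boolean algebras: $x \to y = 1$ iff $x \leq y$; $x \leq y$ iff $x = x \wedge y$ iff $y = x \vee y$; and $x \vee y = 1$ iff $\sneg x \leq y$.

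For items (4), (5) and (6) the argument is essentially direct: each axiom is an implication $\alpha \to \beta$, so validity under every valuation $v$ reduces to $v(\alpha) \leq v(\beta)$, which by the above dictionary is precisely the equation stated. For item (1), I would combine two inequalities: the BALFI axiom $a \wedge \neg a \wedge \cons a = 0$ immediately yields $\cons a \leq \sneg(a \wedge \neg a)$, and unfolding $(\axwci)$ through the identity $x \vee y = 1 \Leftrightarrow \sneg x \leq y$ yields the reverse inequality $\sneg(a \wedge \neg a) \leq \cons a$; together these give $\cons a = \sneg(a \wedge \neg a)$. The converse is immediate, since the equation forces $\cons a \vee (a \wedge \neg a) = \sneg(a \wedge \neg a) \vee (a \wedge \neg a) = 1$.

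For items (2) and (3) I would appeal to the \mbc-theorems recorded in Remark~\ref{obsRmbC-ext}(3), namely $(\alpha \wedge \wneg \alpha) \to \wneg\cons\alpha$ and $\cons\alpha \to \wneg(\alpha \wedge \wneg\alpha)$. Since these are theorems of \mbc, they are theorems of \rmbc, and by Theorem~\ref{sound-BALFI} (Soundness) they translate into the universal inequalities $a \wedge \neg a \leq \neg \cons a$ and $\cons a \leq \neg(a \wedge \neg a)$ that hold in every BALFI. Unfolding $(\axci)$ and $(\axcl)$ gives the reverse inequalities $\neg \cons a \leq a \wedge \neg a$ and $\neg(a \wedge \neg a) \leq \cons a$ respectively, and combining each pair yields the desired equations. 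The converse directions are again immediate, since each equation trivially implies the corresponding inequality, hence $(\axci)$ and $(\axcl)$ are valid under every valuation.

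The main subtlety lies in items (2) and (3), where naive unfolding of the axiom produces only one of the two inequalities needed to obtain an equation; the reverse inequalities are not derivable from the BALFI identities alone but must be imported from pre-existing theorems of \mbc\ through soundness. Everything else is routine Boolean manipulation.
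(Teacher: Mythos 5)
Your proposal is correct and follows essentially the same route as the paper: items (4)--(6) are read off directly from the order--equation dictionary, item (1) combines the validity of the axiom with the BALFI identity $a\wedge\neg a\wedge\cons a=0$ (uniqueness of the Boolean complement, which you phrase as two inequalities), and items (2)--(3) import the converse implications from Remark~\ref{obsRmbC-ext}(3) exactly as the paper indicates. No gaps.
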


\begin{proof}
We prove~(1). Suppose that $\B$ is a model of $(\axwci)$. Then, $v(\cons p \lor (p \land \wneg p))=\cons v(p) \lor (v(p) \land \wneg v(p))=1$ for every variable $p$ and any valuation $v$. Since $v(p)$ can take  an arbitrary value in $A$, this is equivalent to say that $\cons a \lor (a \land \wneg a)=1$ for every $a \in A$. Since $\cons a \land (a \land \wneg a)=0$ (by definition of BALFIs), it follows that $\cons a= \sneg (a \land \wneg a)$. The converse is immediate.
Proofs for (2) and (3) are obtained in an analogous way (recalling Remark~\ref{obsRmbC-ext}~(3)).
The proofs of (4), (5), and (6) are immediate from the definitions.
\end{proof}

Let $Ax$ be a set formed by one or more of the axiom  schemas introduced in Definition~\ref{exte-mbc}, and let $\mbc(Ax)$ be the logic defined by the Hilbert calculus obtained from \mbc\ by adding the set $Ax$ of axiom  schemas. Let $\mathbb{BI}(Ax)$ be the class of BALFIs which are models of every axiom in $Ax$. Clearly, $\mathbb{BI}(Ax)$ is a variety of algebras, since it is characterized by a set of equations. Finally, let $\rmbc(Ax)$ be the logic obtained from  \rmbc\ by adding the set $Ax$ of axiom  schemas. It is simple to adapt the proofs of Theorems~\ref{sound-BALFI} and~\ref{comple-BALFI} (in particular, by defining for each logic the corresponding canonical model, as in Definition~\ref{modcan}) in order to obtain the following:

\begin{teom}[Soundness and completeness of $\rmbc(Ax)$ w.r.t. $\mathbb{BI}(Ax)$]  \label{adeq-ext-BALFI} Let $\Gamma \cup \{\varphi\} \subseteq For(\Sigma)$. Then: $\Gamma \vdash_{\rmbc(Ax)} \varphi$ \ \ if and only if \ \ $\Gamma \models_{\mathbb{BI}(Ax)} \varphi$.
\end{teom}

From this important result, some properties of the calculi  $\rmbc(Ax)$ can be easily proven by algebraic methods, that is, by means of BALFIs. For instance:

\begin{prop} \label{collapse} $\mathbb{BI}(\{\axci,\axcf\})=\mathbb{BI}(\{\axcl,\axcf\})=\mathbb{BI}(\{\axci, \axcl,\axcf\})$. Hence, the logics  ${\rmbc}(\{\axci,\axcf\})$, ${\rmbc}(\{\axcl,\axcf\})$ and ${\rmbc}(\{\axci, \axcl,\axcf\})$ coincide.
\end{prop}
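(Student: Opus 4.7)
The plan is to prove the set-theoretic equality of the three varieties and then invoke the soundness and completeness theorem (the one just preceding the statement) to transfer this equality to the level of logics. Since $\mathbb{BI}(\{\axci,\axcl,\axcf\}) = \mathbb{BI}(\{\axci,\axcf\}) \cap \mathbb{BI}(\{\axcl,\axcf\})$ trivially, the whole content of the proposition reduces to showing the two mutual inclusions $\mathbb{BI}(\{\axci,\axcf\}) \subseteq \mathbb{BI}(\{\axcl,\axcf\})$ and $\mathbb{BI}(\{\axcl,\axcf\}) \subseteq \mathbb{BI}(\{\axci,\axcf\})$.

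First I would unpack the relevant axioms as equations, using Proposition~\ref{models-axioms}: membership in $\mathbb{BI}(\{\axci\})$ means $\neg\cons a = a\wedge\neg a$, membership in $\mathbb{BI}(\{\axcl\})$ means $\cons a = \neg(a\wedge\neg a)$, and $(\axcf)$ translates to $\neg\neg a \leq a$. I would also recall from Remark~\ref{obsRmbC-ext}(3) that the ``converses'' of $(\axci)$ and $(\axcl)$ are already theorems of \mbc, hence by soundness hold in every BALFI: namely $a\wedge\neg a \leq \neg\cons a$ and $\cons a \leq \neg(a\wedge\neg a)$ are universally valid.

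Now for $\mathbb{BI}(\{\axci,\axcf\}) \subseteq \mathbb{BI}(\{\axcl,\axcf\})$: starting from $\neg\cons a = a\wedge\neg a$, I would apply $\neg$ to both sides, yielding $\neg\neg\cons a = \neg(a\wedge\neg a)$. Using $(\axcf)$ in the form $\neg\neg\cons a \leq \cons a$, this gives $\neg(a\wedge\neg a) \leq \cons a$. Combined with the universally valid inequality $\cons a \leq \neg(a\wedge\neg a)$, equality holds and $(\axcl)$ follows. Symmetrically, for $\mathbb{BI}(\{\axcl,\axcf\}) \subseteq \mathbb{BI}(\{\axci,\axcf\})$: from $\cons a = \neg(a\wedge\neg a)$, applying $\neg$ gives $\neg\cons a = \neg\neg(a\wedge\neg a)$, and $(\axcf)$ applied to $a\wedge\neg a$ gives $\neg\cons a \leq a\wedge\neg a$; combining with the always-valid $a\wedge\neg a \leq \neg\cons a$ yields $(\axci)$.

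Once the equality of varieties is established, the coincidence of the three logics follows at once from Theorem~\ref{adeq-ext-BALFI}, since each $\rmbc(Ax)$ is sound and complete with respect to $\mathbb{BI}(Ax)$, so identical classes of models force identical consequence relations. I do not expect any real obstacle here: the only subtle point is remembering that one half of each of $(\axci)$ and $(\axcl)$ is already available ``for free'' in \mbc\ (Remark~\ref{obsRmbC-ext}(3)), so that the extra axioms $(\axci)$ and $(\axcl)$ really only contribute the non-trivial inequality, which is precisely what $(\axcf)$ converts between the $\neg\cons$ and $\cons$ formulations.
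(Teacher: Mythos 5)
Your proof is correct and follows essentially the same route as the paper's: both arguments invoke Remark~\ref{obsRmbC-ext}(3) for the universally valid inequalities $a\wedge\neg a \leq \neg\cons a$ and $\cons a \leq \neg(a\wedge\neg a)$, apply $\neg$ to the equation given by (\axci) (resp.\ (\axcl)), use (\axcf) in the form $\neg\neg x \leq x$ to obtain the missing inequality, and then transfer the identity of varieties to the logics via Theorem~\ref{adeq-ext-BALFI}. No substantive difference.
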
 
\begin{proof} Recall from Remark \ref{obsRmbC-ext}~(3) that $\vdash_{\rmbc}(\alpha \wedge \neg\alpha) \to \neg\cons\alpha$ and $\vdash_{\rmbc}\cons\alpha \to \neg(\alpha \wedge \neg\alpha)$. Then, for every BALFI \B\ with carrier $A$, and every $a \in A$, $(a \wedge \neg a) \leq \neg\cons a$ and $\cons a \leq \neg(a \wedge \neg a)$.    Let $\B \in \mathbb{BI}(\{\axci,\axcf\})$, and let $a \in A$. Then $a \land \neg a=\neg\cons a$ and so $\neg(a \land \neg a)=\neg\neg\cons a \leq \cons a$. Therefore $\B \in \mathbb{BI}(\{\axcl,\axcf\})$.  Conversely, suppose that $\B \in \mathbb{BI}(\{\axcl,\axcf\})$ and let $a \in A$. Since $\cons a = \neg(a \wedge \neg a)$ then $\neg\cons a = \neg\neg(a \wedge \neg a) \leq (a \wedge \neg a)$. From this, $\B \in \mathbb{BI}(\{\axci,\axcf\})$. This shows the first part of the Proposition. The second part follows from Theorem~\ref{adeq-ext-BALFI}.
\end{proof}

\begin{exem} [BALFIs for \rmbcciw] \label{def-rmbcciw}
The logic \mbc(\axwci) was considered in~\cite{CC16} under the name \mbcciw. This logic was introduced in~\cite{avr:05} under the name ${\bf B}[\{({\bf k1}),({\bf k2})\}]$, presented by means of a standard Gentzen calculus such that {\bf B} is a Gentzen calculus  for \mbc. The logic \mbcciw\ is the least extension of \mbc\ in which the consistency connective can be defined in terms of the other connectives, namely: $\cons \alpha$ is equivalent to $\sneg(\alpha \wedge \neg\alpha)$.\footnote{Recall from Remark~\ref{obsRmbC-ext}~(2) that $\sneg$ denotes the classical negation definable in \mbc\ as $\sneg\alpha = \alpha \to \bot$, where $\bot$ denotes any formula of the form $\beta \wedge \neg \beta \wedge \cons\beta$. Thus, rigorously speaking, $\circ$ {\em is not} defined in terms of the other connectives, since $\circ$ is essential on order to define $\bot$. So, the right signature for \mbcciw\ and its extensions is  $\Sigma_{C_0}$, introduced in Definition~\ref{signa}.}
Let \rmbcciw\ be the logic \rmbc(\axwci). Because of the satisfaction of the replacement  property, and given that the consistency connective can be defined in terms of the other connectives, the connective $\cons$ can be eliminated from the signature, and so we consider the logic \rmbcciw\  defined over the signature $\Sigma_{C_0}$ (recall Definition~\ref{signa}), obtained from \cplp\ by adding  $(\axtnd)$, $(\Rn)$, and  axiom schema $({\bf Bot})$: $\bar 0 \to \alpha$. In such presentation of \rmbcciw, the strong negation is defined by the formula $\sneg\alpha=\alpha \to \bar 0$. The algebraic models for this presentation of \rmbcciw\  are given by BALFIs $\B = \langle A, \wedge, \vee, \to,\neg,0,1 \rangle$ over $\Sigma_{C_e}$ such that its reduct $\mathcal{A}=\langle A, \wedge, \vee, \to,0,1 \rangle$ to $\Sigma_{BA}$ is a Boolean algebra and the unary operator $\neg$ satisfies $a \vee \neg a = 1$ for every $a \in A$. On the other hand, the term $\cons a$ is an abbreviation for $\sneg(a \land \neg a)$ in such BALFIs. 
\end{exem}

It is also interesting to observe that $\cons$ satisfies a sort of \textit{necessitation rule} (by analogy to modal logics) in certain extensions of \rmbc:

\begin{prop} Consider the {\em necessitation rule} for \cons: 
\[\frac{\alpha}{\cons\alpha}  \hspace{8mm}  (Nec_\cons)\]
Then, $(Nec_\cons)$ is an admissible rule in $\rmbc(\{\axcl,\axce\})$.\footnote{Recall that a structural inference rule is  {\em admissible} in a logic {\bf L} if the following holds: whenever the premises of an instance of the rule are theorems of {\bf L}, then the conclusion of the same instance of the rule is a theorem of {\bf L}.}
\end{prop}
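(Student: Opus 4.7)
The plan is to establish admissibility by a direct syntactic derivation: assume $\vdash_{\rmbc(\{\axcl,\axce\})} \alpha$ and deduce $\vdash_{\rmbc(\{\axcl,\axce\})} \cons\alpha$, using the two additional axioms together with the rule $(\Rn)$ provided by \rmbc.

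First, from the hypothesis $\vdash \alpha$ and axiom $(\axce)$, modus ponens gives $\vdash \wneg\wneg\alpha$. Next, since $\alpha$ is a theorem, positive classical logic delivers $\vdash (\alpha \land \wneg\alpha) \sse \wneg\alpha$ (a conjunction with a theorem is equivalent to the other conjunct, an easy consequence of $(\axed)$ and $(\axeeb)$ together with $\vdash \alpha$). Applying $(\Rn)$ to this equivalence yields $\vdash \wneg(\alpha \land \wneg\alpha) \sse \wneg\wneg\alpha$, and combining this with the already derived $\vdash \wneg\wneg\alpha$ we conclude $\vdash \wneg(\alpha \land \wneg\alpha)$. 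One last use of $(\axcl)$ and modus ponens then yields $\vdash \cons\alpha$, as required.

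Alternatively, the same fact is immediate from the soundness and completeness result (Theorem~\ref{adeq-ext-BALFI}): if $\models_{\mathbb{BI}(\{\axcl,\axce\})} \alpha$, then for every BALFI $\B$ in this class and every valuation $v$ one has $v(\alpha)=1$, so $v(\cons\alpha) = \cons 1 = \wneg(1 \land \wneg 1) = \wneg\wneg 1$ by Proposition~\ref{models-axioms}(3); and Proposition~\ref{models-axioms}(5) forces $\wneg\wneg 1 = 1$, giving $\models \cons\alpha$ and hence, by completeness, $\vdash \cons\alpha$.

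There is no serious obstacle here: the crucial observation is simply that, under the hypothesis $\vdash \alpha$, the formula $\alpha \land \wneg\alpha$ collapses to $\wneg\alpha$, so that $(\Rn)$ transfers the already available $\wneg\wneg\alpha$ into the antecedent of $(\axcl)$. The only subtlety worth flagging is that $(\Rn)$ is being applied to an equivalence that is itself a \emph{theorem}, rather than as an object-language implication (cf.\ Remark~\ref{obsRmbC}); this is precisely the intended usage according to Definition~\ref{rules}, and explains why admissibility of $(Nec_\cons)$ does not upgrade to a local rule.
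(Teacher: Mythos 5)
Your syntactic derivation is essentially identical to the paper's proof: both obtain $\wneg\wneg\alpha$ from $(\axce)$, use the \cplp-provable equivalence of $\wneg\alpha$ with $\alpha \land \wneg\alpha$ (valid since $\alpha$ is a theorem), push it through $(\Rn)$ to get $\wneg(\alpha\land\wneg\alpha)$, and finish with $(\axcl)$ and modus ponens. The additional semantic argument via Theorem~\ref{adeq-ext-BALFI} and Proposition~\ref{models-axioms} is a correct bonus, but the core argument matches the paper's.
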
 
\begin{proof} Assume that $\vdash_{\rmbc(\{\axcl,\axce\})} \alpha$.  By the rules of \cplp\ it follows that  $\vdash_{\rmbc(\{\axcl,\axce\})} \beta \sse (\alpha \wedge \beta)$ for every formula $\beta$. In particular, $\vdash_{\rmbc(\{\axcl,\axce\})} \neg\alpha \sse (\alpha \wedge \neg\alpha)$ and so, by (\Rn), $\vdash_{\rmbc(\{\axcl,\axce\})} \neg\neg\alpha \sse \neg(\alpha \wedge \neg\alpha)$. On the other hand, from    $\vdash_{\rmbc(\{\axcl,\axce\})} \alpha$ it follows that   $\vdash_{\rmbc(\{\axcl,\axce\})} \neg\neg\alpha$, by (\axce) and (\MP). Then  $\vdash_{\rmbc(\{\axcl,\axce\})} \neg(\alpha \wedge \neg\alpha)$. By (\axcl) and (\MP) we conclude that $\vdash_{\rmbc(\{\axcl,\axce\})} \cons\alpha$.
\end{proof}

\

We  can now provide a solution to the first open problem posed in~\cite{CM}:

\begin{exem} [A  paraconsistent extension of \bc\ with replacement]  \label{exem-bC} \ \\
Consider the logic \bc\ introduced in~\cite{CM}. By using the notation proposed above, \bc\ corresponds to \mbc(\axcf).\footnote{We will write $\mbc(\varphi)$, $\rmbc(\varphi)$ and $\mathbb{BI}(\varphi)$ instead of  $\mbc(\{\varphi\})$, $\rmbc(\{\varphi\})$ and $\mathbb{BI}(\{\varphi\})$, respectively.} Then \rbc\ (that is,  \rmbc(\axcf)) is an extension of \bc\ which satisfies replacement (by Theorem~\ref{IPE-rmbc}) while being still paraconsistent. Moreover, \rbc\ is an \lfi. These facts  can be easily proven by using the BALFI $\B'$ defined over $\wp(\{w_1,w_2\})$ considered in Remark~\ref{obsRmbC} (see Figure~\ref{f00}).
\begin{center}
	\begin{figure}[h!]
		%\vspace*{-0.25cm}
		\centering
		\includegraphics[width=0.2\textwidth]{f03.jpg}
		\caption{A 4-elements paraconsistent BALFI for \rbc}\label{f00}
		%\vspace*{-1.cm}
	\end{figure}
\end{center}
\noindent In fact, it is immediate to see that $\B'$ is a model of  $(\axcf)$. It is worth noting that  $\B'$ is {\em not} a model of  $(\axwci)$: $0=\cons a \neq \sneg(a\wedge \neg a)=\sneg a =b$. Therefore,  $\B'$ is neither a model of  $(\axci)$ nor of $(\axcl)$, given that any of these axioms implies $(\axwci)$.
\end{exem}

We can now offer a solution to the second open problem posed in~\cite{CM}:

\begin{exem} [A paraconsistent extension of \ci\ with replacement] \label{exem-Ci} 
Now, consider the logic \ci\ introduced in~\cite{CM}, which corresponds to \mbc(\{\axcf, \axci\}), and let   \rci=\rmbc(\{\axcf, \axci\}). By Proposition~\ref{collapse}, \rci\  also derives the schema $(\axcl)$. It can be proven that \rci\ is an extension of \ci\ which satisfies replacement (by Theorem~\ref{IPE-rmbc}) while it is still paraconsistent. In order to prove this, consider the BALFI $\B''$ defined over the Boolean algebra $\A_{16}=\wp(W_4)$, the powerset of $W_4=\{w_1,w_2,w_3,w_4\}$ displayed in Figure~\ref{f01}. In that figure, $X$ is different from $\{w_1,w_2\}$ and $\{w_3,w_4\}$ (note that $0=\emptyset$ and $1=W_4$). It is immediate to see that $\B''$ is a BALFI for \rci. Hence, using this model it follows easily that \rci\ is a paraconsistent extension of \ci\ which satisfies (IpE) and $(\axcl)$.

Another paraconsistent model for \rci\  defined over $\A_{16}$ is displayed in Figure~\ref{f02}. Here, the cardinal of $X$ is different from $2$.\footnote{It is worth noting that  many of the experiments leading to the	generation of the two models presented here were carried out  with the help of the model finder {\em Nitpick} \citep{Nitpick}, which is part of the automated tools integrated into the proof assistant  Isabelle/HOL~\citep{nipkow2002isabelle}; recall also the comments in Footnote~\ref{fn:isabelle}.}

\begin{center}
\begin{figure}[h!]
%\vspace*{-0.25cm}
  \centering
    \includegraphics[width=0.55\textwidth]{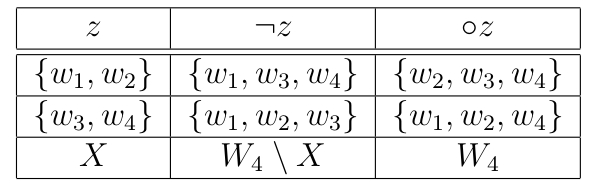}
    \caption{A 16-elements paraconsistent BALFI for \rci}\label{f01}
    %\vspace*{-0.2cm}
\end{figure}
\end{center}

\begin{center}
\begin{figure}[h!]
%\vspace*{-0.25cm}
  \centering
    \includegraphics[width=0.55\textwidth]{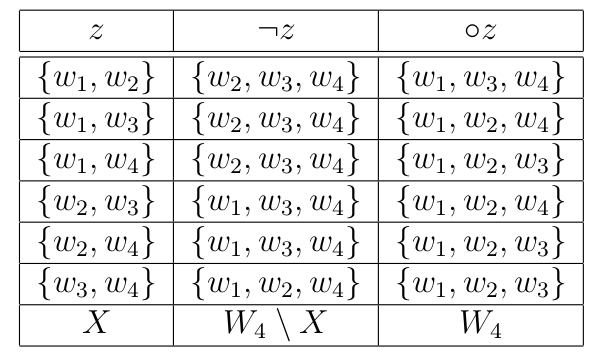}
    \caption{Another 16-elements paraconsistent BALFI for \rci}\label{f02}
    %\vspace*{-0.4cm}
\end{figure}
\end{center}

\end{exem}

\begin{exem}
We can offer now a model of \rmbc(\axcl) that does not satisfy axiom $(\axcf)$.  For this purpose  consider the BALFI $\B'''$ defined over the Boolean algebra $\A_4=\wp(\{w_1,w_2\}) = \{0, a,b,1\}$ according to Example~\ref{allBALFIs}, and  displayed in Figure~\ref{f03}. 
Observe that $\B''' \Vdash \axcl$. However, $\B'''$ is not a model of $(\axcf)$: $\neg\neg 0 = a \not\leq 0$.

\begin{center}
\begin{figure}[h!]
\vspace*{-0.25cm}
  \centering
    \includegraphics[width=0.2\textwidth]{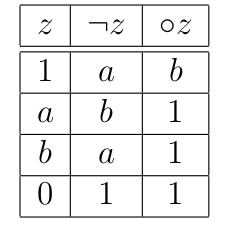}
    \caption{A paraconsistent BALFI model of \rmbc(\axcl) that does not satisfy axiom $(\axcf)$}\label{f03}
   %\vspace*{-0.8cm}
\end{figure}
\end{center}

%\begin{center}
%\begin{tabular}{|c|c|c|}
%\hline
%$z$ & $\neg z$ & $\cons z$ \\ \hline \hline
%$1$ & $a$ & $b$ \\ \hline
%$a$ & $b$ & $1$ \\ \hline
%$b$ & $a$ & $1$ \\ \hline
%$0$ & $1$ & $1$ \\ \hline
%\end{tabular}
%\end{center}
 
\end{exem}

\section{Limits for replacement while preserving paraconsistency} \label{limits}

In~\citet[Theorem~3.51]{CM} some sufficient conditions were given to show that certain extensions of \bc\ and \ci\ cannot satisfy replacement while being still paraconsistent. This result shows that there are limits, much before reaching classical logic \cpl,  for extending \rmbc\ while preserving paraconsistency. This result will  now  be applied   in order to obtain two important examples of \lfis\ that   cannot be extended with replacement at risk of  losing their character of paraconsistency.

The first example, to begin with,  is in fact  a family of 8,192 examples: 

\begin{exem} [Three-valued \lfis] \ \\  \label{3-valued} 
Recall the family of 8Kb three-valued \lfis\ introduced by Marcos in an unpublished draft,  and discussed in~\citet[Section~3.11]{CM} and in~\citet[Section~5.3]{CCM}. As it was observed in these references, the schema ${\neg(\alpha \wedge \neg\alpha)}$ is valid in all of these logics. In addition, all these logics are models of axioms $(\axci)$ and $(\axcf)$, i.e., they extend \ci~\citep[see, e.g.,][Theorem~130]{CCM}. 
But in~\citet[Theorem~3.51(ii)]{CM} it was proved that $(IpE)$ cannot hold in any paraconsistent extension of \ci\ in which the schema $\neg(\alpha \wedge \neg\alpha)$ is valid. As a consequence, the inference rules $(\Rn)$ and $(\Rb)$ cannot be added to any of them at the  risk of losing paraconsistency. Indeed, if {\bf L} is any of such three-valued logics, the corresponding logic  {\bf RL} obtained by adding both rules will derive the axiom schema $\cons\alpha$ (the proof of this fact can be easily adapted from the one for Theorem~3.51(ii) presented in~\cite{CM}). But then, the negation $\neg$ is explosive in {\bf RL}, by $(\axexp)$ and $(\MP)$. This shows that these three-valued \lfis, including the well-known da Costa and D'Ottaviano's logic \dacdot\ (and so all of its equivalent presentations, such as \lfium, {\bf CLuN} or \lptz), as well as Sette's logic \set, if extended by the inference rules proposed here,  will be no longer paraconsistent. Of course, this result is related to the one obtained in~\cite{avr:bez:17}, which states  that for no three-valued paraconsistent logic with implication the replacement property can hold.
\end{exem}

The second example deals with the well-known da Costa logic $C_1$.

\begin{exem} [da Costa's logic $C_1$] \ \\ \label{C1}
Newton da Costa introduced in~\cite{dC63} his famous hierarchy of paraconsistent systems $C_n$ (for $n \geq 1$), the first systematic study on paraconsistency in the literature. As discussed above, da Costa's approach was generalized through the notion of \lfis. The first and stronger system in da Costa's hierarchy is $C_1$, which is equivalent (up to language) with \cila. The logic \cila\ corresponds, with the notation introduced above, to $\mbc(\{\axci,\axcl,\axcf,\axca_\wedge,\axca_\vee,\axca_\to\})$. Now, let  \rcila\ be the logic  $\rmbc(\{\axci,\axcl,\axcf,\axca_\wedge,\axca_\vee,\axca_\to\})$; then, this logic derives $(\axwci)$. Indeed, as shown in~\citet[Proposition~3.1.10]{CC16}, axiom $(\axwci)$ is derivable from \mbc\ plus axiom $(\axci)$. This being so, by Example~\ref{def-rmbcciw} and the fact that $\bot \defin (\alpha \wedge \neg \alpha) \wedge \neg(\alpha \wedge \neg \alpha)$ is a bottom formula in \cila\ (hence in \rcila) for any $\alpha$ (i.e., $\bot$ implies any other formula), the connective $\cons$ could be eliminated from the signature of \rcila, and so the logic \rcila\ could be  defined over the signature $\Sigma_C$ (recall Definition~\ref{signa}). In that case, \rcila\ would coincide with ${\bf R}C_1$, the extension of $C_1$ by adding  the inference rule $(\Rn)$ (and where the notion of derivation is given as in Definition~\ref{deriv-RmbC}). The question is to find a model of \rcila\ (or, equivalently, of ${\bf R}C_1$) which is still paraconsistent.

In~\citet[Theorem~3.51(iv)]{CM} it was proved that $(IpE)$ cannot hold in any paraconsistent extension of \ci\ in which the schema $({\bf dm})$: $\neg(\alpha \wedge \beta) \to (\neg\alpha \vee \neg\beta)$ is valid. On the other hand, in~\citet[Theorem~3.6.4]{CC16} it was proved that the logic obtained from \mbcciw\ by adding $(\axca_\wedge)$ is equivalent to the logic obtained from \mbcciw\ by adding the schema axiom $({\bf dm})$. Since \rcila\ derives $(\axwci)$ and  $(\axca_\wedge)$, it also derives the schema  $({\bf dm})$. Finally, given that \rcila\ is an extension of \ci\ which satisfies $(IpE)$ and where $({\bf dm})$ is valid, it cannot be paraconsistent, by~\citet[Theorem~3.51(iv)]{CM}. 
\end{exem}

The failure of finding a paraconsistent extension of $C_1$ enjoying (IpE) was first observed in~\citet[Chapter~5, Theorem~9]{urbas}, by proving that the extension of $C_n$ by replacement collapses to classical logic (see also~\citet[Theorem~9]{urbas:89} and~\citet[Section~4]{sylvan}).

\section{Neighborhood semantics for  \rmbc} \label{sec:nbhd}

In this section we will introduce a particular case of BALFIs based on powerset Boolean algebras. These structures are  in fact equivalent to  neighborhood frames for non-normal modal logics, as we shall see in Section~\ref{non-normal}. 
Moreover, we shall prove in that Section that, with this semantics, \rmbc\ can be defined within the bimodal version of the {\em minimal modal logic}  {\bf E} (also termed {\em classical modal logic} in~\citet[Definition~8.1]{che:80}).

%Moreover, we shall prove in Section~\ref{non-normal} that, with this semantics, \rmbc\ is equivalent to the bimodal version of the {\em minimal modal logic}  {\bf E} (a.k.a. {\em classical modal logic}, see~\cite[Definition~8.1]{che:80}). 

\begin{defn} \label{neighframe} Let $W$ be a non-empty set. A {\em neighborhood frame} for \rmbc\ over $W$ is a triple $\matF=\langle W, S_\neg,S_\cons\rangle$ such that $S_\neg:\wp(W) \to \wp(W)$ and  $S_\cons:\wp(W) \to \wp(W)$ are functions. A {\em neighborhood model} for \rmbc\ over \matF\ is a pair $\matM=\langle \matF, d\rangle$ such that \matF\ is a neighborhood frame for \rmbc\ over $W$ and $d:\mathcal{V} \to \wp(W)$ is a (valuation) function (given a denumerable set $\mathcal{V}$ of propositional variables).
\end{defn}

\begin{defn} \label{defneighrmbc} Let $\matM=\langle \matF,d\rangle$ be a neighborhood model for \rmbc\ over $\matF=\langle W, S_\neg,S_\cons\rangle$. It induces a {\em denotation function} $\termvalue{\cdot}_\matM:For(\Sigma) \to \wp(W)$ defined recursively as follows (for simplicity, we will write $\termvalue{\varphi}$ instead of $\termvalue{\varphi}_\matM$ when \matM\ is clear from the context):\\[1mm]
\indent $\begin{array}{lll}
\termvalue{p} &=& d(p), \mbox{ if $p \in \mathcal{V}$};\\[2mm]
\termvalue{\varphi \wedge \psi} &=& \termvalue{\varphi}\cap \termvalue{\psi};\\[2mm]
\termvalue{\varphi \vee \psi} &=& \termvalue{\varphi}\cup \termvalue{\psi};\\[2mm]
\termvalue{\varphi \to \psi} &=& \termvalue{\varphi}\to \termvalue{\psi} = (W \setminus \termvalue{\varphi}) \cup \termvalue{\psi};\\[2mm]
\termvalue{\neg\varphi} &=& (W \setminus \termvalue{\varphi}) \cup S_\neg(\termvalue{\varphi}); \mbox{ and}\\[2mm]
\termvalue{\cons\varphi} &=& (W \setminus (\termvalue{\varphi} \cap \termvalue{\neg\varphi})) \cap S_\cons(\termvalue{\varphi}) = (W \setminus (\termvalue{\varphi} \cap S_\neg(\termvalue{\varphi}))) \cap S_\cons(\termvalue{\varphi}).
\end{array}$
\end{defn}

%\noindent
Clearly $\termvalue{\varphi} \cup \termvalue{\neg\varphi} = W$, but $\termvalue{\varphi} \cap \termvalue{\neg\varphi}$ is not necessarily empty.  In addition, $\termvalue{\varphi} \cap \termvalue{\neg\varphi} \cap \termvalue{\cons\varphi} = \emptyset$.
Intuitively, the functions $S_\neg$ and $S_\cons$ act as `regulators' for the values given to the non-standard connectives $\neg$ and $\cons$. Indeed, $S_\neg(\termvalue{\varphi})$ is the value to be `added' to the value of the classical negation of $\varphi$ (that is, the Boolean complement of $\termvalue{\varphi}$) in order to allow paraconsistency.
In fact, in \rmbcciw\ and its extensions the value of $\cons\varphi$ is the Boolean complement of the value of the contradiction $\varphi \land \neg \varphi$. However, in \rmbc\ the value of $\cons\varphi$ could be less than this, and thus it becomes `decreased' by intersection with $S_\cons(\termvalue{\varphi})$.

\begin{defn} \label{consneigh} Let $\matM=\langle \matF,d\rangle$ be a neighborhood model for \rmbc.\\
(1) We say that a  formula $\varphi$ is {\em valid} (or {\em true}) {\em in}  \matM, denoted by $\matM \Vdash \varphi$, if $\termvalue{\varphi} = W$.\\
(2) We say that a formula $\varphi$ is {\em valid w.r.t. neighborhood models}, denoted by $\models_\mathbb{NM} \varphi$, if  $\matM \Vdash \varphi$ for every neighborhood model \matM\ for \rmbc.\\
(3) The consequence relation $\models_\mathbb{NM}$ induced by neighborhood models for  \rmbc\ is defined as follows: $\Gamma \models_\mathbb{NM} \varphi$ if either  $\models_\mathbb{NM} \varphi$
or there exists a finite, non-empty subset $\{\gamma_1,\ldots,\gamma_n\}$ of $\Gamma$ such that ${ \models_\mathbb{NM}(\gamma_1 \wedge \ldots \wedge \gamma_n) \to \varphi}$.
\end{defn}

Clearly, $\Gamma \models_\mathbb{NM} \varphi$ if either $\termvalue{\varphi}_\matM = W$ for every neighborhood model $\matM$ for \rmbc, 
or there exists a finite, non-empty subset $\{\gamma_1,\ldots,\gamma_n\}$ of $\Gamma$ such that, for every neighborhood model \matM\ for \rmbc, $\bigcap_{i=1}^n\termvalue{\gamma_i}_\matM \subseteq  \termvalue{\varphi}_\matM$.

\begin{prop} \label{kripe-BALFI}
Given a neighborhood frame $\matF=\langle W, S_\neg,S_\cons\rangle$ for \rmbc\ let $\tilde{\neg},\tilde{\cons}:\wp(W) \to \wp(W)$ defined as follows: $\tilde{\neg}(X)=(W \setminus X) \cup S_\neg(X)$ and  $\tilde{\cons}(X)=(W \setminus (X \cap S_\neg(X)) \cap S_\cons(X)$.  Then $\B_\matF\defin\langle \wp(W),\cap,\cup,\to,\tilde{\neg},\tilde{\cons},\emptyset,W\rangle$ is a BALFI. Moreover, if $\matM=\langle \matF,d\rangle$ is a neighborhood model for \rmbc\ over $\matF=\langle W, S_\neg,S_\cons\rangle$ then the denotation function $\termvalue{\cdot}_\matM$ is a valuation over $\B_\matF$.
\end{prop}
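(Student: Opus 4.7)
The plan is to split the statement into two independent verifications: (i) that $\B_\matF$ satisfies the BALFI axioms of Definition~\ref{defBALFI}, and (ii) that $\termvalue{\cdot}_\matM$ is a homomorphism from $For(\Sigma)$ to $\B_\matF$.

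For part (i), I would first note that $\langle \wp(W),\cap,\cup,\to,\emptyset,W\rangle$ is the standard powerset Boolean algebra, with $X \to Y = (W\setminus X)\cup Y$; this needs no argument beyond invocation of a standard fact. The two remaining BALFI conditions must be checked for the defined operators $\tilde{\neg}$ and $\tilde{\cons}$. The excluded middle equation $X \cup \tilde{\neg}(X) = W$ is immediate from the definition $\tilde{\neg}(X) = (W\setminus X)\cup S_\neg(X)$, since $X \cup (W\setminus X) = W$. The gentle-explosion equation $X \cap \tilde{\neg}(X) \cap \tilde{\cons}(X) = \emptyset$ is the only computation of substance: I would first simplify
\[
X \cap \tilde{\neg}(X) \;=\; X \cap \bigl((W\setminus X)\cup S_\neg(X)\bigr) \;=\; X \cap S_\neg(X),
\]
since $X \cap (W\setminus X)=\emptyset$. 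Then, since $\tilde{\cons}(X) \subseteq W \setminus (X \cap S_\neg(X))$ by its very definition, intersecting with $X \cap S_\neg(X)$ yields $\emptyset$, as desired.

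For part (ii), I would verify by induction on the complexity of $\varphi$ that $\termvalue{\cdot}_\matM$ respects every operation of $\Sigma$. The base case $\varphi = p$ is immediate from $d:\mathcal{V}\to\wp(W)$. The clauses for $\wedge, \vee, \to$ are the inductive reading of Definition~\ref{defneighrmbc}, matched against the corresponding operations in $\B_\matF$. The clauses for $\neg$ and $\cons$ follow by rewriting: by the inductive hypothesis and the definition of $\tilde{\neg}$,
\[
\termvalue{\neg\varphi} \;=\; (W\setminus \termvalue{\varphi})\cup S_\neg(\termvalue{\varphi}) \;=\; \tilde{\neg}(\termvalue{\varphi}),
\]
and analogously $\termvalue{\cons\varphi} = \tilde{\cons}(\termvalue{\varphi})$ directly from the second form of the clause for $\cons$ in Definition~\ref{defneighrmbc}, which is already written precisely to match $\tilde{\cons}$.

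There is no real obstacle here; the statement is essentially a sanity check that the neighborhood clauses have been set up correctly to instantiate the abstract BALFI semantics. The only point that requires even a line of algebra is the reduction $X \cap \tilde{\neg}(X) = X \cap S_\neg(X)$, which makes clear why $\tilde{\cons}(X)$ is defined to subtract precisely $X \cap S_\neg(X)$ rather than $X \cap (W\setminus X) = \emptyset$: the paraconsistent `overlap' of $X$ and $\tilde{\neg}(X)$ has been absorbed into $S_\neg(X)$, and the consistency operator must annihilate exactly that overlap.
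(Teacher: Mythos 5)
Your proof is correct and follows the only natural route: the paper itself disposes of this proposition with ``It is immediate from the definitions,'' and your write-up is just the explicit unpacking of that remark (the key computation $X \cap \tilde{\neg}(X) = X \cap S_\neg(X)$ and the observation that $\tilde{\cons}(X)$ lives inside the complement of that set are exactly what ``immediate'' is hiding). Nothing is missing and nothing differs in substance from the paper's argument.
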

\begin{proof}
It is immediate from the definitions.
\end{proof}

\begin{coro}  [Soundness of \rmbc\ w.r.t.~neighborhood models] \label{soundneighrmbc} \ \\
If $\Gamma \vdash_{\rmbc} \varphi$ then $\Gamma \models_\mathbb{NM} \varphi$.
\end{coro}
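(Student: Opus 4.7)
The plan is to deduce this corollary directly from the BALFI soundness result (Theorem~\ref{sound-BALFI}) by viewing each neighborhood model as a particular BALFI together with a distinguished valuation. The crucial bridge is Proposition~\ref{kripe-BALFI}: for every neighborhood frame $\matF=\langle W,S_\neg,S_\cons\rangle$ the powerset algebra $\B_\matF=\langle\wp(W),\cap,\cup,\to,\tilde{\neg},\tilde{\cons},\emptyset,W\rangle$ is a BALFI, and for every neighborhood model $\matM=\langle\matF,d\rangle$ the denotation function $\termvalue{\cdot}_\matM$ is a valuation over $\B_\matF$. In particular, $\matM\Vdash\varphi$ iff $\termvalue{\varphi}_\matM=W$ iff $\termvalue{\cdot}_\matM(\varphi)=1_{\B_\matF}$.

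Given this bridge, I would proceed as follows. Suppose $\Gamma \vdash_{\rmbc} \varphi$. By Theorem~\ref{sound-BALFI} we have $\Gamma \models_{\mathbb{BI}} \varphi$, i.e., either (a)~$v(\varphi)=1$ for every valuation $v$ over every BALFI, or (b)~there is a finite non-empty subset $\{\gamma_1,\ldots,\gamma_n\}\subseteq\Gamma$ with $\bigwedge_{i=1}^n v(\gamma_i) \leq v(\varphi)$ for every valuation $v$ over every BALFI. Fix now an arbitrary neighborhood model $\matM=\langle\matF,d\rangle$ for \rmbc, and consider the BALFI $\B_\matF$ and the valuation $\termvalue{\cdot}_\matM$ supplied by Proposition~\ref{kripe-BALFI}. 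In case~(a), specializing to $v=\termvalue{\cdot}_\matM$ yields $\termvalue{\varphi}_\matM=W$, so $\models_\mathbb{NM}\varphi$ and hence $\Gamma\models_\mathbb{NM}\varphi$. In case~(b), the same specialization turns the inequality into $\bigcap_{i=1}^n \termvalue{\gamma_i}_\matM \subseteq \termvalue{\varphi}_\matM$, because in $\wp(W)$ the order $\leq$ is $\subseteq$ and the top element is $W$. Since $\matM$ was arbitrary, this is exactly the condition in Definition~\ref{consneigh}(3) (equivalently, $\models_\mathbb{NM}(\gamma_1\wedge\ldots\wedge\gamma_n)\to\varphi$), so $\Gamma\models_\mathbb{NM}\varphi$.

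There is no real obstacle: the corollary is essentially a routine specialization, since Proposition~\ref{kripe-BALFI} shows that neighborhood models constitute a (subclass of) BALFI semantics on powerset algebras. The only point that requires a line of care is the translation between the order-theoretic formulation of the local consequence relation $\models_{\mathbb{BI}}$ and its set-theoretic counterpart $\models_\mathbb{NM}$, which is handled by noting that $a\leq b$ in $\wp(W)$ means $a\subseteq b$ and that $1_{\B_\matF}=W$. Everything else is carried by the earlier BALFI soundness theorem and the fact that $\termvalue{\cdot}_\matM$ is a homomorphism over $\Sigma$.
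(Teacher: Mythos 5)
Your proof is correct and follows exactly the route the paper takes: the paper's own proof consists of the single observation that the corollary ``follows from soundness of \rmbc\ w.r.t.\ BALFI semantics (Theorem~\ref{sound-BALFI}) and by Proposition~\ref{kripe-BALFI}.'' You have simply spelled out the routine specialization (identifying $\leq$ with $\subseteq$ and $1_{\B_\matF}$ with $W$) that the paper leaves implicit.
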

\begin{proof}
It follows from soundness of \rmbc\ w.r.t.~BALFI semantics (Theorem~\ref{sound-BALFI}) and by Proposition~\ref{kripe-BALFI}.
\end{proof}

\

Proposition~\ref{kripe-BALFI} suggests the following :

\begin{defn} \label{valid-frame}
Let $\matF=\langle W, S_\neg,S_\cons\rangle$ be a neighborhood frame for \rmbc. A formula $\varphi$ is {\em valid in \matF} if  $\matM \Vdash \varphi$ for every neighborhood model $\matM=\langle \matF,d\rangle$  for \rmbc\ over \matF.
\end{defn}

In order to prove completeness of \rmbc\ w.r.t.~neighborhood models, Stone's representation theorem for Boolean algebras  will be used. This important theorem, first proved in~\cite{stone}, states that every Boolean algebra is isomorphic to a Boolean subalgebra of $\wp(W)$, for a suitable $W$. This means that, given a Boolean algebra $\A$, there exists a set $W$ and an injective homomorphism $i:\A \to \wp(W)$ of Boolean algebras. Note that $i(a)=W$ if and only if  $a= 1$.

\begin{teom} [Completeness of \rmbc\ w.r.t.~neighborhood models] \ \\ \label{BALFI-kripke} 
If $\Gamma \models_\mathbb{NM} \varphi$ then $\Gamma \vdash_{\rmbc} \varphi$.
\end{teom}
\begin{proof}
Let  $\mathcal{A}_{can}$ be the Boolean algebra with carrier  $A_{can}=For(\Sigma)/_{\equiv}$, and let  $\mathcal{B}_{can}$ be the corresponding expansion to $\Sigma_e$, as defined in the proof of Theorem~\ref{comple-BALFI}.  Let $i:\mathcal{A}_{can} \to \wp(W)$ be an injective homomorphism of Boolean algebras, according to Stone's theorem as discussed above. Consider the neighborhood frame  $\matF_{can}=\langle W, S_\neg,S_\cons\rangle$  for \rmbc\ such that the functions $S_\neg$ and $S_\cons$ satisfy the following:
$S_\neg(i([\alpha])) = i([\neg\alpha])$, and $S_\cons(i([\alpha])) = {i}([\cons\alpha])$, for every formula $\alpha$ (where $[\alpha]$ denotes the equivalence class of $\alpha$ w.r.t.~${\equiv}$). Observe that these functions are well-defined, since every connective in \rmbc\ is congruential and $i$ is injective. The values of these functions outside the image of $i$ can be arbitrary.
%The values of these functions outside the image of $i$ can be taken, for instance, as $S_\neg(X)=\emptyset$ and $S_\cons(X)=W$ if $X \notin i[A_{can}]$.
Now, let $\matM_{can}=\langle \matF_{can},d_{can}\rangle$ be the neighborhood model for \rmbc\  such that $d_{can}(p)\defin{i}([p])$, for every propositional variable $p$.\\[1mm]
{\bf Fact:} $\termvalue{\alpha}_{\matM_{can}}=i([\alpha])$, for every formula $\alpha$.\\[1mm] 
The proof of the Fact will be done by induction on the complexity of the formula $\alpha$ (recalling Definition~\ref{defneighrmbc} for $\termvalue{\cdot}_\matM$). The complexity of $\cons\alpha$ is taken to be stricty greater than the complexity of $\neg\alpha$. The case for $\alpha$ atomic or $\alpha=\beta \# \gamma$ for $\# \in \{\wedge,\vee,\to\}$ is clear, by the very definitions and by induction hypothesis. Now, suppose that $\alpha=\neg\beta$. By induction hypothesis, $\termvalue{\beta}=i([\beta])$. Observe that $\sneg[\beta] \leq [\neg\beta]$ in $\mathcal{A}_{can}$ (where $\sneg$ denotes the Boolean complement in $\mathcal{A}_{can}$), hence $W\setminus i([\beta]) = i(\sneg[\beta]) \subseteq i([\neg\beta])$. Thus, \\

$\termvalue{\neg\beta}= (W \setminus \termvalue{\beta}) \cup S_\neg(\termvalue{\beta})=(W \setminus i([\beta])) \cup S_\neg(i([\beta]))$\\

\hspace{0.8cm} $=(W \setminus i([\beta])) \cup i([\neg\beta]) = i([\neg\beta])$.\\

\noindent Finally, let $\alpha=\cons\beta$. Since $[\cons\beta] \leq \sneg([\beta] \wedge [\neg\beta])$ in $\mathcal{A}_{can}$ then $i([\cons\beta]) \subseteq W \setminus(i([\beta]) \cap i([\neg\beta]))$. Hence, by induction hypothesis,\\

$\termvalue{\cons\beta}= (W \setminus (i([\beta]) \cap i([\neg\beta]))) \cap S_\cons(i([\beta])$\\

\hspace{0.8cm} $=(W \setminus (i([\beta]) \cap i([\neg\beta]))) \cap i([\cons\beta]) = i([\cons\beta])$.\\

\noindent This concludes the proof of the Fact.

Because of the Fact, $\matM_{can} \Vdash \alpha$ iff $i([\alpha])=W$ iff $[\alpha]=1$ iff $\vdash_{\rmbc} \alpha$. Now, suppose that $\Gamma \models_\mathbb{NM} \varphi$. 
If $\models_\mathbb{NM} \varphi$ then, in particular, $\matM_{can} \Vdash \varphi$ and so $\vdash_{\rmbc} \varphi$. From this, $\Gamma \vdash_{\rmbc} \varphi$. On the other hand, suppose that there exists a finite, non-empty subset $\{\gamma_1,\ldots,\gamma_n\}$ of $\Gamma$ such that $\models_\mathbb{NM} (\gamma_1 \wedge \ldots \wedge \gamma_n) \to \varphi$. By reasoning as above, it follows that $\vdash_{\rmbc} (\gamma_1 \wedge \ldots \wedge \gamma_n) \to \varphi$ and so $\Gamma \vdash_{\rmbc} \varphi$ as well.
\end{proof}   

\subsection{\rmbc\ is definable within the minimal bimodal modal logic} \label{non-normal}

This  section  shows  that \rmbc\ is definable within the bimodal version of the  minimal modal logic  {\bf E}, also called  classical modal logic in~\citet[Definition~8.1]{che:80}. In terms of combination of modal logics, this bimodal logic is equivalent to the {\em fusion} (or, equivalently, the {\em constrained fibring} by sharing the classical connectives)  of {\bf E} with itself.\footnote{For the basic notions of combining logics the reader can consult~\cite{CC07} and \cite{car:etal:2008}.} This means that the minimal non-normal modal logic with two independent modalities $\square_1$ and  $\square_2$, which will be denoted by ${\bf E}{\oplus}{\bf E}$, contains \rmbc, the minimal self-extensional \lfi. As  we shall see, in the case of {\rmbc} both modalities are required for defining the two non-classical conectives $\neg$ and $\circ$. For {\rmbcciw} and its extensions only one modality is needed, since $\circ$ can be taken as a derived operator (recall Example~\ref{def-rmbcciw}). Hence {\rmbcciw} is definable in \textbf{E}.

The minimal (or classical) modal logic {\bf E} was introduced by~\cite{montague} and~\cite{scott} with semantics given by neighborhood models, which generalize the usual Kripke models for modal logic.
Neighborhood semantics has become one of the main mechanisms for providing semantics for non-normal modal logics as well as for some normal modal logics that are not complete w.r.t.~Kripke models \citep[see e.g.][]{gabbay1975normal}. Among the first important applications of neighborhood semantics we count the well-known book on counterfactual conditionals by \citet{d.lewis}. Ever since, it has found many successful applications in areas of conditional, deontic, and epistemic logic \citep[see, e.g.,][]{pac:2007}.
For convenience, the definition of modal logic {\bf E} will be briefly  surveyed below.

\begin{defn} [\cite{che:80}, Definition~7.1]
A {\em minimal model} is a triple $\mathcal{N} = \langle W, N, d \rangle$ such that $W$ is a non-empty set and $N:W \to \wp(\wp(W))$ and $d:\mathcal{V} \to \wp(W)$ are functions. The class of minimal models will be denoted by $\mathcal{C_M}$.
\end{defn}

Let $\Sigma_m=\{\land, \lor, \to, \sneg, \square, \lozenge\}$ and $\Sigma_{bm}=\{\land, \lor, \to, \sneg, \square_1,\lozenge_1,\square_2,\lozenge_2\}$ be the signatures introduced in  Definition~\ref{signa}. The class of models  $\mathcal{C_M}$ induces a modal consequence relation defined as follows:

\begin{defn} [\cite{che:80}, Definition~7.2] \label{denotmod}
Let $\mathcal{N}$ be a minimal model and $w \in W$. $\mathcal{N}$ is said to {\em satisfy a formula  $\varphi \in For(\Sigma_m)$   in $w$},
denoted by $\models^\mathcal{N}_w \varphi$, according to the following recursive definition (here $\termvalue{\varphi}^{\mathcal{N}}$ stands for the set $\{w \in W \ : \  \models^\mathcal{N}_w \varphi\}$, the {\em denotation} of $\varphi$ in $\mathcal{N}$):\\

$\begin{array}{ll}
1. & \mbox{ if $p$ is a propositional variable then $\models^\mathcal{N}_w p$ \ iff \ $w \in d(p)$};\\[2mm]
2. & \models^\mathcal{N}_w \sneg \alpha \ \mbox{ iff } \ \not\models^\mathcal{N}_w \alpha;\\[2mm]
3. & \models^\mathcal{N}_w \alpha \land \beta \ \mbox{ iff } \ \models^\mathcal{N}_w \alpha \ \mbox{ and } \ \models^\mathcal{N}_w \beta;\\[2mm]
4. & \models^\mathcal{N}_w \alpha \lor \beta \ \mbox{ iff } \ \models^\mathcal{N}_w \alpha \ \mbox{ or } \ \models^\mathcal{N}_w \beta;\\[2mm]
5. & \models^\mathcal{N}_w \alpha \to \beta \ \mbox{ iff } \ \not\models^\mathcal{N}_w \alpha \ \mbox{ or } \ \models^\mathcal{N}_w \beta;\\[2mm]
6. & \models^\mathcal{N}_w \square \alpha \ \mbox{ iff } \ \termvalue{\alpha}^{\mathcal{N}} \in N(w);\\[2mm]
7. & \models^\mathcal{N}_w \lozenge \alpha \ \mbox{ iff } \ (W \setminus\termvalue{\alpha}^{\mathcal{N}}) \notin N(w).
\end{array}$
\end{defn}

A formula $\varphi$ is true in  $\mathcal{N}$ if $\termvalue{\varphi}^{\mathcal{N}} = W$, and it is valid w.r.t. $\mathcal{C_M}$, denoted by $\models^\mathcal{C_M} \varphi$, if it is true in every minimal model. The degree-preserving consequence w.r.t. $\mathcal{C_M}$ can be defined analogously  to the one for neighborhood semantics for \rmbc\ given in Definition~\ref{consneigh}. Namely,
$\Gamma \models^\mathcal{C_M} \varphi$ if either $\models^\mathcal{C_M} \varphi$, 
or there exists a finite, non-empty subset $\{\gamma_1,\ldots,\gamma_n\}$ of $\Gamma$ such that $\models^\mathcal{C_M}(\gamma_1 \wedge \ldots \wedge \gamma_n) \to \varphi$. The latter is equivalent to  $\bigcap_{i=1}^n \termvalue{\gamma_i}^{\mathcal{N}} \subseteq \termvalue{\varphi}^{\mathcal{N}}$.

\begin{defn}  [\cite{che:80}, Definition~8.1] \label{logiE}
The {\em minimal modal logic} (or {\em  classical modal logic})  {\bf E} is defined by means of a Hilbert calculus over the signature $\Sigma_m$ obtained by adding to the Hilbert calculus for \cplp\ (recall Definition~\ref{hilCPLP}) the following axiom schemas and rules:

  \begin{gather}
    \alpha \lor \sneg \alpha                        \tag{\axpem}\\[2mm]
    \alpha \imp \big(\sneg \alpha \imp \beta\big)
                                                     \tag{\axexpl}\\[2mm]
    \lozenge\alpha \leftrightarrow \sneg\square\sneg\alpha
                                                     \tag{\axmod}\\[2mm]
    \frac{\alpha \sse \beta}{\square\alpha \sse \square\beta}  \tag{\RE}                                                    
  \end{gather}
\end{defn}

\

The notion of derivation in {\bf E} is defined as for \rmbc, recall Definition~\ref{deriv-RmbC}.
Note that (\axpem) and (\axexpl), together with \cplp, guarantee that {\bf E} is an expansion of propositional classical logic by adding the modalities $\square$ and $\lozenge$ which are interdefinable as usual, and such that both are congruential. That is, {\bf E} satisfies replacement.

\begin{teom}  [\cite{che:80}, Section~9.2]
The logic {\bf E} is sound and complete w.r.t. the semantics in  $\mathcal{C_M}$, namely: $\Gamma \vdash_{\bf E} \varphi$ \ iff \ $\Gamma \models^\mathcal{C_M} \varphi$.
\end{teom}

\begin{defn} [Minimal bimodal logic] The {\em minimal bimodal logic} ${\bf E} {\oplus} {\bf E}$  is defined by means of a Hilbert calculus over signature $\Sigma_{bm}$ obtained by adding to the Hilbert calculus for \cplp\ the following axiom schemas and rules, for $i=1,2$:

  \begin{gather}
    \alpha \lor \sneg \alpha                        \tag{\axpem}\\[2mm]
    \alpha \imp \big(\sneg \alpha \imp \beta\big)
                                                     \tag{\axexpl}\\[2mm]
    \lozenge_i\alpha \leftrightarrow \sneg\square_i\sneg\alpha
                                                     \tag{\axmod$_i$}\\[2mm]
    \frac{\alpha \sse \beta}{\square_i\alpha \sse \square_i\beta}  \tag{\RE$_i$}                                                    
  \end{gather}
\end{defn}

\

Observe that ${\bf E} {\oplus} {\bf E}$ is obtained from {\bf E} by `duplicating' the modalities. There is no relationship between $\square_1$ and $\square_2$ and so $\lozenge_1$ and $\lozenge_2$ are also independent. 

The semantics of ${\bf E} {\oplus} {\bf E}$ is given by the class $\mathcal{C'_M}$ of structures of the form $\mathcal{N} = \langle W, N_1,N_2,d \rangle$ such that $W$ is a non-empty set and $N_i:W \to \wp(\wp(W))$ (for $i=1,2$) and $d:\mathcal{V} \to \wp(W)$ are functions. The denotation $\termvalue{\varphi}^{\mathcal{N}}$ of a formula $\varphi \in For(\Sigma_{bm})$ in $\mathcal{N}$ is defined by an obvious adaptation of Definition~\ref{denotmod} to $For(\Sigma_{bm})$. By  defining the consequence relations $\vdash_{{\bf E} {\oplus} {\bf E}}$ and 
$\models^\mathcal{C'_M}$ in analogy to the ones for {\bf E}, it is straightforward to adapt the proof of soundness and completeness of {\bf E} to the bimodal case. 

\begin{teom} \label{compleE+E}
The logic ${\bf E} {\oplus} {\bf E}$ is sound and complete w.r.t. the semantics in  $\mathcal{C'_M}$, namely: $\Gamma \vdash_{{\bf E} {\oplus} {\bf E}} \varphi$ \ iff \ $\Gamma \models^\mathcal{C'_M} \varphi$.
\end{teom}

From the point of view of combining logics,  ${\bf E} {\oplus} {\bf E}$ is the fusion (or, equivalently, the constrained fibring by sharing the classical connectives)  of {\bf E} with itself \citep[see][]{CC07,car:etal:2008}.

Finally, it will be shown that \rmbc\ can be defined inside ${\bf E}{\oplus} {\bf E}$ by means of the following abbreviations, which are motivated by Definition~\ref{defneighrmbc} (having in mind that $\square_1$ and $\square_2$ will be interpreted by the mappings $S_\neg$ and $S_\circ$, respectively):
$$\neg \varphi \defin \varphi \to \square_1 \varphi \ \mbox{ and } \ \cons \varphi \defin \sneg(\varphi \land \square_1 \varphi) \land   \square_2 \varphi.$$
In order to see this, observe that any function $N:W \to \wp(\wp(W))$ induces a unique function $S:\wp(W) \to \wp(W)$ given by $S(X) = \{w \in W \ : \ X \in N(w)\}$. Conversely, any function $S:\wp(W) \to \wp(W)$ induces a function $N:W \to \wp(\wp(W))$ given by $N(w) = \{X \subseteq W \ : \ w \in S(X)\}$. Both assignments $N \mapsto S$ and $S \mapsto N$ are  inverses of each other. From this, a structure (or minimal model) $\mathcal{N} = \langle W, N_1,N_2, d \rangle$ for  ${\bf E} {\oplus} {\bf E}$ can be transformed into a neighborhood model $\matM=\langle W, S_\neg,S_\cons,d\rangle$ for \rmbc\ such that $S_\neg$ and $S_\cons$ are  obtained, respectively, from the functions  $N_1$ and $N_2$ as indicated above. Observe that
$$w \in  \termvalue{\square_1\varphi}^{\mathcal{N}} \ \mbox{ iff } \ 
\models^\mathcal{N}_w \square_1 \varphi \ \mbox{ iff } \ \termvalue{\varphi}^{\mathcal{N}} \in N_1(w)  \ \mbox{ iff } \ w \in S_\neg(\termvalue{\varphi}^{\mathcal{N}}).$$
That is, $S_\neg(\termvalue{\varphi}^{\mathcal{N}}) = \termvalue{\square_1\varphi}^{\mathcal{N}}$. Analogously, $S_\cons(\termvalue{\varphi}^{\mathcal{N}}) = \termvalue{\square_2\varphi}^{\mathcal{N}}$. From this, it is easy to prove by induction on the complexity of the formula $\varphi \in For(\Sigma)$ that $\termvalue{\varphi}_{\mathcal{M}}=\termvalue{\varphi^t}^{\mathcal{N}}$, where $\varphi^t$ is the formula over the signature $\Sigma_{bm}$ obtained from  $\varphi$ by replacing any ocurrence of the connectives $\neg$ and $\cons$ by the corresponding abbreviations, as indicated above.  Conversely, any neighborhood model $\matM=\langle W, S_\neg,S_\cons,d\rangle$ for \rmbc\ gives origin to a unique minimal model $\mathcal{N} = \langle W, N_1,N_2, d \rangle$ for  ${\bf E} {\oplus} {\bf E}$ such that $\termvalue{\varphi}_{\mathcal{M}}=\termvalue{\varphi^t}^{\mathcal{N}}$ for every formula $\varphi \in For(\Sigma)$. That is, the class of  minimal models for  ${\bf E} {\oplus} {\bf E}$  coincides (up to presentation) with the class of neighborhood models for \rmbc, and both classes validate the same formulas over the signature $\Sigma$ of \rmbc. From this,  Corollary~\ref{soundneighrmbc}, Theorem~\ref{BALFI-kripke} and Theorem~\ref{compleE+E} we show that \rmbc\ is definable within ${\bf E} {\oplus} {\bf E}$:

\begin{teom} \label{RmbC-ExE} The logic \rmbc\ is definable within ${\bf E} {\oplus} {\bf E}$, in the following sense: $\Gamma \vdash_{\rmbc} \varphi$ \ iff \ $\Gamma^t \vdash_{{\bf E} {\oplus} {\bf E}} \varphi^t$ for every $\Gamma \cup \{\varphi\} \subseteq For(\Sigma)$, where $\Gamma^t=\{\psi^t \ : \ \psi \in \Gamma\}$. 
\end{teom}

The main result  obtained in this section, namely Theorem~\ref{RmbC-ExE}, establishes an interesting relation between non-normal modal logics and paraconsistent logics. 

Connections between modalities and paraconsistency, or  proposals emphasizing a connection between  negations, denial, and impossibility, are well-known in the literature. \citet[p.~128]{sege82} suggests the possibility of studying the (unexplored at that time) modal notion of `$\varphi$ is non-necessary', namely $\sneg\square\varphi$. Along the same lines,  \citet{dosen:84} investigates modal operators corresponding to impossibility and non-necessity in systems analogous to  the modal system {\bf K} based on intuitionistic (instead of classical) logic. A system where impossibility is equivalent to intuitionistic negation is also investigated (proving soundness and completeness).
\citet{dosen:86} investigates negation as a  modal impossibility  operator added to a negationless logic. In turn, \cite{vakarelov:1989}\footnote{This is a summary of Vakarelov's PhD thesis ``Theory of negation in certain logical systems. Algebraic and semantical approach'', University of Warsaw, 1976.}
builds upon a distributive (lattice) logic to introduce a class of negations (generalizing intuitionistic negation) termed \textit{regular} negations by drawing upon the notion of theory consistency/compatibility. Some convenient semantical simplifications giving rise to the special subclasses of \textit{normal} and \textit{standard} negations are also discussed. In an analogous manner, by drawing upon the dual notion of theory completeness, Vakarelov introduces the dual class of \textit{co-regular} (as well as \textit{co-normal} and \textit{co-standard}) negations. Vakarelov also discusses paraconsistent and paracomplete negations and relates them to particular subclasses of regular and co-regular logics. Along similar lines, \citet{dunn2005negation} (drawing upon \citet{dunn1993star}) study two families of negative modal operators interpreted as impossibility and (dually) as non-necessity, each represented in an ordered structure termed ``(dual) kite of negations''.

More recently, \cite{bez:2002,bez:2005} proposes to  examine a paraconsistent negation defined in the modal system {\bf S5} as $\neg \varphi \defin \lozenge \sneg \varphi$  (which is equivalent to $\sneg\square\varphi$). Historically situated between the early works by Do\v{s}en and Vakarelov mentioned above and B\'eziau's proposal, a similar way of defining a paraconsistent negation inside a modal logic  has been regarded in~\citet{deAra:alv:guer:87}, where the authors propose a Kripke-style semantics for Sette's three-valued paraconsistent logic {\bf P1} based on Kripke frames for the modal logic {\bf T}. This result was improved in~\cite{car:lim:99}, by showing that  {\bf P1} can be interpreted in {\bf T} by means of Kripke frames  having at most two worlds.

Several authors have explored the possibility of defining such paraconsistent negation in other modal logics such as {\bf B} \citep{avr:16}, {\bf S4} \citep{CP17} and even weaker modal systems \citep{bue:car:17}. In such context,  \cite{mar:05b} proposes, besides the paraconsistent negation defined as above, the definition of a consistency connective within a modal system by means of the formula $\cons\varphi\defin \varphi \to \square\varphi$. In that paper it is shown  that any normal  modal logic in which the schema $\varphi \to \square\varphi$ is not valid gives origin to an \lfi\ in this way. Moreover, it is shown that it is also possible to start from a ``modal \lfi'', over the signature $\Sigma$ of \lfis, in which the paraconsistent negation and the consistency connective feature a Kripke-style semantics, defining the modal necessity operator by means of the formula $\square\varphi \defin \sneg\wneg\varphi$ (where $\sneg$ is the strong negation defined in terms of $\wneg$ and $\cons$ as in \mbc, recall Remark~\ref{obsRmbC-ext}~(2)). This shows that `reasonable' normal modal logics and \lfis\ with Kripke-style semantics are two faces of the same coin. 

Our Theorem~\ref{RmbC-ExE} partially extends this relationship to the realm of non-normal modal logics. The result we have obtained is partial, in the sense that the minimum bimodal non-normal modal logics gives origin to \rmbc, but the converse does not seem to be true. Namely, starting from \rmbc\ it is not obvious that the modalities $\square_1$ and $\square_2$ could be defined  by means of formulas in the signature $\Sigma$. This topic deserves further investigation.

\subsection{Neighborhood models for axiomatic extensions of \rmbc} \label{kripke-extens}

In this section $Ax$ will denote a set formed by one or more of the axiom schemas introduced in Definition~\ref{exte-mbc} with the exception of (\axca$_\#$) for $\# \in \{\land, \vee, \to\}$ (given the limitations to paraconsistency imposed to logic \rcila\ (recall Example~\ref{C1}).
Let $\mathbb{NM}(Ax)$ the class of neighborhood frames in which every schema in $Ax$ is valid. Define the consequence relation $\models_{\mathbb{NM}(Ax)}$ in the obvious way. By adapting the previous results it is easy to prove the following:

\begin{teom} {\em (Soundness and completeness of $\rmbc(Ax)$ w.r.t. $\mathbb{\mathbb{NM}}(Ax)$)}  \label{adeq-ext-Kripke} Let $\Gamma \cup \{\varphi\} \subseteq For(\Sigma)$. Then: $\Gamma \vdash_{\rmbc(Ax)} \varphi$ \ if and only if $\Gamma \models_{\mathbb{NM}(Ax)} \varphi$.
\end{teom}

The class of neighborhood frames that validates each of the axioms of $Ax$ can be easily characterized:
  
\begin{prop} \label{model-Kripke-axioms}
Let  \matF\ be a neighborhood frame for \rmbc.
Then:\\[1mm]
(1) $(\axwci)$ is valid in \matF\  \ iff \ $W \setminus (X \cap S_\neg(X)) \subseteq S_\cons(X)$, for every $X\subseteq W$;\\[1mm]
(2)  $(\axci)$ is valid in \matF\ \ iff \ $W \setminus (X \cap S_\neg(X)) \subseteq S_\cons(X)\setminus S_\neg((W \setminus (X \cap S_\neg(X))) \cap S_\cons(X))$, for every $X\subseteq W$;\\[1mm]
(3) $(\axcl)$ is valid in \matF\ \ iff \ $S_\neg(X \cap S_\neg(X)) \subseteq W \setminus (X \cap S_\neg(X)) \subseteq S_\cons(X)$, for every $X\subseteq W$;\\[1mm]
(4) $(\axcf)$ is valid in \matF\ \ iff \ $(X\setminus S_\neg(X)) \cup S_\neg(X\to S_\neg(X)) \subseteq X$, for every $X\subseteq W$;\\[1mm]
(5) $(\axce)$ is valid in \matF\ \ iff \ $X\subseteq (X\setminus S_\neg(X)) \cup S_\neg(X\to S_\neg(X))$, for every $X\subseteq W$.
\end{prop}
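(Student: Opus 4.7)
The general strategy is uniform: for each axiom $\varphi$ I unfold the denotation $\termvalue{\varphi}$ in an arbitrary neighborhood model $\matM = \langle \matF, d\rangle$ using Definition~\ref{defneighrmbc}, then use the fact that $\varphi$ is valid in $\matF$ iff $\termvalue{\varphi}_{\matM} = W$ for every such $\matM$ (Definition~\ref{valid-frame}). Since the schema is a substitution schema and $d(p)$ may be chosen as any subset of $W$, the validity of the instance in which $\alpha$ is a single propositional variable already forces the condition on every $X\subseteq W$; conversely, replacement of $\alpha$ by an arbitrary formula just replaces $X$ by the denotation of that formula, so the universal set-theoretic condition is equivalent to validity of the schema. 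Throughout, I will abbreviate $\termvalue{\alpha}=X$ and use the basic identity $\termvalue{\alpha\land\neg\alpha}=X\cap S_\neg(X)$, obtained since $\termvalue{\neg\alpha}=(W\setminus X)\cup S_\neg(X)$ and hence $X\cap\termvalue{\neg\alpha}=X\cap S_\neg(X)$.

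For (1), the axiom $(\axwci)$ unfolds to the requirement $\bigl((W\setminus(X\cap S_\neg(X)))\cap S_\cons(X)\bigr)\cup (X\cap S_\neg(X)) = W$. Setting $Y = X\cap S_\neg(X)$ and observing that $\bigl((W\setminus Y)\cap S_\cons(X)\bigr)\cup Y = W$ iff $W\setminus Y \subseteq S_\cons(X)$, one gets the stated condition. For (4) and (5), I first compute $\termvalue{\neg\neg\alpha}=(X\setminus S_\neg(X))\cup S_\neg(X\to S_\neg(X))$, using the Boolean identity $W\setminus\bigl((W\setminus X)\cup S_\neg(X)\bigr)=X\setminus S_\neg(X)$, where $X\to Z$ abbreviates $(W\setminus X)\cup Z$. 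Then $(\axcf)$, i.e.\ $\neg\neg\alpha\to\alpha$, is valid iff $\termvalue{\neg\neg\alpha}\subseteq X$ for all $X$, which is (4); and $(\axce)$ is valid iff $X\subseteq\termvalue{\neg\neg\alpha}$, giving (5).

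For (3), $(\axcl)=\neg(\alpha\land\neg\alpha)\to\cons\alpha$, I write $Y=X\cap S_\neg(X)$ and obtain $\termvalue{\neg(\alpha\land\neg\alpha)}=(W\setminus Y)\cup S_\neg(Y)$ and $\termvalue{\cons\alpha}=(W\setminus Y)\cap S_\cons(X)$. The inclusion of the former in the latter splits into two inclusions: $W\setminus Y\subseteq(W\setminus Y)\cap S_\cons(X)$ (equivalently $W\setminus Y\subseteq S_\cons(X)$) and $S_\neg(Y)\subseteq(W\setminus Y)\cap S_\cons(X)$; noting that the first forces $S_\cons(X)\supseteq W\setminus Y$, the second reduces to $S_\neg(Y)\subseteq W\setminus Y$, giving the stated double inclusion. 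For (2), $(\axci)=\neg\cons\alpha\to(\alpha\land\neg\alpha)$, write $Z=\termvalue{\cons\alpha}=(W\setminus Y)\cap S_\cons(X)$. Then $\termvalue{\neg\cons\alpha}=(W\setminus Z)\cup S_\neg(Z)$ and the validity condition is $(W\setminus Z)\cup S_\neg(Z)\subseteq Y$. This splits as $W\setminus Z\subseteq Y$ (equivalent to $W\setminus Y\subseteq Z$, i.e.\ $W\setminus Y\subseteq S_\cons(X)$) together with $S_\neg(Z)\subseteq Y$ (equivalent to $W\setminus Y\subseteq W\setminus S_\neg(Z)$). Intersecting these yields exactly $W\setminus(X\cap S_\neg(X))\subseteq S_\cons(X)\setminus S_\neg\bigl((W\setminus(X\cap S_\neg(X)))\cap S_\cons(X)\bigr)$.

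The only nontrivial bookkeeping is in case (2), where the nested application of $S_\neg$ to the denotation of $\cons\alpha$ makes the resulting condition look opaque; but once one decomposes the inclusion $(W\setminus Z)\cup S_\neg(Z)\subseteq Y$ into its two pieces and recognises that one piece implies $W\setminus Y\subseteq Z$, the whole reduces to the single compound inclusion stated in the proposition. All the other cases are direct Boolean manipulations, so the formal verification in each case is essentially a short calculation.
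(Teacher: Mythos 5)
Your proposal is correct, and it follows the only natural route: unfold the denotations from Definition~\ref{defneighrmbc}, reduce validity of each schema to a set inclusion holding for every denotation, and observe that arbitrary subsets $X\subseteq W$ arise as denotations of a propositional variable, exactly mirroring the argument the paper gives for the BALFI analogue (Proposition~\ref{models-axioms}); the paper itself states Proposition~\ref{model-Kripke-axioms} without proof. I checked the Boolean bookkeeping in all five cases, including the decomposition of the inclusion in case (2) into $W\setminus Z\subseteq Y$ and $S_\neg(Z)\subseteq Y$ and the reduction of case (3) to the stated double inclusion, and it is all sound.
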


Recall the minimal bimodal logic ${\bf E} {\oplus} {\bf E}$  studied in Section~\ref{non-normal}. 
If $\varphi$ is a formula in $For(\Sigma_{bm})$ then ${\bf E} {\oplus} {\bf E}(\varphi)$ will denote the  extension of ${\bf E} {\oplus} {\bf E}$ by adding $\varphi$  as an axiom schema. Let $\mathcal{C'_M}(\varphi)$ be the class of structures (i.e., minimal models) $\mathcal{N}$ for ${\bf E} {\oplus} {\bf E}$ such that $\varphi$ is valid in $\mathcal{N}$  (as an axiom schema). Theorem~\ref{compleE+E} can be extended to prove that the logic ${\bf E} {\oplus} {\bf E}(\varphi)$ is sound and complete w.r.t. the semantics in  $\mathcal{C'_M}(\varphi)$. From this, and taking into account the representability of \rmbc\ within ${\bf E} {\oplus} {\bf E}$  (Theorem~\ref{RmbC-ExE}) and the equivalence between minimal models for  ${\bf E} {\oplus} {\bf E}$ and neighborhood models for \rmbc\ discussed right before Theorem~\ref{RmbC-ExE}, Proposition~\ref{model-Kripke-axioms} can be recast as follows:

\begin{coro} \label{coro-Kripke-axioms} \ \\
(1) $\rmbc(\axwci)$  is definable in ${\bf E} {\oplus} {\bf E}(\sneg(\varphi \land \square_1 \varphi) \to \square_2\varphi)$;\\[1mm]
(2) $\rmbc(\axci)$  is definable in ${\bf E} {\oplus} {\bf E}(\sneg(\varphi \land \square_1\varphi) \to	(\square_2\varphi \land \sneg\square_1(\sneg(\varphi \land \square_1\varphi) \land \square_2\varphi)))$
or, equivalenty, in
${\bf E} {\oplus} {\bf E}((\square_2\varphi \to\square_1(\sneg(\varphi \land \square_1\varphi) \land \square_2\varphi)) \to (\varphi \land \square_1\varphi))$;\\[1mm]
(3) $\rmbc(\axcl)$  is definable in 
${\bf E} {\oplus} {\bf E}((\square_1(\varphi \land \square_1\varphi) \to \sneg(\varphi \land \square_1\varphi)) \land (\sneg(\varphi \land \square_1\varphi) \to \square_2\varphi))$;\\[1mm]
(4) $\rmbc(\axcf)$ is definable in  
${\bf E} {\oplus} {\bf E}(((\varphi \land \sneg\square_1\varphi) \vee \square_1(\varphi \to \square_1\varphi)) \to \varphi)$;\\[1mm]
(5) $\rmbc(\axce)$ is definable in  
${\bf E} {\oplus} {\bf E}(\varphi \to ((\varphi \land \sneg\square_1\varphi) \vee \square_1(\varphi \to\square_1\varphi)))$.
\end{coro}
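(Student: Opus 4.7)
The plan has three ingredients: extend Theorem~\ref{RmbC-ExE} to axiomatic extensions, translate each of the five schemas of Definition~\ref{exte-mbc} under the abbreviations $\neg\varphi \defin \varphi \to \square_1 \varphi$ and $\cons\varphi \defin \sneg(\varphi \land \square_1\varphi) \land \square_2\varphi$ used in Section~\ref{non-normal}, and then simplify the resulting modal formulas by purely classical manipulations to match the schemas claimed in items~(1)--(5).

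First, I would establish that for any set $Ax \subseteq For(\Sigma)$ of schemas,
\[\Gamma \vdash_{\rmbc(Ax)} \varphi \quad \text{iff} \quad \Gamma^t \vdash_{{\bf E}\oplus{\bf E}(Ax^t)} \varphi^t,\]
where $Ax^t = \{\psi^t : \psi \in Ax\}$. This follows from Theorem~\ref{adeq-ext-Kripke}, the straightforward extension of Theorem~\ref{compleE+E} to ${\bf E}\oplus{\bf E}(Ax^t)$, and the natural bijection between neighborhood frames $\langle W, S_\neg, S_\cons\rangle$ for \rmbc\ and pairs of neighborhood functions $\langle N_1, N_2\rangle$ for ${\bf E}\oplus{\bf E}$ (via $S_i(X)=\{w : X \in N_i(w)\}$) discussed just before Theorem~\ref{RmbC-ExE}, which preserves the denotation along the translation, i.e.~$\termvalue{\psi}_\matM=\termvalue{\psi^t}^\mathcal{N}$.

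Second, for each $\psi \in Ax$ I would write $P \defin \varphi \land \square_1\varphi$ and $Q \defin \square_2\varphi$, noting that $(\alpha \land \neg\alpha)^t \equiv P$ and $(\cons\alpha)^t = \sneg P \land Q$. The congruentiality rule $(\RE_1)$ permits substituting $\alpha \land (\alpha \to \square_1\alpha)$ by $\alpha \land \square_1\alpha$ even inside $\square_1$. Then $(\axwci)^t$ is $(\sneg P \land Q) \lor P$, which simplifies classically to $P \lor Q \equiv \sneg P \to Q$, giving~(1). For $(\axcl) = \neg(\alpha \land \neg\alpha) \to \cons\alpha$, the outer $\neg$ unfolds to $P \to \square_1 P$, and $(\axcl)^t \equiv (P \to \square_1 P) \to (\sneg P \land Q) \equiv (P \land \sneg \square_1 P) \lor (\sneg P \land Q)$, which a case analysis shows is classically equivalent to $(\square_1 P \to \sneg P) \land (\sneg P \to Q)$, matching~(3). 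The cases $(\axcf)$ and $(\axce)$ are dual: both translate to straightforward variants of $((\varphi \land \sneg\square_1\varphi) \lor \square_1(\varphi \to \square_1\varphi)) \leftrightarrow \varphi$, giving~(4) and~(5).

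The main obstacle is item~(2), the case of $(\axci) = \neg\cons\alpha \to (\alpha \land \neg\alpha)$, where $\neg$ is applied to a compound involving both $\square_1$ and $\square_2$. Setting $C \defin \sneg P \land Q$, one obtains $(\neg\cons\alpha)^t \equiv C \to \square_1 C$, so $(\axci)^t \equiv ((\sneg P \land Q) \to \square_1 C) \to P$. The classical tautology $(\sneg P \land Q) \to X \equiv P \lor (Q \to X)$ reduces this to $(P \lor (Q \to \square_1 C)) \to P$, which further collapses to $(Q \to \square_1 C) \to P$, i.e., the second form in~(2); the first form is its classical contrapositive rearrangement $\sneg P \to (Q \land \sneg \square_1 C)$. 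The hard part is not any single step but the careful bookkeeping of the nested $\square_1$-occurrences (including a $\square_1$ applied to a formula combining $\square_1$ and $\square_2$) and the repeated invocation of $(\RE_1)$ to justify the classical simplifications; once these are in place, Step~1 yields the claimed representability results.
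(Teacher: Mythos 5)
Your proposal is correct and follows essentially the same route as the paper: invoke the representability of \rmbc\ in ${\bf E}\oplus{\bf E}$ (Theorem~\ref{RmbC-ExE}) together with the frame correspondence and the completeness results for the axiomatic extensions, then check that each modal schema listed is classically equivalent (using the congruentiality rule $\RE_1$ to normalize formulas under $\square_1$) to the translation of the corresponding \lfi\ axiom under $\neg\varphi \defin \varphi\to\square_1\varphi$ and $\cons\varphi\defin\sneg(\varphi\land\square_1\varphi)\land\square_2\varphi$. The paper phrases this as a recasting of the frame conditions of Proposition~\ref{model-Kripke-axioms}, whereas you work directly with the syntactic translations, but the two amount to the same verification and your classical simplifications (including the bookkeeping for the nested $\square_1$ in item~(2)) all check out.
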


We have, in fact, that $\rmbc(\axwci)$ and its extension $\rmbc(\axci)$ are definable in \textbf{E}. Recall from Example~\ref{def-rmbcciw} that for them $\circ$ can be defined as a derived operator.

\section{Truth-preserving (or global) semantics} \label{truth-pres}

As  mentioned in Section~\ref{sect-rmbc}, the BALFI semantics for \rmbc, as well as its  neighborhood semantics presented in Section~\ref{sec:nbhd}, is degree-preserving instead of truth-preserving (using the terminology from~\cite{DegTru}). This requires adapting, in a coherent way, the usual definition of derivation from premises in a Hilbert calculus (recall Definition~\ref{deriv-RmbC}).
This is in fact the methodology adopted with most normal modal logics in which the semantics is  local, thus recovering the deduction metatheorem. But it is also possible to consider global (or truth-preserving) semantics, as it is usually done with algebraic semantics. This leads us to propose the logic $\rmbc^*$, which is defined by the same Hilbert calculus than the one for \rmbc, but now derivations from premises in  $\rmbc^*$  are defined as usual in Hilbert calculi.

\begin{defn} \label{rules1} The logic $\rmbc^*$ is defined by the same Hilbert calculus over signature $\Sigma$ than \rmbc, that is, by adding to  \mbc\ the inference rules (\Rn) and (\Rb).
\end{defn}

\begin{defn} [Derivations in  $\rmbc^*$] \label{der-global}
We say that a formula $\varphi$ is {\em derivable in $\rmbc^*$ from $\Gamma$}, and we write $\Gamma\vdash_{\rmbc^*}\varphi$, if there exists a finite sequence of formulas
$\varphi_1\ldots \varphi_n$ such that $\varphi_n$ is $\varphi$ and, for every $1 \leq i \leq n$, either  $\varphi_i$ is an instance of an axiom schema of \rmbc, or $\varphi_i \in \Gamma$, or  $\varphi_i$ is the consequence of some inference rule of \rmbc\  whose premises appear in the sequence $\varphi_1\ldots \varphi_{i-1}$.
\end{defn}

Now, the degree-preserving BALFI semantics for \rmbc\ given in Definition~\ref{degBALFI} is replaced by a truth-preserving consequence relation for  $\rmbc^*$. Notice however that, by virtue of the self-extensionality, this semantical notion will not require finitariness by definition: indeed, after proving soundness and completeness, finitariness of the semantical consequence relation (that is, the semantical compactness theorem for $\rmbc^*$) will be obtained as a corollary.

\begin{defn} [Truth-preserving BALFI semantics] \ \\ \label{truBALFI}
Let $\Gamma \cup\{\varphi\}$ be a set of formulas in $For(\Sigma)$. We say that $\varphi$ is a {\em truth-preserving} (or {\em global}) consequence of $\Gamma$ in the variety $\mathbb{BI}$ of BALFIs, denoted by $\Gamma \models^g_{\mathbb{BI}} \varphi$, if, for every BALFI \B\ and every valuation $v$ over it,  $v(\varphi)=1$ whenever $v(\gamma)=1$ for every $\gamma \in \Gamma$.
\end{defn}

\begin{teom} {\em (Soundness and completeness of $\rmbc^*$ w.r.t. truth-preserving semantics)}  \label{SCRmbC*}
For every $\Gamma \cup\{\varphi\} \subseteq For(\Sigma)$:
 $\Gamma\vdash_{\rmbc^*}\varphi$ \ iff \ $\Gamma \models^g_{\mathbb{BI}} \varphi$.
\end{teom}
\begin{proof}
(Soundness) The proof is a trivial adaptation of the argument given for \rmbc\ in the proof of Theorem~\ref{sound-BALFI}.\\
(Completeness) It can be obtained by a slight adaptation of the proof of Theorem~\ref{comple-BALFI} given above. Thus, suppose that $\Gamma  \nvdash_{\rmbc^*} \varphi$.
Define the following relation on $For(\Sigma)$: $\alpha \equiv_\Gamma \beta$ iff $\Gamma\vdash_{\rmbc^*} \alpha \sse \beta$. By similar arguments to the ones given in the proof of Theorem ~\ref{comple-BALFI}, it is easy to see that $\equiv_\Gamma$ is an equivalence relation. Moreover, it is a congruence on $For(\Sigma)$: indeed, the only connectives that deserve special attention are $\neg$ and $\circ$. However, since  rules (\Rn) and (\Rb) can be used without restrictions in derivations in $\rmbc^*$, it follows that,  for $\# \in \{\neg,\cons\}$,  $\#\alpha \equiv_\Gamma \#\beta$ provided that $\alpha \equiv_\Gamma \beta$. This being so, the quotient set $A_{can}^\Gamma\defin For(\Sigma)/_{\equiv_\Gamma}$ is the carrier of a  BALFI, denoted by $\mathcal{B}_{can}^\Gamma$, with operations given as follows (where $[\alpha]_\Gamma$ denotes the equivalence class of $\alpha$ w.r.t. $\equiv_\Gamma$): $[\alpha]_\Gamma \,\#\, [\beta]_\Gamma \defin [\alpha \# \beta]_\Gamma$, for $\# \in \{\land,\lor,\imp\}$;  $\#[\alpha]_\Gamma \defin [\#\alpha]_\Gamma$, for $\# \in \{\neg,\cons\}$; $0 \defin [\alpha \wedge \neg \alpha \wedge \cons \alpha]_\Gamma$ and $1 \defin [\alpha \vee \neg \alpha]_\Gamma$ (for any $\alpha$). Now, let $v_{can}^\Gamma:For(\Sigma) \to A_{can}^\Gamma$ given by $v_{can}^\Gamma(\alpha)=[\alpha]_\Gamma$. Clearly $v_{can}^\Gamma$ is a valuation over  $\mathcal{B}_{can}^\Gamma$ such that $v_{can}^\Gamma(\alpha)=1$ iff $\Gamma \vdash_{\rmbc^*} \alpha$. From this,  $v_{can}^\Gamma(\alpha)=1$ for every $\alpha \in \Gamma$, while $v_{can}^\Gamma(\varphi)\neq 1$. This shows that $\Gamma  \not\models^g_{\mathbb{BI}} \varphi$.
\end{proof}

\begin{cor} [Semantical compactness of $\rmbc^*$] If $\Gamma  \models^g_{\mathbb{BI}} \varphi$ then there exists a finite subset $\Gamma_0$ of $\Gamma$ such that $\Gamma_0  \models^g_{\mathbb{BI}} \varphi$.
\end{cor}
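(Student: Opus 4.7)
The plan is to leverage the soundness and completeness theorem (Theorem~\ref{SCRmbC*}) together with the fact that the syntactic consequence relation $\vdash_{\rmbc^*}$ is patently finitary, since derivations in Definition~\ref{der-global} are finite sequences of formulas. Semantic compactness then drops out as a direct transfer of this finitariness through the soundness/completeness bridge.

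More concretely, I would argue as follows. Assume $\Gamma \models^g_{\mathbb{BI}} \varphi$. By completeness (the nontrivial direction of Theorem~\ref{SCRmbC*}), this yields $\Gamma \vdash_{\rmbc^*} \varphi$. Unpacking Definition~\ref{der-global}, there is a finite sequence $\varphi_1, \ldots, \varphi_n$ witnessing this derivation, with $\varphi_n = \varphi$, each $\varphi_i$ either an axiom of \rmbc, a member of $\Gamma$, or the conclusion of an application of \MP, \Rn, or \Rb\ to earlier terms in the sequence. Let $\Gamma_0 \defin \{\varphi_i \;:\; 1 \le i \le n\} \cap \Gamma$; this is a finite subset of $\Gamma$. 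The very same sequence $\varphi_1, \ldots, \varphi_n$ still counts as a derivation in $\rmbc^*$ when the set of premises is restricted to $\Gamma_0$, since every $\varphi_i$ appealed to as a premise belongs to $\Gamma_0$ by construction. Hence $\Gamma_0 \vdash_{\rmbc^*} \varphi$.

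Finally, applying soundness (the easy direction of Theorem~\ref{SCRmbC*}) to $\Gamma_0 \vdash_{\rmbc^*} \varphi$, we conclude $\Gamma_0 \models^g_{\mathbb{BI}} \varphi$, as required. There is no real obstacle here: the result is a routine consequence of the syntactic nature of Hilbert-style derivations, and the only subtle point is the implicit observation that even in the presence of the global rules \Rn\ and \Rb, a derivation still uses only finitely many premises, so finitariness of $\vdash_{\rmbc^*}$ is preserved. The whole argument is essentially the standard recipe by which strong completeness for a finitary Hilbert calculus delivers semantic compactness for its algebraic companion.
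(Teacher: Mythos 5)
Your proof is correct and follows exactly the route the paper takes: it transfers the finitariness of $\vdash_{\rmbc^*}$ (every derivation is a finite sequence, hence uses only finitely many premises) through the soundness and completeness bridge of Theorem~\ref{SCRmbC*}. The paper states this in one line; you have merely spelled out the same argument in full detail.
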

\begin{proof}
It is an immediate consequence of Theorem~\ref{SCRmbC*} and the fact that the consequence relation  $\vdash_{\rmbc^*}$ is finitary, by definition.
\end{proof}

\begin{rem}
The definition of truth-preserving semantics restricts the number of paraconsistent models for   $\rmbc^*$.  Indeed, let $p$ and $q$ be two different propositional variables. In order to show that $p,\neg p \not \models^g_{\mathbb{BI}} q$, there must be a   BALFI \B\ and a valuation $v$ over \B\ such that $v(p)=v(\neg p)=1$ but $v(q)\neq 1$. That is, \B\ must be such that $\neg 1 = 1$. Since $\neg 0 = 1$, it follows that $\neg \neg 0 = \neg 1 = 1 \not\leq 0$ in \B. This shows that there is no paraconsistent extension of   $\rmbc^*$ which satisfies axiom $(\axcf)$. In particular, there is no paraconsistent extension of $\rmbc^*$ satisfying axioms  $(\axcf)$ and  $(\axci)$. Thus, the open problems solved in Examples~\ref{exem-bC} and~\ref{exem-Ci} have a negative answer in this setting. This shows that the truth-preserving approach is much more restricted than the degree-preserving approach in terms of paraconsistency. 

In any case, there are {\em still} paraconsistent BALFIs for the truth-preserving logic  $\rmbc^*$ (namely, the ones such that $\neg 1=1$). The situation is quite different in the realm of fuzzy logics: in~\cite{CEG} and \cite{EEFGN}, among  others, the degree-preserving companion of several fuzzy logics has been studied, showing that their usual truth-preserving consequence relations are never paraconsistent. 
\end{rem}

The distinction between  local and global reasoning has been studied by A. Sernadas and his collaborators (for a brief exposition see, for instance, \cite{car:etal:2008}, Section~2.3 in Chapter~2). From the proof-theoretical perspective, the Hilbert calculi (called {\em Hilbert calculi with careful reasoning} in~\citet[Definition~2.3.1]{car:etal:2008}) are of the form $H= \langle \Theta, R_g,R_l\rangle$ where $\Theta$ is a propositional signature and $R_g \cup R_l$ is a set of inference rules such that $R_l \subseteq R_g$ and no element of $R_g\setminus R_l$ is an axiom schema. Elements of $R_g$ and $R_l$ are called {\em global} and {\em local} inference rules, respectively. Given $\Gamma \cup \{\varphi\} \subseteq For(\Theta)$, $\varphi$ is {\em globally derivable} from $\Gamma$ in $H$, written $\Gamma \vdash_H^g \varphi$, if $\varphi$ is derivable from $\Gamma$ in the Hilbert calculus $\langle \Theta, R_g\rangle$ by using the standard definition (see Definition~\ref{der-global}). On the other hand, in local derivations, besides using the local rules and the premises, global rules can be used provided that the premises are (global) theorems. In formal terms, $\varphi$ is {\em locally derivable} from $\Gamma$ in $H$, written $\Gamma \vdash_H^l \varphi$, if  there exists a finite sequence of formulas
$\varphi_1\ldots \varphi_n$ such that $\varphi_n$ is $\varphi$ and, for every $1 \leq i \leq n$, either $\varphi_i \in \Gamma$, or $\vdash_H^g\varphi_i$, or  $\varphi_i$ is the consequence of some inference rule of $R_l$  whose premises appear in the sequence $\varphi_1\ldots \varphi_{i-1}$ (observe that this includes the case where  $\varphi_i$ is an instance of an axiom in $R_l$). Obviously, local derivations are global derivations, but the converse does not  necessarily hold: for instance, $\varphi \vdash_H^g \square\varphi$ always hold in a Hilbert calculus $H$ for a  normal modal logic (say, {\bf K}), while $\varphi \vdash_H^l  \square\varphi$ holds iff $\varphi$ is a theorem.\footnote{This is clearer in terms of Kripke semantics: if $\varphi$ is a contingent formula, there is a Kripke model $M$ which satisfies $\varphi$ in a world $w$, but there may exist another world $w'$ related to $w$ in which $M$ does not satisfy $\varphi$; hence, $M$ does not satisfy $\square\varphi$ in $w$.} Clearly, local and global theorems coincide. 

For instance,  a Hilbert calculus for a (normal) modal logic  typically contains, as local inference rules, (\MP) and the axiom schemas, while the set of global rules is $(R_l)$ plus the \textit{necessitation rule}. As we have seen in Section~\ref{non-normal}, for the minimal non-normal modal logic \textbf{E} we have a similar case, with the replacement rule for $\square$ being employed instead of the \textit{necessitation rule}. In this case, the deduction metatheorem only holds for local derivations. Note that, by definition, derivations in $\rmbc^*$ lie in the scope of global derivations, while derivations in \rmbc\ are local derivations. Hence, the extension of \mbc\ with replacement can be recast as a Hilbert calculus  with careful reasoning  $\rmbc^ + = \langle \Sigma,R_g,R_l\rangle$  such that $R_l$ contains the axiom schemas of \mbc\ plus (\MP), and $R_g$ contains, besides this, the rules (\Rn) and (\Rb). Of course, the same can be done with the axiomatic extensions of \mbc\ (and so of \rmbc) considered in Section~\ref{Sexte-mbc}.

At the semantical level, local derivations correspond to degree-preserving semantics w.r.t. a given class $\mathbb{M}$ of algebras, while global derivations correspond to truth-preserving semantics w.r.t. the class $\mathbb{M}$.

The  presentation of \lfis\ with replacement as Hilbert calculi with careful reasoning (as the case of $\rmbc^ +$)  can be useful in order to combine these logics with (standard) normal modal logics by algebraic fibring: in this case, completeness of the fibring of the corresponding Hilbert calculi w.r.t. a semantics given by classes of suitable expansions of Boolean algebras would be immediate, according to the results stated in~\citet[Chapter~2]{car:etal:2008}. By considering, as done in~\cite{zan:acs:css:99}, classes $\mathbb{M}$ of powerset algebras (i.e., with carrier of the form  $\wp(W)$ for a non-empty set $W$) induced by Kripke models (which can be generalized to neighborhood models), then the fibring of, say, $\rmbc^ +$ with a given modal logic would simply be a minimal bimodal logic ${\bf E} {\oplus} {\bf E}$ with an additional modality $\square$, from which we derive the following modalities: $\lozenge\varphi \defin \sneg\square\sneg\varphi$, $\neg \varphi \defin \varphi \to \square_1\varphi$, and $\cons\varphi \defin \sneg(\varphi \land \square_1\varphi) \land \square_2\varphi$.
This opens interesting opportunities for future research.

\section{Extension to first-order logics}  \label{extension-FOL}

The next step is extending \rmbc, as well as its axiomatic extensions analyzed above, to first-order languages.
We begin by extending the notion of a propositional signature to the first-order case.

\begin{defn}\label{fosig}
	Let $\Sigma$ be a propositional signature. A first-order extension of $\Sigma$ is a first-order signature $\Omega = \langle \Sigma,\,\mathcal{C},\,\{\mathcal{F}_n\}_{n\geq 1},\,\{\mathcal{P}_n\}_{n\geq 1} \rangle$ where:\\ 
	
	$\begin{array}{ll}
	- &  \mbox{$\mathcal{C}$ is a denumerable set of individual constants};\\[1mm] 
	- & \mbox{each $\mathcal{F}_n$ is a denumerable set of function symbols of arity $n$},\\[1mm]
	- & \mbox{each $\mathcal{P}_n$ is a denumerable set of predicate symbols of arity $n$}.
	\end{array}$
\end{defn}

The set $\mathcal{C}$ as well as any of $\mathcal{F}_n$ and $\mathcal{P}_n$ may be empty. However, $\bigcup_{n\geq 1}\mathcal{P}_n$ must be nonempty.
	In the sequel, we work with a fixed denumerable set $Var=\{v_1,v_2,\ldots\}$ of individual variables. The first-order language over $\Omega$ is defined as follows. The set of terms \tert\ (resp.~closed terms \ctert) is freely generated by $Var\,\cup\,\mathcal{C}$ (resp.~$\mathcal{C}$) employing the elements of $\bigcup_{n\geq 1}\mathcal{F}_n$ as connectives. The set of atomic formulas \afor\ is composed of expressions of the form $P(\tau_1,\ldots,\tau_n)$ where $P \in \mathcal{P}_n$ and $\tau_i \in $ \tert\ for $1\leq i \leq n$. Finally, the set of formulas \fort\ is freely generated by \afor\ employing $\Sigma \cup \{\forall v\}_{v \in  Var} \cup \{\exists v\}_{v \in  Var}$ as connectives, where the latter two are $Var$-indexed sets of quantifier symbols which we treat as having arity $1$.

The set of closed formulas (or sentences) will be denoted by \sent. The formula obtained from a given formula $\varphi$ by substituting every free occurrence of a variable $x$ by a term $t$ will be denoted by $\varphi[x/t]$.

\begin{defn} Let $\Omega$ be a first-order signature extending $\Sigma$ from Definition~\ref{signa}. The logic \rqmbc\ is obtained from  \rmbc\ by adding the following axioms and rules:\\[1mm]
	
	{\bf Axiom Schemas:}\\
	
	$\begin{array}{ll}
	{\bf (Ax\exists)} & \varphi[x/t]\to\exists x\varphi, \ \mbox{ if $t$ is a term free for $x$ in $\varphi$}\\[3mm]
	{\bf (Ax\forall)} & \forall x\varphi\to\varphi[x/t], \ \mbox{ if $t$ is a term free for $x$ in $\varphi$}
	\end{array}$\\
	
	\
	
	{\bf Inference rules:}\\
	
	$\begin{array}{ll}
	{\bf (\exists\mbox{\bf -In})} & \dfrac{\varphi\to\psi}{\exists x\varphi\to\psi}, \ \mbox{  where $x$ does not occur free in $\psi$}\\[4mm]
	{\bf (\forall\mbox{\bf -In})} & \dfrac{\varphi\to\psi}{\varphi\to\forall x\psi}, \ \mbox{ where $x$ does not occur free in $\varphi$}
	\end{array}$
\end{defn}\

The consequence relation of \rqmbc, suitably extending the one for \rmbc\ (recall Definition~\ref{deriv-RmbC}) with the axioms and rules above, will be denoted by $\vdash_{\rqmbc}$. 
Recall that a  Hilbert calculus  with careful reasoning for \rmbc\ called   $\rmbc^+$ was defined at the end of Section~\ref{truth-pres}. This can be extended to the first-order case by considering the  Hilbert calculus  with careful reasoning $\rqmbc^+$ over a given first-order signature $\Omega$, such that $R_l$ contains (\MP) and the axiom schemas of {\mbc} plus ${\bf (Ax\exists)}$ and ${\bf (Ax\forall)}$. In turn, $R_g$ contains, besides the rules $R_l$, the (global) rules $(\Rn)$, $(\Rb)$, $(\exists\mbox{\bf -In}$) and $(\forall\mbox{\bf -In})$ (over $\Omega$).
The property of replacement for {\rqmbc} follows directly from the propositional case (Theorem~\ref{IPE-rmbc}), observing that quantifiers (seen as unary operators) clearly preserve logical equivalence.

\begin{teom} \label{IPE-rqmbc} Let {\bf L} be the logic \rqmbc\ or any axiomatic extension of it over the signature $\Omega$, in which the derivation from premises is defined as in \rqmbc. Then, {\bf L} satisfies the replacement property.
\end{teom}

Given that, as in the case of \rmbc, \rqmbc\ uses local reasoning, it satisfies the deduction metatheorem without any restrictions. This is different from what happens with \qmbc, a first-order version of \mbc\ proposed in~\cite{car:etal:14}, where this metatheorem holds with the same restrictions as in first-order classical logic.\footnote{It is worth mentioning that the only difference between {\qmbc} and {\rqmbc} is that the latter contains the inference rules $(\Rn)$ and $(\Rb)$, which are not present in the former (besides the different notions of derivation from premises adopted in {\qmbc} and in {\rqmbc}). In addition, da Costa's axioms included in \qmbc, which state the equivalence between two formulas  such that one is a variant of the other (that is, they coincide up to void quantifiers and renaming of bounded variables), is no longer necessary in \rqmbc.}

\begin{teom}[Deduction Metatheorem for \rqmbc] \ \\
	$\Gamma, \varphi \vdash_{\rqmbc}  \psi$ \ if and only if $\Gamma \vdash_{\rqmbc} \varphi \to \psi$.
\end{teom}

\subsection{BALFI semantics for \rqmbc} \label{balfifol}

In~\cite{CFG19} a  semantics of first-order structures, based on  \textit{swap structures} over complete Boolean algebras, was obtained for \qmbc. Since \rqmbc\ is self-extensional, that semantics can be drastically simplified, and so the non-deterministic swap structures will be replaced by BALFIs, which are ordinary algebras. The semantics will be defined in an analogous fashion as we did for the propositional case, where valuations can be considered as homomorphisms between the algebra of formulas and BALFIs.
	In the sequel, only BALFIs over	\textit{complete} Boolean algebras will be considered. This requirement is justified since quantifiers will be interpreted by means of (infinite) infima and suprema in Boolean algebras, hence their existence must be always guaranteed.

\begin{defn} A Boolean algebra \B\ is {\em complete} if there exist in \B\ both the infimum $\bigwedge X$ and the supremum $\bigvee X$ of every subset $X$ of its carrier. A  {\em complete  BALFI} is a  BALFI such that its  reduct  to $\Sigma_{BA}$ is a complete Boolean algebra.
\end{defn}

\begin{defn} Let $\Omega$ be a first-order signature. Moreover, let \B\ be a complete BALFI with carrier $A$ and $U$ be some nonempty set (called universe or domain). A  (first-order) {\em structure} (or a {\em\rqmbc-structure}) over $\Omega$ is a tuple $\mathfrak{A} = \langle U, \B, I_{\mathfrak{A}} \rangle$ such that $I_{\mathfrak{A}}$ is an interpretation function which assigns:\\
	
	$\begin{array}{ll}
	- & \mbox{ an element $I_\mathfrak{A}(c)$ of $U$ to each individual constant $c \in \mathcal{C}$};\\[1mm] 
	- & \mbox{ a function $I_\mathfrak{A}(f): U^n \to U$ to each function symbol $f$ of arity $n$};\\[1mm] 
	- & \mbox{ a function  $I_\mathfrak{A}(P): U^n \to A$ to each predicate symbol $P$ of arity $n$}.
	\end{array}$
\end{defn}

%\

%\textsc{Notation.} 
From now on, we will write $c^\mathfrak{A}$, $f^\mathfrak{A}$ and $P^\mathfrak{A}$ instead of $I_\mathfrak{A}(c)$, $I_\mathfrak{A}(f)$ and $I_\mathfrak{A}(P)$ to denote the interpretation of an individual constant symbol $c$, a function symbol $f$ and a predicate symbol $P$, respectively.

\begin{defn} Given a structure $\mathfrak{A}=\langle U, \B, I_{\mathfrak{A}} \rangle$ over $\Omega$,  an {\em assignment} over  $\mathfrak{A}$ is any function $\mu: Var \to U$.  The set of assignments over $\mathfrak{A}$ will be denoted by $\textsc{A}(\mathfrak{A})$. For every $a \in U$ and every assignment $\mu$, let $\mu^{x}_{a}$ be the assignment such that $\mu^{x}_{a}(x)=a$ and $\mu^{x}_{a}(y)=\mu(y)$ if $y \neq x$.
\end{defn}

\begin{defn}[Term denotation] \label{term} 
	Given  a structure $\mathfrak{A}$ over $\Omega$, and given an assignment $\mu$ over $\mathfrak{A}$, we define recursively, for each term $t$,  an element $\termvalue{t}^\mathfrak{A}_\mu$ in $U$ as follows:\\
	
	$\begin{array}{ll}
	- & \termvalue{c}^\mathfrak{A}_\mu = c^\mathfrak{A} \ \mbox{ if $c$ is an individual constant};\\[1mm]
	- & \termvalue{x}^\mathfrak{A}_\mu = \mu(x) \ \mbox{ if $x$ is a variable};\\[1mm]
	- & \termvalue{f(t_1,\ldots,t_n)}^\mathfrak{A}_\mu = f^\mathfrak{A}(\termvalue{t_1}^\mathfrak{A}_\mu,\ldots,\termvalue{t_n}^\mathfrak{A}_\mu) \ \mbox{ if $f$ is a function symbol of arity $n$}\\[1mm]
	& \mbox{ and $t_1,\ldots,t_n$ are terms}.
	\end{array}$
\end{defn}

	\begin{defn}[\rqmbc\ interpretation maps]~\label{val}
		Let \B\ be a complete BALFI and $\mathfrak{A}=\langle U, \B, I_{\mathfrak{A}} \rangle$ a structure over $\Omega$. The {\em interpretation map} for \rqmbc\ over $\mathfrak{A}$ is a function $\termvalue{\cdot}^{\mathfrak{A}}_{\mu}:For_1(\Omega)\times \textsc{A}(\mathfrak{A}) \to A$ (for a given assigment $\mu$ over $\mathfrak{A}$) satisfying the following clauses:\\
		
		$\begin{array}{ll}
		(i) & \termvalue{P(t_1,\ldots,t_n)}^{\mathfrak{A}}_\mu = P^{\mathfrak{A}}(\termvalue{t_1}^{\mathfrak{A}}_\mu,\ldots,\termvalue{t_n}^{\mathfrak{A}}_\mu), \ \mbox{ for $P(t_1,\ldots,t_n)$ atomic}; \\[2mm]
		(ii) & \termvalue{\#\varphi}^{\mathfrak{A}}_\mu = \# \termvalue{\varphi}^{\mathfrak{A}}_\mu, \ \mbox{ for every $\#\in \{\neg, \cons\}$};\\[2mm]
		(iii) & \termvalue{\varphi \,\#\, \psi}^{\mathfrak{A}}_\mu = \termvalue{\varphi}^{\mathfrak{A}}_\mu \,\#\, \termvalue{\psi}^{\mathfrak{A}}_\mu, \ \mbox{ for every $\#\in \{\wedge,\vee, \to\}$};\\[2mm]
		(iv) & \termvalue{\forall x\varphi}^{\mathfrak{A}}_\mu = \bigwedge\{\termvalue{\varphi}^{\mathfrak{A}}_{\nu} \;: \; {\nu = \mu^x_a \;;\; a \in U}\};\\[2mm]
		(v) & \termvalue{\exists x\varphi}^{\mathfrak{A}}_\mu = \bigvee\{\termvalue{\varphi}^{\mathfrak{A}}_{\nu} \;: \; {\nu = \mu^x_a \;;\; a \in U}\}.
		\end{array}$
	\end{defn}

\begin{rems} \ \\ 
	(1) If $t$ is a closed term we can write $\termvalue{t}^\mathfrak{A}$ instead of $\termvalue{t}^\mathfrak{A}_\mu$, for any   assignment $\mu$, since it does not depend on $\mu$. Analogously, we can write $\termvalue{\varphi}^\mathfrak{A}$ instead of $\termvalue{\varphi}^\mathfrak{A}_\mu$ for a closed formula $\varphi$.\\
	(2) The last two items can equivalently be written as:\\
		$$(iv^*)\;\;\termvalue{\forall x\varphi}^{\mathfrak{A}}_\mu = \bigwedge_{a \in U} \termvalue{\varphi}^{\mathfrak{A}}_{\mu^x_a} \text{\;\;\;\;\;\;and\;\;\;\;\;\;}
		(v^*)\;\;\termvalue{\exists x\varphi}^{\mathfrak{A}}_\mu = \bigvee_{a \in U} \termvalue{\varphi}^{\mathfrak{A}}_{\mu^x_a}$$
		If $x$ does not appear free in $\varphi$ we have that
		$\termvalue{\forall x\varphi}^{\mathfrak{A}}_\mu = \termvalue{\varphi}^{\mathfrak{A}}_\mu$ and
		$\termvalue{\exists x\varphi}^{\mathfrak{A}}_\mu = \termvalue{\varphi}^{\mathfrak{A}}_\mu$.
\end{rems}

\begin{defn} \label{consrel0} Let  $\mathfrak{A}$ be  a \rqmbc-structure over $\Omega$. A formula $\varphi$ in $For_1(\Omega)$, $\varphi$ is said to be   {\em valid in $\mathfrak{A}$}, denoted by $\models_{\mathfrak{A}}\varphi$, if  $\termvalue{\varphi}^{\mathfrak{A}}_{\mu}=1$,  for every assignment $\mu$. We say that $\varphi$ is  {\em valid in \rqmbc}, denoted by $\models_{\rqmbc}\varphi$, if $\models_{\mathfrak{A}}\varphi$, for every  $\mathfrak{A}$.
\end{defn}

\begin{defn} [First-order degree-preserving BALFI semantics] \label{consrel} Let $\Gamma \cup \{\varphi\}$ be a set of formulas in $For_1(\Omega)$. Then  $\varphi$ is said to be  a {\em semantical consequence of $\Gamma$ in \rqmbc\ w.r.t. BALFIs}, denoted by $\Gamma\models_{\rqmbc}\varphi$, if either $\varphi$ is valid in \rqmbc, or  there exists a finite, non-empty subset $\{\gamma_1,\ldots,\gamma_n\}$ of $\Gamma$ such that the formula $(\gamma_1 \wedge \ldots \wedge \gamma_n) \to \varphi$ is  valid in  \rqmbc.
\end{defn}

In order to prove the soundness of \rqmbc\ w.r.t. BALFI semantics, it is necessary to state an important result:

\begin{teom}[Substitution Lemma] \label{substlem}
		Let \B\ be a complete BALFI,   $\mathfrak{A}$ a structure over $\Omega$, and $\mu$ an assignment over $\mathfrak{A}$. If $t$ is a term free for $z$ in $\varphi$ and $b = \termvalue{t}^{\mathfrak{A}}_{\mu}$, then $\termvalue{\varphi[z/t]}^{\mathfrak{A}}_{\mu} = \termvalue{\varphi}^{\mathfrak{A}}_{\mu^z_b}$.
\end{teom}
\begin{proof}
	The result is easily proved by induction on the complexity of $\varphi$.
\end{proof}

\begin{teom} [Soundness of \rqmbc\ w.r.t. BALFIs] \ \\ \label{adeq-rqmnc-BALFI}
	For every set $\Gamma \cup\{ \varphi\}  \subseteq For_1(\Omega)$:   $\Gamma \vdash_\rqmbc \varphi$ implies that $\Gamma \models_\rqmbc \varphi$.
\end{teom}
\begin{proof} It will be proven by extending the proof of soundness of \rmbc\ w.r.t. BALFI semantics (Theorem~\ref{sound-BALFI}). Thus, the only cases required to be analyzed are the new axioms and inference rules. By the very definitions, and taking into account Theorem~\ref{substlem}, it is immediate to see that axioms ${\bf (Ax\exists)}$ and ${\bf (Ax\forall)}$ are valid in any $\mathfrak{A}$. With respect to ${\bf (\exists\mbox{\bf -In})}$, suppose that $\alpha \to \beta$ is valid in a given $\mathfrak{A}$, where the variable $x$ does not occur free in $\beta$. Then   $\termvalue{\alpha}^{\mathfrak{A}}_{\mu} \leq  \termvalue{\beta}^{\mathfrak{A}}_{\mu}$ for every assignment $\mu$. In particular, for every $a \in U$, $\termvalue{\alpha}^{\mathfrak{A}}_{\mu^{x}_{a}} \leq  \termvalue{\beta}^{\mathfrak{A}}_{\mu^{x}_{a}} = \termvalue{\beta}^{\mathfrak{A}}_{\mu}$, since $x$ is not free in $\beta$. But then: $\termvalue{\exists x \alpha}^{\mathfrak{A}}_{\mu} = \bigvee_{a \in U} \termvalue{\alpha}^{\mathfrak{A}}_{\mu^{x}_{a}} \leq \termvalue{\beta}^{\mathfrak{A}}_{\mu}$. Hence, $\exists x\alpha \to \beta$ is valid in $\mathfrak{A}$. The case for ${\bf (\forall\mbox{\bf -In})}$  is proved analogously.
\end{proof}

\subsection{Completeness of \rqmbc\ w.r.t. BALFI semantics} \label{complete}

Along this section, in which the  completeness of \rqmbc\ w.r.t. BALFI semantics will be shown, $\Omega$ will denote a fixed first-order signature, over which the consequence relations $\vdash_{\rqmbc}$ and $\models_\rqmbc$ are defined. In turn, $\Omega_{C}$ will denote the signature obtained from $\Omega$ by adding an infinite and denumerable set $C$ of new individual constants. The syntactical consequence relation of \rqmbc\ over the signature $\Omega_{C}$ will be denoted by  $\vdash^{C}_{\rqmbc}$.

\begin{prop}  \label{constants}
If  $\Gamma \cup \{\varphi\}\in For_1(\Omega)$, then: $\Gamma \vdash^{C}_{\rqmbc} \varphi$ \ iff \ $\Gamma\vdash_{\rqmbc}\varphi$.
\end{prop}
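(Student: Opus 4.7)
The approach is the standard ``new constants are harmless'' argument from first-order logic, adapted to the present calculus. The $(\Leftarrow)$ direction is immediate: any derivation in $\vdash_{\rqmbc}$ is also a derivation in $\vdash^{C}_{\rqmbc}$, since the calculus over $\Omega_C$ contains (literally) every axiom and rule of the one over $\Omega$.

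For the nontrivial direction $(\Rightarrow)$, by the definition of derivations from premises (extending Definition~\ref{deriv-RmbC}), it suffices to prove the following sublemma: for every $\psi \in For_1(\Omega)$, if $\vdash^{C}_{\rqmbc} \psi$ then $\vdash_{\rqmbc} \psi$. Indeed, assuming $\Gamma \vdash^{C}_{\rqmbc} \varphi$ with $\Gamma \cup \{\varphi\} \subseteq For_1(\Omega)$, either $\vdash^{C}_{\rqmbc} \varphi$ (apply the sublemma to $\psi = \varphi$), or there is a finite non-empty subset $\{\gamma_1,\ldots,\gamma_n\} \subseteq \Gamma$ with $\vdash^{C}_{\rqmbc} (\gamma_1 \wedge \ldots \wedge \gamma_n) \to \varphi$ (apply the sublemma to $\psi = (\gamma_1 \wedge \ldots \wedge \gamma_n) \to \varphi$, which still lies in $For_1(\Omega)$).

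To prove the sublemma, take a derivation $\psi_1,\ldots,\psi_m$ of $\psi$ in $\vdash^{C}_{\rqmbc}$. Let $c_1,\ldots,c_k$ be the (finitely many) constants from $C$ that occur anywhere in this derivation, and pick pairwise distinct variables $y_1,\ldots,y_k \in Var$ that do \emph{not} occur (free or bound) in any $\psi_j$. Let $\sigma$ denote the substitution that uniformly replaces each $c_i$ by $y_i$, and apply $\sigma$ to every formula in the derivation to obtain a sequence $\sigma(\psi_1),\ldots,\sigma(\psi_m)$ in $For_1(\Omega)$. Since $\psi \in For_1(\Omega)$, no $c_i$ occurs in $\psi$, so $\sigma(\psi_m) = \psi$. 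The claim is that this new sequence is a derivation of $\psi$ in $\vdash_{\rqmbc}$, which I would verify step by step: the propositional axioms and rules (\MP), $(\Rn)$, $(\Rb)$ are clearly preserved by uniform substitution of constants by variables, so the only delicate points are the quantifier axioms ${\bf (Ax\exists)}$, ${\bf (Ax\forall)}$ and the quantifier rules ${\bf (\exists\text{-In})}$, ${\bf (\forall\text{-In})}$.

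The main (and only) obstacle is making sure that the side conditions of the quantifier axioms and rules are preserved under $\sigma$. For ${\bf (Ax\exists)}$/${\bf (Ax\forall)}$ one must check that if $t$ is free for $x$ in $\varphi$, then $\sigma(t)$ is free for $x$ in $\sigma(\varphi)$; this holds because $\sigma$ only introduces the variables $y_i$, which by choice are fresh and in particular do not appear bound anywhere in the derivation, so no variable capture can be created. For ${\bf (\exists\text{-In})}$/${\bf (\forall\text{-In})}$ one must check that whenever a rule was applied with a variable $x$ not free in a formula $\chi$, then $x$ is still not free in $\sigma(\chi)$; again this holds since $\sigma$ only replaces constants by the fresh variables $y_i$, none of which coincides with $x$ (as $x$ actually occurs in the derivation whereas the $y_i$ do not). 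Thus $\sigma(\psi_1),\ldots,\sigma(\psi_m)$ is a legitimate derivation in \rqmbc\ of $\psi$, establishing the sublemma and hence the proposition.
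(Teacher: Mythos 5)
Your proof is correct and follows essentially the same route as the paper's: reduce to the case $\Gamma=\emptyset$ via the definition of derivation from premises, then replace the finitely many constants of $C$ occurring in a derivation by fresh variables to obtain a derivation over $\Omega$. The only difference is that you spell out the verification that the quantifier axioms' and rules' side conditions survive the substitution, which the paper dismisses as ``clear''; your added detail is accurate.
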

\begin{proof}
Observe that it is enough to prove the case for $\Gamma=\emptyset$, since the general case follows easily from this.
The `if' part is an immediate consequence of the definition of $\vdash^{C}_{\rqmbc}$. Now, assume that $\vdash^{C}_{\rqmbc} \varphi$, for $\varphi \in For_1(\Omega)$,  and let $d$ be a derivation   $\varphi_1\ldots \varphi_n$ of $\varphi$ in \rqmbc\ over the signature $\Omega_C$ (hence $\varphi_n=\varphi$). Since the derivation $d$ is finite and the length of the formulas is also finite, the set of constants in $C$ occurring in $d$ is contained in $\{c_1,\ldots,c_m\}$ for some $m \geq 1$. Let $x_1,\ldots,x_m$ be $m$ different fresh variables, that is, $x_i\neq x_j$ if $i\neq j$, and no $x_i$ occurs (free or bounded) in the formulas of the derivation $d$. Let 
$\varphi'_i$ be the formula obtained from $\varphi_i$ by substituting uniformly the constant $c_j$ by the variable $x_j$, for $1 \leq i \leq n$ and $1 \leq j \leq m$. Observe that $\varphi'_m=\varphi$. Then, it is clear that the sequence $d'$ given by  $\varphi'_1\ldots \varphi'_n$ is a derivation of $\varphi$ in \rqmbc\ over the signature $\Omega$. From  this, $\vdash_{\rqmbc}\varphi$.
\end{proof}

By an easy adaptation of the first part of the proof of Theorem~\ref{comple-BALFI}  the following can be proved:

\begin{prop} \label{AC} Let ${\equiv^{C}} \subseteq For_1(\Omega_C)^{2}$ be a relation in $For_1(\Omega_C)$ defined as follows:  $\alpha \equiv^C \beta$ iff $\vdash_{\rqmbc}^C \alpha \leftrightarrow\beta$. Then, $\equiv^{C}$ is a congruence over $For_1(\Omega_C)$ (seen as an algebra over $\Sigma$) such that the operations  $[\alpha]^{C} \# [\beta]^{C} \defin [\alpha \# \beta]^{C}$ for any $\# \in \{\wedge, \vee, \to\}$, $\# [\alpha]^{C}  \defin [\# \alpha]^{C}$ for any $\# \in \{\neg, \circ\}$, $0^{C} \defin [\alpha \wedge (\neg \alpha \wedge \circ \alpha)]^{C}$ and  $1^{C} \defin [\alpha \vee \neg \alpha]^{C}$ (where $[\alpha]^{C}$ denotes the equivalence class of $\alpha$ w.r.t. $\equiv^{C}$) define a  BALFI $\mathcal{B}_C^0$ with carrier $A_C \defin For_1(\Omega_C)/_{\equiv^C}$.
\end{prop}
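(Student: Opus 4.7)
The plan is to closely follow the first part of the proof of Theorem~\ref{comple-BALFI}, replacing $\vdash_{\rmbc}$ by $\vdash^{C}_{\rqmbc}$ throughout. Since the claim treats $For_1(\Omega_C)$ merely as an algebra over $\Sigma$, the quantifier structure of formulas is irrelevant to the argument; what matters is only that the propositional axioms of \cplp\ and the rules $(\Rn)$ and $(\Rb)$ are available in \rqmbc, and that the propositional consequence relation is preserved (so every propositional $\Sigma$-theorem of \rmbc\ is a theorem of \rqmbc, and similarly over $\Omega_C$).

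First I would verify that $\equiv^{C}$ is an equivalence relation. Reflexivity comes from $\vdash^{C}_{\rqmbc} \alpha \leftrightarrow \alpha$; symmetry follows from the commutativity of $\wedge$ in $(\alpha \to \beta) \wedge (\beta \to \alpha)$, provable in \cplp; transitivity follows from the derivability in \cplp\ of $((\alpha \leftrightarrow \beta) \wedge (\beta \leftrightarrow \gamma)) \to (\alpha \leftrightarrow \gamma)$, which is inherited by \rqmbc.

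Next I would check that $\equiv^{C}$ is a congruence on $For_1(\Omega_C)$ with respect to the signature $\Sigma$. For $\# \in \{\wedge, \vee, \to\}$, the formula $((\alpha \leftrightarrow \alpha') \wedge (\beta \leftrightarrow \beta')) \to ((\alpha \,\#\, \beta) \leftrightarrow (\alpha' \,\#\, \beta'))$ is a theorem of \cplp, so $\alpha \equiv^{C} \alpha'$ and $\beta \equiv^{C} \beta'$ imply $\alpha \,\#\, \beta \equiv^{C} \alpha' \,\#\, \beta'$. For $\# \in \{\neg, \cons\}$, the congruence property is precisely what the rules $(\Rn)$ and $(\Rb)$ deliver: from $\vdash^{C}_{\rqmbc} \alpha \leftrightarrow \beta$ we immediately obtain $\vdash^{C}_{\rqmbc} \#\alpha \leftrightarrow \#\beta$. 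Consequently the operations $[\alpha]^{C} \,\#\, [\beta]^{C} \defin [\alpha \# \beta]^{C}$ (for the binary $\#$) and $\#[\alpha]^{C} \defin [\#\alpha]^{C}$ (for $\# \in \{\neg, \cons\}$) are all well-defined on the quotient $A_C$.

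It remains to check that the constants $0^{C}$ and $1^{C}$ do not depend on the choice of $\alpha$ and that the resulting structure $\mathcal{B}_{C}^{0}$ is a BALFI. Any two candidate formulas $\alpha \wedge \neg\alpha \wedge \cons\alpha$ and $\beta \wedge \neg\beta \wedge \cons\beta$ are interderivable in \rqmbc\ (each implies anything via $(\axexp)$ together with \cplp), so their $\equiv^{C}$-classes agree; likewise, any two instances $\alpha \vee \neg\alpha$ and $\beta \vee \neg\beta$ are theorems by $(\axtnd)$ and therefore $\equiv^{C}$-equivalent. The Boolean algebra axioms on the $\Sigma_{BA}$-reduct lift routinely from the corresponding \cplp-theorems, and the two extra identities characterizing BALFIs, namely $[\alpha]^{C} \vee [\neg\alpha]^{C} = 1^{C}$ and $[\alpha]^{C} \wedge [\neg\alpha]^{C} \wedge [\cons\alpha]^{C} = 0^{C}$, follow respectively from $(\axtnd)$ and directly from the definition of $0^{C}$. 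The only mildly delicate point is to ensure that the well-definedness of $0^{C}$ and $1^{C}$ is argued using only \cplp\ plus $(\axexp)$ and $(\axtnd)$, without circularly invoking the congruence of $\equiv^{C}$ for $\neg$ and $\cons$; this is straightforward, since those interderivabilities do not pass through $(\Rn)$ or $(\Rb)$.
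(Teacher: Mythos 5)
Your proposal is correct and follows essentially the same route as the paper, which itself only says the result is obtained ``by an easy adaptation of the first part of the proof of Theorem~\ref{comple-BALFI}'': you build the Lindenbaum--Tarski quotient, get congruence for the binary connectives from \cplp-theorems and for $\neg,\cons$ from $(\Rn)$ and $(\Rb)$ (the content of the replacement property the paper invokes), and verify the BALFI identities via (\axtnd) and (\axexp). Your added remarks --- that the quantifier structure is irrelevant since $For_1(\Omega_C)$ is only viewed as a $\Sigma$-algebra, and that the well-definedness of $0^C$ and $1^C$ does not circularly use the congruence for $\neg$ and $\cons$ --- are just explicit versions of details the paper leaves implicit.
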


The Boolean algebra  underlying the BALFI $\mathcal{B}_C^0$ will be denoted by  $\A_C$.

The construction of the canonical model for \rqmbc\  requires a complete BALFI, hence  the Boolean algebra $\mathcal{A}_C$ must be completed. Recall\footnote{See, for instance, \citet[Chapter~25]{gi:ha:09}.} that a Boolean algebra $\A'$ is a {\em completion} of a Boolean algebra \A\ if:~(1) $\A'$ is complete, and~(2) $\A'$ includes \A\ as a dense subalgebra (that is: every element in $A'$ is the supremum, in $\A'$, of some subset of $A$). From this, $\A'$ preserves all the existing infima and suprema in \A. In formal terms: there exists a monomorphism of Boolean algebras (therefore an injective mapping) $\ast:\A \to \A'$ such that $\ast(\bigvee_\A X)= \bigvee_{\A'} \ast[X]$  for every $X\subseteq A$ such that the supremum $\bigvee_\A X$ exists, where $\ast[X]=\{\ast(a) \ : \ a \in X\}$. Analogously, $\ast(\bigwedge_\A X)= \bigwedge_{\A'} \ast[X]$  for every $X\subseteq A$ such that the infimum $\bigwedge_\A X$ exists. By well-known results obtained  independently in~\cite{macneille}  and~\cite{tarski}, it follows that every Boolean algebra has a completion; moreover, the completion is unique up to isomorphisms. Based on this, let $\textsf{C}\A_C$ be the completion of $\A_C$ and let  $\ast:\A_C \to \textsf{C}\A_C$ be the associated monomorphism. Recalling from Proposition~\ref{AC} that $\A_C$ is the reduct of the BALFI $\mathcal{B}_C^0$, we can define the following:

\begin{defn}\label{nma}
 Let $\textsf{C}\A_C$ be the complete Boolean algebra defined as above. The canonical BALFI for \rqmbc\ over $\Omega_C$, denoted by  $\mathcal{B}_C$, is obtained from $\textsf{C}\A_C$ by adding the unary operators $\neg_\ast$ and $\cons_\ast$ defined as  follows: $\neg_\ast b = \ast(\neg a)$ if $b=\ast(a)$, and $\neg_\ast b = \sneg  b$ if $b \notin \ast[A_C]$ (where $\sneg$ is the Boolean complement in $\textsf{C}\A_C$ and $\neg$ is the operation in $\mathcal{B}_C^0$); $\cons_\ast b = \ast(\cons a)$ if $b=\ast(a)$, and $\cons_\ast b = 1$ if $b \notin \ast[A_C]$ (where $\ast$ is the operation in $\mathcal{B}_C^0$).\footnote{Observe that the definition of the operations $\neg_\ast$ and $\cons_\ast$ outside the image of $*$ meet the requirements of BALFIs. Of course other options,  coherent  with the definition of BALFIs, could be possible for such values outside $\ast[A_C]$, still defining a BALFI structure over $\textsf{C}\A_C$.}
\end{defn}

\begin{prop} \label{balfi-well}
The operations over $\mathcal{B}_C$ are well-defined, and $\mathcal{B}_C$ is a complete BALFI such that $\ast([\alpha]^C)=1 \ \mbox{ iff } \ \vdash_{\rqmbc}^C \alpha$.
\end{prop}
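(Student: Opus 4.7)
The plan is to verify the three assertions in turn, relying crucially on the fact that the canonical monomorphism $\ast\colon \A_C \to \textsf{C}\A_C$ is injective and preserves the Boolean operations (in particular, $\ast(0^C) = 0$ and $\ast(1^C) = 1$), and that $\mathcal{B}_C^0$ is already known to be a BALFI by Proposition~\ref{AC}.

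First, for well-definedness of $\neg_\ast$ and $\cons_\ast$, the only issue is on the image $\ast[A_C]$, where the definition uses a representative $a$ with $b = \ast(a)$. Since $\ast$ is injective, such an $a$ is unique, so both operations are unambiguously defined. Outside $\ast[A_C]$ the definitions are by a fixed rule and raise no issue. The Boolean operations and constants $0,1$ of $\mathcal{B}_C$ are inherited from $\textsf{C}\A_C$, hence are well-defined by construction.

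Second, to see $\mathcal{B}_C$ is a complete BALFI, note that its $\Sigma_{BA}$-reduct is $\textsf{C}\A_C$, which is a complete Boolean algebra by construction. It remains to check, for every $b \in \textsf{C}\A_C$, the two BALFI equations $b \vee \neg_\ast b = 1$ and $b \wedge \neg_\ast b \wedge \cons_\ast b = 0$. Split into cases. If $b = \ast(a)$ for some $a \in A_C$, then since $\mathcal{B}_C^0$ is a BALFI we have $a \vee \neg a = 1^C$ and $a \wedge \neg a \wedge \cons a = 0^C$ in $\mathcal{B}_C^0$; applying the homomorphism $\ast$ and using $\ast(1^C)=1$, $\ast(0^C)=0$ together with the definitions of $\neg_\ast$ and $\cons_\ast$ on $\ast[A_C]$ yields $b \vee \neg_\ast b = 1$ and $b \wedge \neg_\ast b \wedge \cons_\ast b = 0$. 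If $b \notin \ast[A_C]$, then $\neg_\ast b = \sneg b$ and $\cons_\ast b = 1$, so the equations reduce to the Boolean identities $b \vee \sneg b = 1$ and $b \wedge \sneg b \wedge 1 = 0$ in the Boolean algebra $\textsf{C}\A_C$. Hence $\mathcal{B}_C$ satisfies the axioms of Definition~\ref{defBALFI}.

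Third, for the equivalence $\ast([\alpha]^C) = 1$ iff $\vdash_{\rqmbc}^C \alpha$, pick any formula $\beta$ and recall $1^C = [\beta \vee \neg \beta]^C$. Since $\ast$ is an injective homomorphism with $\ast(1^C) = 1$, we have $\ast([\alpha]^C) = 1$ iff $[\alpha]^C = 1^C$ iff $\alpha \equiv^C \beta \vee \neg \beta$, i.e.\ iff $\vdash_{\rqmbc}^C \alpha \leftrightarrow (\beta \vee \neg \beta)$. Since $\beta \vee \neg \beta$ is a theorem of \rqmbc\ (by $(\axtnd)$), the right-hand side is equivalent to $\vdash_{\rqmbc}^C \alpha$, as desired.

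The only mildly delicate step is the verification of the BALFI equations for elements of $\textsf{C}\A_C$ that lie outside the image of $\ast$; this is why the definition of $\neg_\ast$ and $\cons_\ast$ on that portion was chosen to coincide with Boolean complement and top element respectively, so that the equations collapse to Boolean-algebraic identities. Everything else is a direct transfer along the homomorphism $\ast$ from the already-established BALFI $\mathcal{B}_C^0$.
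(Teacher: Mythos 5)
Your proof is correct and follows essentially the same route as the paper's (which merely notes the injectivity of $\ast$ for well-definedness and declares the rest ``obvious from the definitions''); you have simply written out the case analysis and the transfer of the BALFI equations along $\ast$ that the authors leave implicit. No gaps.
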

\begin{proof}
Since $\ast[A_C]$ is a subalgebra of  $\textsf{C}\A_C$,  $b \notin \ast[A_C]$ iff $\sneg b \notin \ast[A_C]$. On the other hand, $\ast$ is injective. This shows that $\neg_\ast$ and $\cons_\ast$ are well-defined. The rest of the proof is obvious from the definitions. 
\end{proof}

\begin{defn} (Canonical Structure) \label{str}
Let $\mathcal{B}_C$ be as in Definition~\ref{nma}, and let $U = CTer(\Omega_C)$. The {\em canonical structure induced by $C$} is the  structure  $\mathfrak{A}_C = \langle U, \mathcal{B}_C, I_{\mathfrak{A}_C} \rangle$ over $\Omega_C$ such that:\\

$\begin{array}{ll}
- & c^{\mathfrak{A}_C} = c, \ \mbox{ for each individual constant $c$};\\[2mm]
- & f^{\mathfrak{A}_C}: U^n \to U \ \mbox{ is such that } \ f^{\mathfrak{A}_C} (t_1, \ldots, t_n) = f(t_1, \ldots, t_n), \ \mbox{ for each}\\[2mm] 
& \mbox{$n$-ary function symbol $f$};\\[2mm]
- & P^{\mathfrak{A}_C}(t_1, \ldots, t_n) = \ast([P(t_1, \ldots, t_n)]^C), \ \mbox{ for each $n$-ary predicate symbol $P$}.
\end{array}$
\end{defn}

\begin{lemma} \label{quantOK}
Let  $\A_C$ be the Boolean algebra obtained as in Proposition~\ref{AC}, and let $U = CTer(\Omega_C)$. Then, for every formula $\psi(x)$ in  $For_1(\Omega_C)$ with (at most) a free variable $x$ it holds:\\[1mm]
(1) $[\forall x \psi]^C = \bigwedge_{\A_C} \{ [\psi[x/t]]^C \ :  \ t \in U\}$, where $\bigwedge_{\A_C}$ denotes  an existing infimum in the Boolean algebra $\mathcal{A}_C$;\\[1mm]
(2) $[\exists x \psi]^C = \bigvee_{\A_C} \{ [\psi[x/t]]^C \ :  \ t \in U\}$, where $\bigvee_{\A_C}$ denotes  an existing supremum in the Boolean algebra $\mathcal{A}_C$.
\end{lemma}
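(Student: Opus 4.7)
The plan is to verify each claim as an infimum/supremum in $\A_C$ by checking the two characteristic conditions. Recall that the Boolean order on $\A_C$ is given by $[\alpha]^C \leq [\beta]^C$ iff $\vdash^C_{\rqmbc} \alpha \to \beta$. For (1), the ``lower bound'' half is immediate from axiom $(\mathbf{Ax}\forall)$: for every $t\in CTer(\Omega_C)$, $\vdash^C_{\rqmbc} \forall x\psi \to \psi[x/t]$, so $[\forall x\psi]^C \leq [\psi[x/t]]^C$. Dually, $(\mathbf{Ax}\exists)$ supplies the ``upper bound'' half for (2).

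The substantive content is the universal property. For (1), I would assume $[\alpha]^C \leq [\psi[x/t]]^C$ for every $t \in U$ and show $[\alpha]^C \leq [\forall x\psi]^C$. Exploiting that $C$ is infinite and both $\alpha$ and $\psi$ contain only finitely many constants from $C$, pick some $c \in C$ not occurring in $\alpha$ or $\psi$. The hypothesis yields $\vdash^C_{\rqmbc} \alpha \to \psi[x/c]$. Then, choosing a fresh variable $y$ not occurring (free or bound) anywhere in the chosen derivation, uniformly substitute $y$ for $c$ throughout the derivation; since $c$ appears in neither $\alpha$ nor $\psi$, the endpoint becomes $\alpha \to \psi[x/y]$, and the substitution preserves every axiom instance and every rule application (the side conditions on $(\exists\text{-In})$ and $(\forall\text{-In})$ remain intact, by choice of $y$). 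Since $y$ is not free in $\alpha$, the rule $(\forall\text{-In})$ delivers $\vdash^C_{\rqmbc} \alpha \to \forall y\,\psi[x/y]$. A routine derivation using $(\mathbf{Ax}\forall)$ and $(\forall\text{-In})$ alone produces the variant equivalence $\vdash^C_{\rqmbc} \forall y\,\psi[x/y] \leftrightarrow \forall x\,\psi$ (apply $(\mathbf{Ax}\forall)$ with term $x$ to pass from $\forall y\,\psi[x/y]$ back to $\psi$, then $(\forall\text{-In})$; and symmetrically for the converse). Hence $[\forall y\,\psi[x/y]]^C = [\forall x\,\psi]^C$, and $[\alpha]^C \leq [\forall x\,\psi]^C$ as required.

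Part (2) is entirely dual, using $(\mathbf{Ax}\exists)$ and $(\exists\text{-In})$: supposing $[\psi[x/t]]^C \leq [\alpha]^C$ for every $t\in U$, pick $c\in C$ fresh for $\alpha,\psi$ to obtain $\vdash^C_{\rqmbc} \psi[x/c] \to \alpha$, replace $c$ uniformly by a fresh variable $y$ to get $\vdash^C_{\rqmbc} \psi[x/y] \to \alpha$, apply $(\exists\text{-In})$ (valid since $y$ is not free in $\alpha$), and conclude via the analogous variant equivalence $\exists y\,\psi[x/y] \leftrightarrow \exists x\,\psi$.

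The principal obstacle is the constants-for-variables step: one must justify that substituting a fresh variable $y$ for a constant $c$ (chosen not to appear in the endpoints) throughout a concrete derivation produces another legal derivation. This is a standard ``fresh constant'' lemma, but it requires care that (i) the rewritten instances of $(\mathbf{Ax}\forall)$ and $(\mathbf{Ax}\exists)$ remain correctly substituted instances (which holds because $y$ was chosen not to clash with any bound variable used in the derivation), and (ii) the side conditions on $(\exists\text{-In})$ and $(\forall\text{-In})$---``$x$ not free in $\psi$'' or ``$x$ not free in $\varphi$''---remain valid after the rewrite (again guaranteed by the freshness of $y$). Once this substitution lemma is in hand, the rest of the argument is the classical Henkin-style witnessing adapted to the present paraconsistent setting, which goes through unchanged because $(\Rn)$ and $(\Rb)$ are irrelevant to the quantifier part.
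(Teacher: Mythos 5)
Your proof is correct and follows essentially the same route as the paper's: the bound half via $(\mathbf{Ax}\forall)$/$(\mathbf{Ax}\exists)$, then the universal property by choosing a constant $c\in C$ fresh for the two endpoint formulas, replacing $c$ by a fresh variable throughout the derivation (the paper justifies this by appeal to the argument in Proposition~\ref{constants}), applying $(\forall\text{-In})$ (resp.\ $(\exists\text{-In})$), and finishing with the variant implication $\forall z\,\psi[x/z]\to\forall x\,\psi$. The only cosmetic difference is that you establish the full variant equivalence where the paper uses just the one needed implication.
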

\begin{proof} \ \\
(1)
By the given definitions and by the rules from \cplp, $[\alpha]^C \leq [\beta]^C$  in $\A_C$ iff $\vdash_{\rqmbc}^C \alpha \rightarrow\beta$. Let $\psi (x)$ be a formula  in  $For_1(\Omega_C)$  with (at most) a free variable $x$. Then $[\forall x \psi]^C \leq [\psi[x/t]]^C$ for every $t \in  U$, by ${\bf (Ax\forall)}$. Let $\beta$ be a formula in $For_1(\Omega_C)$ such that $[\beta]^C \leq [\psi[x/t]]^C$ for every $t \in U$. Given that $\beta$ and $\psi(x)$ are finite expressions and $C$ is infinite, there exists a constant $c$ in $C$ such that $c$ neither occurs in $\beta$ nor in $\psi(x)$ and, by hypothesis,  $[\beta]^C \leq [\psi[x/c]]^C$. That is, $\vdash_{\rqmbc}^C \beta \rightarrow\psi[x/c]$. Let $d$ be a derivation   $\varphi_1\ldots \varphi_n$ of $\beta \rightarrow\psi[x/c]$ in \rqmbc\ over the signature $\Omega_C$ (hence $\varphi_n=\beta \rightarrow\psi[x/c]$), and let $z$ be a variable which does not occur (free or bounded) in the formulas of the derivation $d$. Observe that such variable always exists, since the number of variables occurring (free or bounded) in $d$ is finite.  Let 
$\varphi'_i$ be the formula obtained from $\varphi_i$ by substituting uniformly the constant $c$ by the variable $z$, for $1 \leq i \leq n$. Observe that $\varphi'_n=\beta \rightarrow\psi[x/z]$, given that $c$ occurs neither in $\beta$ nor in $\psi(x)$.
Since $z$ does not occur in $d$, it is free for $x$ in $\psi$ and it does not occur in $\beta$. By an argument similar to the one given in the proof of Proposition~\ref{constants}, the sequence $d'$  given by  $\varphi'_1\ldots \varphi'_n$ is a derivation of $\beta \rightarrow\psi[x/z]$ in \rqmbc\ over the signature $\Omega_C$, such that $z$ does not occur free in $\beta$. By applying the rule ${\bf (\forall\mbox{\bf -In})}$, it follows that $\vdash_{\rqmbc}^C \beta \rightarrow \forall z\psi[x/z]$. But $z$ is free for $x$ in $\psi$, hence  $\vdash_{\rqmbc}^C \forall z\psi[x/z] \rightarrow \forall x\psi$ and so $\vdash_{\rqmbc}^C \beta \rightarrow \forall x\psi$. That is, $[\beta]^C \leq [\forall x\psi]^C$. This shows that  $[\forall x \psi]^C = \bigwedge_{\A_C} \{ [\psi[x/t]]^C \ :  \ t \in U\}$.\\[1mm]
(2) It is proved analogously. 
\end{proof}

\begin{prop}\label{rem2}
For every sentence $\psi$ in $Sen(\Omega_C)$, $\termvalue{\psi}^{\mathfrak{A}_C} = \ast([\psi]^C)$. 
Moreover, $\termvalue{\psi}^{\mathfrak{A}_C}=1^C \mbox{ iff } \vdash_{\rqmbc}^C \psi$.
% In particular, $\termvalue{\psi}^{\mathfrak{A}_C} =1^C \mbox{ iff } \vdash_{\rqmbc} \psi$ for every $\psi \in Sen(\Omega)$. 
\end{prop}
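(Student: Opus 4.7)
The plan is to proceed by induction on the complexity of the sentence $\psi$, treating atomic, propositional, and quantifier cases separately, with the final ``moreover'' clause falling out by citing Proposition~\ref{balfi-well}. A small but essential preliminary fact is that, in the canonical structure $\mathfrak{A}_C$, every closed term denotes itself, i.e.\ $\termvalue{t}^{\mathfrak{A}_C}=t$ for all $t\in U=CTer(\Omega_C)$. This follows by an easy induction on terms using the clauses $c^{\mathfrak{A}_C}=c$ and $f^{\mathfrak{A}_C}(t_1,\ldots,t_n)=f(t_1,\ldots,t_n)$ from Definition~\ref{str}.

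For the base case, if $\psi=P(t_1,\ldots,t_n)$ with each $t_i$ closed, then by the preliminary fact and Definition~\ref{str} we have $\termvalue{\psi}^{\mathfrak{A}_C}=P^{\mathfrak{A}_C}(t_1,\ldots,t_n)=\ast([P(t_1,\ldots,t_n)]^C)$. For the propositional connectives $\# \in\{\wedge,\vee,\to\}$, the result follows from the induction hypothesis together with the fact that $\ast\colon \A_C\to\textsf{C}\A_C$ is a Boolean homomorphism. For $\# \in\{\neg,\cons\}$, we invoke Definition~\ref{nma}: since $\termvalue{\alpha}^{\mathfrak{A}_C}=\ast([\alpha]^C)$ lies in the image $\ast[A_C]$, the clauses $\neg_\ast(\ast(a))=\ast(\neg a)$ and $\cons_\ast(\ast(a))=\ast(\cons a)$ apply, yielding $\termvalue{\#\alpha}^{\mathfrak{A}_C}=\#_\ast\ast([\alpha]^C)=\ast([\#\alpha]^C)$.

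The main obstacle is the quantifier case, since it is the only place where all the machinery has to interact at once. For a sentence $\forall x\,\alpha$, the formula $\alpha$ has at most $x$ free. Pick any assignment $\mu$; by Definition~\ref{val} we have $\termvalue{\forall x\,\alpha}^{\mathfrak{A}_C}=\bigwedge_{t\in U}\termvalue{\alpha}^{\mathfrak{A}_C}_{\mu^x_t}$, where the infimum is taken in $\textsf{C}\A_C$. By the Substitution Lemma (Theorem~\ref{substlem}) together with the preliminary fact $\termvalue{t}^{\mathfrak{A}_C}=t$, each $\termvalue{\alpha}^{\mathfrak{A}_C}_{\mu^x_t}$ equals $\termvalue{\alpha[x/t]}^{\mathfrak{A}_C}$; since $\alpha[x/t]$ is a sentence of smaller complexity than $\forall x\,\alpha$, the induction hypothesis gives $\termvalue{\alpha[x/t]}^{\mathfrak{A}_C}=\ast([\alpha[x/t]]^C)$. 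Combining these,
\[
\termvalue{\forall x\,\alpha}^{\mathfrak{A}_C} \;=\; \bigwedge_{t\in U}\ast([\alpha[x/t]]^C).
\]
Now Lemma~\ref{quantOK}(1) identifies $[\forall x\,\alpha]^C$ with the infimum $\bigwedge_{\A_C}\{[\alpha[x/t]]^C : t\in U\}$ in $\A_C$, and the defining property of the MacNeille completion is that $\ast$ preserves every existing infimum in $\A_C$. Therefore $\ast([\forall x\,\alpha]^C)=\bigwedge_{t\in U}\ast([\alpha[x/t]]^C)$, which closes the case. The $\exists$-case is dual, using Lemma~\ref{quantOK}(2) and the preservation of existing suprema.

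Finally, for the ``moreover'' part, $\termvalue{\psi}^{\mathfrak{A}_C}=1^C$ iff $\ast([\psi]^C)=1$, which by Proposition~\ref{balfi-well} is equivalent to $\vdash^C_{\rqmbc}\psi$.
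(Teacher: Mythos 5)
Your proof is correct and follows essentially the same route as the paper's: induction on complexity using $\termvalue{t}^{\mathfrak{A}_C}=t$, the Substitution Lemma, Lemma~\ref{quantOK}, the preservation of existing infima/suprema by the MacNeille completion map $\ast$, and Proposition~\ref{balfi-well} for the final clause. The only difference is presentational — you make explicit the preliminary fact about closed terms and the reason the induction hypothesis applies to $\alpha[x/t]$, both of which the paper leaves implicit.
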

\begin{proof}
The proof is done by induction on the complexity of the sentence $\psi$ in $Sen(\Omega_C)$. If $\psi=P(t_1,\ldots,t_n)$ is atomic then, by using Definition~\ref{val}, the fact that $\termvalue{t}^{\mathfrak{A}_C}=t$ for every $t \in CTer(\Omega_C)$, and Definition~\ref{str}, we have:\\[1mm]
$\termvalue{\psi}^{\mathfrak{A}_C} = P^{\mathfrak{A}_C}(\termvalue{t_1}^{\mathfrak{A}_C}, \ldots, \termvalue{t_n}^{\mathfrak{A}_C})=P^{\mathfrak{A}_C}(t_1, \ldots,t_n)=\ast([\psi]^C)$.\\[1mm]
If $\psi = \# \beta$ for $\# \in \{\neg, \cons\}$ then, by Definitions~\ref{val} and~\ref{nma} and by induction hypothesis,
$$\termvalue{\psi}^{\mathfrak{A}_C} \,=\, \#_\ast\termvalue{\beta}^{\mathfrak{A}_C} \,=\, \#_\ast(\ast([\beta]^C)) \,=\, \ast([\#\beta]^C).$$
If $\psi = \alpha \# \beta$ for $\# \in \{\wedge, \vee, \to\}$, the proof is analogous.\\[1mm]
If $\psi = \forall x \beta$ then, by Lemma~\ref{quantOK} and using that $U=CTer(\Omega_C)$,
$[\forall x \beta]^C = \bigwedge_{\A_C} \{ [\beta[x/t]]^C \ :  \ t \in U\}$ and so $\ast([\forall x \beta]^C) = \bigwedge_{\textsf{C}\A_C} \{ \ast([\beta[x/t]]^C) \ :  \ t \in U\}$. Let $\mu$ be an assignment over $\mathfrak{A}_C$. Since $\termvalue{t}^{\mathfrak{A}_C}_\mu=t$ for every $t \in U$ it follows, by Theorem~\ref{substlem} and by induction hypothesis, that $\termvalue{\beta}^{\mathfrak{A}_C}_{\mu^x_t} = \termvalue{\beta[x/t]}^{\mathfrak{A}_C}_\mu= \termvalue{\beta[x/t]}^{\mathfrak{A}_C}=\ast([\beta[x/t]]^C)$.
Then, by Definition~\ref{val}: 
$$
\termvalue{\forall x \beta}^{\mathfrak{A}_C} = \termvalue{\forall x \beta}^{\mathfrak{A}_C}_\mu = \bigwedge_{t \in U} \termvalue{\beta}^{\mathfrak{A}_C}_{\mu^x_t} = \bigwedge_{t \in U} \ast([\beta[x/t]]^C) = {\ast}([\forall x\beta]^C).$$

If $\psi = \exists x \beta$, the proof is analogous to the previous case. 

This shows that $\termvalue{\psi}^{\mathfrak{A}_C} =\ast([\psi]^C)$ for every sentence $\psi$. The rest of the proof follows by Proposition~\ref{balfi-well}.
\end{proof} 

	\begin{teom} [Completeness  of \rqmbc\ w.r.t. BALFI semantics] \ \\ \label{comple-rqmbc}
		For every $\Gamma \cup \{\varphi\} \subseteq$ \sent: if $\Gamma \models_{\rqmbc} \varphi$ then $\Gamma \vdash_{\rqmbc} \varphi$. 
	\end{teom}
	\begin{proof} 
		Let $\mathfrak{A}_C$ be as in Definition~\ref{str}, and let $\mathfrak{A}'$ be the reduct of $\mathfrak{A}_C$ to $\Omega$.\footnote{That is, $\mathfrak{A}'$ coincides with $\mathfrak{A}_C$, with the exception that it `forgets' to interpret the constants in $C$. Hence, $\mathfrak{A}'$ is a structure over $\Omega$.}
		Let $\Gamma \cup \{\varphi\} \subseteq$ \sent\ such that $\Gamma \models_{\rqmbc} \varphi$. 
		By Definition~\ref{consrel0},  we have two cases to analyze:\\[1mm]
		(1) $\varphi$ is  valid in \rqmbc. Then, $\varphi$ is valid in  $\mathfrak{A}'$.  It is routine to prove that, if $\psi \in For_1(\Omega)$ and $\mu \in \textsc{A}(\mathfrak{A}')=\textsc{A}(\mathfrak{A}_C)$, then $\termvalue{\psi}^{\mathfrak{A}'}_\mu=\termvalue{\psi}^{\mathfrak{A}_C}_\mu$. From this, $\varphi$  is valid in $\mathfrak{A}_C$. By Definition~\ref{consrel0}, $\termvalue{\varphi}^{\mathfrak{A}_C} =1^C$. Hence, $\vdash_{\rqmbc} \varphi$, by Propositions~\ref{rem2} and~\ref{constants}. Therefore,   $\Gamma\vdash_{\rqmbc} \varphi$, by definition of derivations from premises  in \rqmbc.\\[1mm]
		(2) There exists a finite, non-empty subset $\{\gamma_1,\ldots,\gamma_n\}$ of $\Gamma$ such that the closed formula $\psi=(\gamma_1 \wedge \gamma_2 \wedge \ldots\wedge \gamma_n) \to \varphi$ over $\Omega$ is  valid in \rqmbc. In particular, $\psi$ is valid in $\mathfrak{A}'$ and so, as observed in~(1), it is valid in $\mathfrak{A}_C$.
		But then, by item~(1), we have that $\vdash_{\rqmbc} \psi$, which implies that  $\Gamma\vdash_{\rqmbc} \varphi$, by definition of derivations from premises  in \rqmbc.
		
		This completes the proof.
	\end{proof}

\begin{rem}
The completeness result for \rqmbc\ w.r.t. BALFI semantics was obtained just for sentences, and not for formulas possibly containing free variables (as it was done with the soundness Theorem~\ref{adeq-rqmnc-BALFI}). This can be easily overcome. Recall that the {\em universal closure} of a formula $\psi$ in $For_1(\Omega)$, denoted by  $(\forall)\psi$, is defined as follows: if $\psi$ is a sentence then  $(\forall)\psi \defin \psi$; and if $\psi$ has exactly the variables $x_1,\ldots,x_n$ occurring free then  $(\forall)\psi \defin \forall x_1\cdots\forall x_n\psi$. If $\Gamma$ is a set of formulas in $For_1(\Omega)$ then  $(\forall)\Gamma \defin \{(\forall)\psi  \ : \  \psi \in \Gamma\}$. It is easy to show that,   for every $\Gamma \cup \{\varphi\} \subseteq For_1(\Omega)$:  (i)~$\Gamma\vdash_{\rqmbc} \varphi$ \ iff \   $(\forall)\Gamma\vdash_{\rqmbc} (\forall)\varphi$; and (ii)~$\Gamma \models_{\rqmbc} \varphi$ \ iff \   $(\forall)\Gamma \models_{\rqmbc} (\forall)\varphi$. From this, a general completeness for \rqmbc\ result follows from Theorem~\ref{comple-rqmbc}.
\end{rem}

\section{Conclusion, and  significance of the results}
 
 This paper offers a solution to  two open problems  in the domain of paraconsistency, in particular connected  to algebraization of  \lfis.  The quest for the  algebraic counterpart of paraconsistency is more than 50 years old: since the outset of da Costa's paraconsistent calculi,  algebraic   equivalents  for such systems  have been sought, with different degrees of success (and  failure).  
% Our results suggest that the new concepts and  methods proposed  in the present paper have good potential of usefulness, in special the neighborhood style semantics connected to BALFIs. 
Our results suggest that the new concepts and  methods proposed  in the present paper, in particular the neighborhood-style semantics introduced for BALFIs, have a good potential for applications. As suggested  in~\cite{marcos:2004},  modal logics could alternatively be regarded as the study of  a kind of  modal-like contradiction-tolerant systems.  In alternative to founding  modal semantics  in  terms of belief, knowledge, tense, etc., modal logic could be regarded as a  general `theory of opposition',  more akin to the Aristotelian tradition.
 
Applications of paraconsistent logics in  computer science, probability and AI, just to mention a few areas, are greatly advanced when more traditional  algebraic tools pertaining to extensions  of Boolean algebras and  neighborhood semantics, are used to express the underlying ideas of paraconsistency. In addition, 
%deontic logic and normative systems, where non-normal modal logics and neighborhood semantics play a big role, could also be benefited by considering paraconsistent versions of these systems by means of our approach.}
many logical systems employed in deontic logic and normative reasoning, where non-normal modal logics and neighborhood semantics play an important role, could be extended by means of our approach.
Hopefully, our results  may unlock new research in this direction.  Finally, BALFI semantics for \lfis\ opens the possibility of obtaining new algebraic models for paraconsistent set theory  \citep{CC13,CC19} by generalizing the well-known Boolean-valued models for ZF \citep{bell}.

\

\

 \noindent {\bf Acknowledgements:} 
The first and second authors acknowledge support from  the  National Council for Scientific and Technological Development (CNPq), Brazil
under research grants 307376/2018-4 and 306530/2019-8, respectively.
The third author acknowledges support from the Luxembourg National Research Fund (FNR), grant CORE AuReLeE - Automated Reasoning with Legal Entities  (C20/IS/14616644).

\newpage

\addcontentsline{toc}{section}{References}
\bibliographystyle{chicago}

\end{document}